\theoremstyle{plain}
\newtheorem{theorem}{Theorem }
\newtheorem{lemma}[theorem]{Lemma}
\newtheorem{proposition}[theorem]{Proposition}
\newtheorem{definition}[theorem]{Definition}
\newtheorem{corollary}[theorem]{Corollary}
\newcommand{\Z}{{\mathbb Z}}
\newcommand{\R}{{\mathbb R}}
\newcommand{\N}{{\mathbb N}}
\newcommand{\T}{{\mathbb T}}
\newcommand{\C}{{\mathbb C}}
\title{Small-time approximate controllability of bilinear Schrödinger equations and diffeomorphisms}
\begin{document}
\author{Karine Beauchard\footnote{Univ Rennes, CNRS, IRMAR - UMR 6625, F-35000 Rennes, France (karine.beauchard@ens-rennes.fr)},\quad Eugenio Pozzoli\footnote{Univ Rennes, CNRS, IRMAR - UMR 6625, F-35000 Rennes, France (eugenio.pozzoli@univ-rennes.fr)}}
\maketitle

\abstract{%Recent advances on geometric control theory have shown that arbitrary $L^2$ local phases can be approximately controlled in small time for certain physically relevant Schrödinger PDEs on $\T^d$ and $\R^d$ \cite{duca-nersesyan,duca-pozzoli}. In this paper we build on such results and show that, under the same hypothesis, compositions with large families of diffeomorphisms (more precisely, the ones that can be written as a product of gradient flows) can be approximately controlled in small time, in $L^2$. In particular, in the one-dimensional case $d=1$, all compactly supported and isotopic to the identity diffeomorphisms can be obtained. 
We consider Schrödinger PDEs, posed on a boundaryless Riemannian manifold $M$, with bilinear control. We propose a new method to prove the global $L^2$-approximate controllability. Contrarily to previous ones, it works in arbitrarily small time and does not require a discrete spectrum.

%This approach consists in combining the control of the phase and the control of the group ${\rm Diff}_c^0(M)$ of the diffeomorphisms (isotopic to the identity and with compact support) of the underlying manifold $M$; they refer, respectively, to the possibility, for any initial state $\psi_0\in L^2(M,\C)$, phase $\varphi\in L^2(M,\R)$ and diffeomorphism $P\in {\rm Diff}_c^0(M)$,to reach approximately the states $e^{i \phi}\psi_0 $ and $|J_P|^{1/2}(\psi_0\circ P)$ (here $|J_P|$ denotes the determinant of the Jacobian of $P$). 
%We use them, respectively, to reach the expected angular and radial part of the wavefunction. The possibility of controlling the radial part with the group of diffeomorphisms is a consequence of a result of Moser on the transitivity of the action of ${\rm Diff}_c^0(M)$ on $L^2(M,(0,\infty))$ \kb{plutôt $C^{\infty}_c(M,(0,\infty))$ ou "the set of positive densities"}.

This approach consists in controlling separately the radial part and the angular part of the wavefunction thanks to the control of the group ${\rm Diff}_c^0(M)$ of diffeomorphisms %(isotopic to the identity and with compact support) 
of $M$ and the control of phases, which refer to the possibility, for any initial state $\psi_0\in L^2(M,\C)$, diffeomorphism $P\in {\rm Diff}_c^0(M)$ and phase $\varphi \in L^2(M,\R)$ to reach approximately the states $(\det DP)^{1/2}(\psi_0\circ P)$ and $e^{i \varphi}\psi_0 $.
The control of the radial part uses the transitivity of the group action of ${\rm Diff}_c^0(M)$ on positive densities proved by Moser \cite{moser}.

We develop this approach on two examples of Schr\"odinger equations, posed on $\T^d$ or $\R^d$, for which the small-time control of phases was recently proved.
We prove that it implies the small-time control of flows of vector fields thanks to Lie bracket techniques.
Combining this property with the simplicity of the group ${\rm Diff}_c^0(M)$ proved by Thurston \cite{thurston}, we obtain the control of the group ${\rm Diff}_c^0(M)$.

% and show that the small-time approximate controllability problem for these Schrödinger PDEs posed on some boundaryless manifolds $M$ can be related to the controllability problem in the diffeomorphisms group of $M$.

%The main idea is that we can approximate the small-time behaviour of the solution of some controlled Schr\"odinger equations, with the solution of a transport equation along an arbitrary gradient vector field.

 %(more precisely, the ones that can be written as a product of gradient flows) 

% We then further apply these ideas and provide physically relevant examples of bilinear Schrödinger equations which are globally approximately controllable in arbitrarily small time.}

%%%%%%%%%%%%%%%%%%%%%%%%%%%%%%%%%%%%%%%%%%%%%%%%%%%%
\vspace{0.2cm}
\textbf{Keywords:} Schr\"odinger equation; controllability; group of diffeomorphisms
\vspace{0.2cm}

\textbf{MSC Classification:} 35J10; 93C20; 58D05

\section{Introduction}

%---------------------------------------------------
\subsection{Models}

We consider Schrödinger equations of the form
\begin{equation}\label{eq:schro}
\begin{cases}
i\partial_t\psi(t,x)=\left(-\Delta+V(x)+\sum_{j=1}^m u_j(t)W_j(x)\right)\psi(t,x), & (t,x) \in (0,T) \times M, \\
\psi(0,\cdot)=\psi_0,
\end{cases}
\end{equation}
where 
$M$ is a smooth connected boundaryless Riemannian manifold,
$\Delta$ is the Laplace-Beltrami operator of $M$,
the functions $V, W_1,\dots,W_m: M \to \R$ are real valued potentials,
and the functions $u_1,\dots,u_m:(0,T) \to \R$ are real valued controls. The time-independent part $-\Delta+V$ is usually referred to as \emph{the drift}. The time-dependent potential $\sum_{j=1}^m u_j(t)W_j(x)$ is possibly unbounded on $L^2(M,\C)$. For a time-dependent function 
$u=(u_1,\dots,u_m)$ and an initial state $\psi_0$
in the unitary sphere 
\begin{equation} \label{def:S}
\mathcal{S}:=\{\psi\in L^2(M,\C)\, ;\, \|\psi\|_{L^2(M)}=1\}.
\end{equation}
then $\psi(t;u,\psi_0)$ denotes - when it is well defined - the solution of \eqref{eq:schro}.

System \eqref{eq:schro} describes the dynamics of a quantum particle on the manifold $M$, with free (kinetic plus potential) energy $-\Delta+V$, in interaction with additional external fields with potentials $W_j$ that can be switched on and off. It is used to model a variety of physical situations, such as atoms in optical cavities \cite{haroche}, and molecular dynamics \cite{rabitz}. In this article, we study in particular two examples of equations of the form \eqref{eq:schro}. 

\paragraph{An equation posed on $M=\T^d=\R^d/2\pi\Z^d$.}

The first example is the equation 
\begin{equation}\label{eq:torus}
\begin{cases}
i\partial_t\psi(t,x)=\Big(-\Delta+V(x
)+\sum\limits_{j=1}^d ( u_{2j-1}(t)\sin +u_{2j}(t)\cos)\langle b_j , x\rangle \Big)\psi(t,x),(t,x)\in(0,T)\times\T^d,\\
\psi(0,\cdot)=\psi_0, &
\end{cases}
\end{equation}
where 
%$(A_1,\dots,A_d)\in C^1(\T^d,\R^d), 
$V\in L^\infty(\T^d,\R)$, and 
\begin{equation} \label{Def:ej}
b_1=(1,0,\dots,0),\quad b_2=(0,1,\dots,0),\quad \dots, \quad b_{d-1}=(0,\dots,1,0),\quad b_d=(1,\dots,1).
\end{equation}
In the one-dimensional case $d=1$, it describes the orientation in the plane of a rigid molecule, controlled by the dipolar interactions with two electric fields of constant orthogonal directions and time-variable amplitudes. It is widely used in physics and chemistry as a model for rotational molecular dynamics (see, e.g., the recent review \cite{koch} and the references therein). 
%Recently, small-time approximate controllability properties of \eqref{eq:torus} were obtained in \cite{duca-nersesyan,coron-xiang-zhang}. 

%- - - - - - - - - - - - - - - - - - - -
\paragraph{An equation posed on $M=\R^d$.}

The second example is the equation 
\begin{equation}\label{eq:line}
\begin{cases}
i\partial_t\psi(t,x)=\left( -\Delta+V(x)+\sum\limits_{j=1}^d u_{j}(t)x_j+u_{d+1}(t)e^{-|x|^2/2}  \right)\psi(t,x),& (t,x) \in (0,T) \times \R^d, \\
\psi(0,\cdot)=\psi_0, &  
\end{cases}
\end{equation}
where 
\begin{equation} \label{Hyp:V_transp}
  V\in L^2_{\rm loc}(\R^d,\R) \quad \text{ and } \quad
  \exists a,b\geq 0,\, \forall x \in \R^d,\, V(x)\geq -a |x|^2-b
\end{equation}
It models the dipolar interaction of a quantum particle with controls coupling to its positions $x_j$, and an additional control concentrated around the origin as a Gaussian function. Notice that such system, when e.g. $V=0$, has purely continuous spectrum. 
%Recently, small-time approximate controllability properties of (\ref{eq:line}) were obtained in \cite{duca-pozzoli}. 

\medskip

In this article, we use piecewise constant controls $u$, then the solutions of (\ref{eq:torus}) and (\ref{eq:line}) are well defined and $\psi(.;u,\psi_0) \in C^0([0,T],\mathcal{S})$ (see Appendix \ref{App:WP}).
This is where the assumption (\ref{Hyp:V_transp}) comes in.

%---------------------------------------------------
\subsection{Controllability problems, litterature review}

The mathematical control theory of bilinear Schrödinger PDEs as (\ref{eq:schro}) has undergone a vast development in the last two decades. Such theoretical problems find their origins in applications of quantum control to physics and chemistry (e.g. absorption spectroscopy) \cite{glaser}, or computer science (e.g. quantum computation) \cite{preskill}.

\medskip

The wavefunction is defined up to global phases (the state $e^{i\theta}\psi_1$ for some constant $\theta\in \R$, is physically the same as $\psi_1$) thus we can adopt the following definition.

\begin{definition}[Exact/Approximate controllability]
We say that \eqref{eq:schro} is 
exactly controllable in $L^2$ 
(resp. $L^2$-approximately controllable)
if, for every 
$\psi_0, \psi_1 \in \mathcal{S}$ 
(resp. $\varepsilon >0$), 
there exist 
a time $T>0$,
a global phase $\theta \in [0,2\pi)$
and a 
control $u:[0, T ]\to \R^m$ such that 
$\psi( T;u,\psi_0)= \psi_1 e^{i\theta}$
(resp. $\| \psi( T;u,\psi_0) - \psi_1 e^{i\theta} \|_{\mathcal{H}} <\epsilon$).
\end{definition}

By the seminal work \cite{BMS}, 
if the drift $i(\Delta-V)$ generates a group of bounded operators on an Hilbert space $\mathcal{H}$, on which the control operators $W_j$ are bounded, then
the equation (\ref{eq:schro}) is not exactly controllable in $\mathcal{H}$, because the reachable set has empty interior in $\mathcal{S} \cap \mathcal{H}$ (see also \cite{Chambrion-Caponigro-Boussaid-2020,chambrion-laurent,chambrion-laurent2} for recent developments). This obstruction holds e.g. for the system \eqref{eq:torus}, in any $H^s(\T^d,\C)$, $s\in\N$, with controls in $L^1_{loc}(\R,\R^m)$. 
Therefore different notions of controllability have been investigated. 

\medskip

On the one hand, the exact controllability of (\ref{eq:schro}) was investigated in more regular spaces (i.e. on which the $W_j$ are not bounded). For instance, exact controllability results were proved for 1D-Schrödinger equations with Dirichlet boundary conditions in \cite{beauchard1,beauchard-coron,beauchard-laurent,nersesyan-nersisyan,morancey-polarizability,bournissou}, essentially thanks to linearization techniques.

\medskip

On the other hand, approximate controllability was studied \cite{BCMS,nersesyan,ervedoza}. In particular, if the drift Hamiltonian $-\Delta+V$ has compact resolvent (hence purely point spectrum) then \eqref{eq:schro} is globally $L^2$-approximately controllable in large times with one control ($m=1$), generically w.r.t. the potentials $V,W_1$ \cite{MS-generic}. This holds e.g. for the system \eqref{eq:torus} with $d=1$ and $V=0$ \cite{BCCS}. The proof consists in applying methods from the control theory of ODEs to the Galerkin approximations (i.e., the projections of the system onto finite dimensional eigenspaces of the Hamiltonian) and estimating the error between them and the solution of the PDE.

This strategy does not work when the Hamiltonian does not admit a Hilbert basis of eigenfunctions or presents continuous regions in the spectrum (see \cite{mirrahimi,chambrion,beauchard-coron-rouchon} for different techniques and partial results, e.g. large-time approximate control between bound states). In particular, for system (\ref{eq:line}), even the large-time approximate controllability was not known, due to the purely continuous character of its spectrum, which prevents the applications of the previous techniques.

Moreover, as the previous strategy is based on resonant averaging, it does not work when the control time is small, as the time expected to average out unwilling contributions is in general large.

%---------------------------------------------------
\subsection{The small-time challenge, our main result}

The small-time controllability refers to the possibility of controlling the equation in almost time zero. 

\begin{definition}
%[Small time $L^2$-approximate controllability] 
\label{def:STAC}
We say that \eqref{eq:schro} is \textbf{small-time $L^2$-approximately controllable} ($L^2$-STAC)  if, for every 
$\psi_0, \psi_1\in \mathcal{S}$ and
$\varepsilon >0$, there exist 
a time $T \in[0,\varepsilon]$,
a global phase $\theta \in [0,2\pi)$
and %a locally integrable 
a control $u:[0, T ]\to \R^m$ such that 
%the Cauchy problem \eqref{eq:schro} 
%has a unique solution $\psi \in C^0([0,T],L^2(M))$, and
$\|\psi( T;u,\psi_0)-e^{i\theta} \psi_1\|_{\mathcal{H}}<\varepsilon$.
\end{definition}

Small-time controllability has particularly relevant physical implications, both from a fundamental viewpoint and for technological applications. As a matter of fact, quantum systems, once engineered, suffer of very short lifespan before decaying (e.g., through spontaneous photon emissions) and loosing their non-classical properties (such as superposition). The capability of controlling them in a minimal time is in fact an open challenge also in physics (see, e.g., the pioneering work \cite{khaneja-brockett-glaser} on the minimal control-time for spin systems).

There are however examples of Schrödinger equations of the form (\ref{eq:schro}) which are $L^2$-approximately controllable in large times, but not in small times. This obstruction happens e.g. when $M=\R^d$ in the presence of (sub)quadratic potentials, because Gaussian states are preserved, at least for small times \cite{beauchard-coron-teismann,beauchard-coron-teismann2} (see also \cite{obstruction-ivan} for different semi-classical obstructions). 

Very recently, the first examples of small-time approximately controllable equations (\ref{eq:schro}) were given in \cite{beauchard-pozzoli} by the authors: they correspond to 
$M=\R^d$, 
multi-input $m=2$, 
$W_1(x)=|x|^2$
and generic $W_2 \in L^{\infty}(\R^d)$. 
The control on the frequency of the quadratic potential $W_1$ permits to construct solutions that evolve approximately along specific diffeomorphisms, namely, space-dilations. Once we have access to space-dilations, we can exploit the scaling of the equation posed on $\R^d$ (with $u_2=0$) to generate time-contractions. In this way, we build on previous results of large-time control, to obtain small-time control. 
However such a strategy cannot be used on equations without scaling, e.g. (\ref{eq:torus}), or equations with continuous spectrum, e.g. \eqref{eq:line}, where previous results of large-time control do not apply.

\medskip

Determining the configurations $(M,V,W_1,\dots,W_m)$ for which approximate controllability of (\ref{eq:schro}) occurs in small time is thus an ambitious open problem. %It requires finding new strategies to prove small-time approximate controllability. 

\medskip

For (nonlinear versions of) systems \eqref{eq:torus} and \eqref{eq:line}, a weaker property was proved:
the small-time controllability of phases \cite{duca-nersesyan,duca-pozzoli}. 
It refers to the possibility, for any initial condition $\psi_0 \in \mathcal{S}$ and phase $\varphi \in L^2(M,\R)$ to reach approximately, in arbitrary small time, the state $\psi_0(x) e^{i \varphi(x)}$. 
%(see Propositions \ref{Prop:DN} and \ref{Prop:DP}). 
The proof relies on Lie bracket techniques and a saturation argument, introduced in the pioneering articles \cite{agrachev-sarychev,agrachev2}, also used in \cite{coron-xiang-zhang,pozzoli,duca-pozzoli-urbani}. The small-time controllability of phases is a consequence of the density in $L^2(M,\R)$ of a particular subspace of ${\rm Lie}\{\Delta-V,W_1,\dots,W_m\}$, 
shared by both systems \eqref{eq:torus} and \eqref{eq:line}. 
For appropriate systems, it implies the small-time approximate controllability between particular eigenstates
\cite{Boscain-2024,chambrion-pozzoli}.

\medskip

In this article, building on the small-time controllability of phases, we propose a new method to prove the small-time $L^2$-approximate controllability of equations of the form \eqref{eq:schro}. We apply it to systems (\ref{eq:torus}) and (\ref{eq:line}) and get the following results.

\begin{theorem}\label{Main_Thm_torus}
If $V \in L^\infty(\T^d,\R)$ then system \eqref{eq:torus} is small-time $L^2$-approximately controllable.
\end{theorem}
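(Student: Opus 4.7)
The plan follows the general strategy announced in the abstract: decompose the problem into control of the modulus (radial part) and control of the phase (angular part) of the wavefunction, and realize each separately in arbitrarily small time.

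First, by a density argument, it suffices to consider target states $\psi_0,\psi_1 \in \mathcal{S}$ that are smooth with strictly positive modulus. Writing $\psi_j=\rho_j e^{i\varphi_j}$ with $\rho_j \in C^\infty(\T^d,(0,\infty))$ satisfying $\int_{\T^d}\rho_j^2\,dx=1$ and $\varphi_j \in C^\infty(\T^d,\R)$, Moser's theorem provides $P\in{\rm Diff}_c^0(\T^d)$ with $(\det DP)(\rho_0^2\circ P)=\rho_1^2$. Consequently the unitary $\psi \mapsto (\det DP)^{1/2}(\psi\circ P)$ sends $\psi_0$ to $\rho_1 e^{i\varphi_0\circ P}$, i.e.\ to a function whose modulus is already the desired one.

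Next, using the small-time control of ${\rm Diff}_c^0(\T^d)$ on \eqref{eq:torus}---which the paper establishes from small-time phase control, via Lie-bracket generation of flows of vector fields and Thurston's simplicity of ${\rm Diff}_c^0(\T^d)$---I would drive $\psi_0$ in time at most $\varepsilon/2$ to an $L^2$-approximation of $\rho_1 e^{i\varphi_0\circ P}$ within error $\varepsilon/4$. Then, invoking the small-time control of phases for \eqref{eq:torus}, I would multiply the resulting state by $e^{i(\varphi_1-\varphi_0\circ P)}$ in additional time at most $\varepsilon/2$, up to $L^2$-error $\varepsilon/4$. Since phase multiplication is an $L^2$-isometry and the pointwise target of the phase step is precisely $\rho_1 e^{i\varphi_1}=\psi_1$, the two errors add and the final state lies within $\varepsilon$ of $\psi_1$.

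The main obstacle will not be the three-step assembly above but the validity of its two black-box ingredients. The small-time phase control is inherited from earlier work. The novel and delicate step is the small-time control of ${\rm Diff}_c^0(\T^d)$ itself: one must first promote phase control into the control of flows of suitably chosen vector fields by Lie-bracket/saturation arguments, and then exploit Thurston's simplicity theorem to show that the subgroup of diffeomorphisms approximately reachable in this way is all of ${\rm Diff}_c^0(\T^d)$. A secondary technical issue is the quantitative propagation of errors through the density reduction and the three approximate steps, which requires continuity estimates for phase multiplication and for the transport operator $\psi\mapsto(\det DP)^{1/2}(\psi\circ P)$ on $L^2(\T^d,\C)$.
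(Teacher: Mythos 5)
Your proposal is correct and follows essentially the same strategy as the paper: reduce by density to states with smooth strictly positive modulus, invoke Moser's theorem to produce $P\in{\rm Diff}_c^0(\T^d)$ matching the moduli, realize $\mathcal{L}_P$ as $L^2$-STAR via the Lie-bracket generation of flows from phase control combined with Thurston's simplicity of ${\rm Diff}_c^0(\T^d)$, and finish with a small-time phase correction, all composed using the semigroup property of $L^2$-STAR operators. Two minor remarks: the paper's density lemma only provides phases $\varphi_j\in L^2(\T^d,\R)$ rather than $C^\infty$ (which is neither needed for the phase control nor always available on $\T^d$, since a smooth nonvanishing function may have nontrivial winding and thus no globally defined smooth argument), and the paper writes the operator as the three-factor composition $e^{i\varphi_1}\mathcal{L}_P e^{-i\varphi_0}$, equivalent to your two-step version $e^{i(\varphi_1-\varphi_0\circ P)}\mathcal{L}_P$.
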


\begin{theorem}\label{thm:global-euclidean}
If $V$ satisfies (\ref{Hyp:V_transp}) then system \eqref{eq:line} is small-time $L^2$-approximately controllable.
\end{theorem}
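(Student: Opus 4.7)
The strategy is a polar-decomposition argument splitting the target into a modulus and a phase, and controlling each separately. Given $\psi_0,\psi_1\in\mathcal{S}$ and $\varepsilon>0$, I would first reduce, by density in $\mathcal{S}$, to a smooth compactly supported target $\psi_1$ with strictly positive modulus, and write $\psi_1=\rho_1 e^{i\varphi_1}$. The goal then splits into two independent small-time approximation statements: (i) drive $\psi_0$ to an intermediate state $\tilde\psi$ with $|\tilde\psi|\approx\rho_1$; (ii) adjust the argument of $\tilde\psi$ to match $\varphi_1$. Step (ii) is precisely the small-time control of phases for \eqref{eq:line}, already proved in \cite{duca-pozzoli}. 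Composing (i) and (ii), and absorbing the unavoidable scalar into a global phase $\theta$, yields the conclusion.

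For step (i), the plan is to realize $\tilde\psi$ as the unitary diffeomorphism-action $\tilde\psi=(\det DP)^{1/2}(\psi_0\circ P)$ for some $P\in\mathrm{Diff}_c^0(\R^d)$. Since $|\tilde\psi|^2$ is the pullback density $(\det DP)\,|\psi_0\circ P|^2$, after approximating $|\psi_0|^2$ and $\rho_1^2$ by smooth strictly positive densities of equal (unit) mass, Moser's theorem \cite{moser} produces a compactly supported diffeomorphism $P$ with $|(\det DP)^{1/2}(\psi_0\circ P)|^2=\rho_1^2$. It then suffices to show that the action of any $P\in\mathrm{Diff}_c^0(\R^d)$ on an initial datum can be realized in arbitrarily small time by the Schrödinger evolution \eqref{eq:line}: this is the small-time control of the group $\mathrm{Diff}_c^0(\R^d)$ announced in the abstract.

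The heart of the proof is this control of $\mathrm{Diff}_c^0(\R^d)$. Starting from the small-time controllability of phases, commuting the drift $-\Delta$ with multiplication by $i\varphi$ yields $[-\Delta,i\varphi]=-i(\Delta\varphi)-2i\nabla\varphi\cdot\nabla$, whose principal part is the transport vector field $\nabla\varphi$. Trotter-type small-time approximations in the spirit of \cite{agrachev-sarychev} should then upgrade the control of phases to the small-time approximate control of the flow of any smooth compactly supported gradient vector field on $\R^d$; iterated Lie brackets of such flows span a dense subset of smooth compactly supported vector fields. The subgroup $G\subset\mathrm{Diff}_c^0(\R^d)$ reachable in arbitrarily small time is then non-trivial and, by construction, normal (conjugation by a diffeomorphism $Q$ corresponds to replacing a vector field $X$ by $Q_*X$, which is again reachable). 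Thurston's simplicity theorem \cite{thurston} forces $G=\mathrm{Diff}_c^0(\R^d)$.

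The step I expect to be the main obstacle is the passage from small-time control of phases to small-time approximate control of gradient-vector-field flows: it requires sharp Trotter-type estimates in $L^2(\R^d,\C)$ for compositions of unitary groups with unbounded generators, and a precise asymptotic analysis of $e^{is\Delta}e^{i\varphi}e^{-is\Delta}$ as $s\to 0$ to extract the transport flow of $\nabla\varphi$. A secondary technical difficulty, specific to $\R^d$, is the non-compactness of both the manifold and the states: truncation, smoothing, and propagation estimates (relying on the subquadratic assumption \eqref{Hyp:V_transp} on $V$ to keep solutions in $\mathcal{S}$ and to approximate $\psi_0\circ P$ by compactly supported data) are needed to remain in the compactly supported diffeomorphism setting of Moser and Thurston.
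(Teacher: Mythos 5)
Your strategy matches the paper's step for step: reduce via density and the polar decomposition (Moser) to controlling modulus and phase separately, invoke the small-time control of phases from \cite{duca-pozzoli}, upgrade it to small-time control of gradient-field flows via the conjugation identity and Trotter--Kato, close the Lie algebra of reachable flows under brackets to get density in ${\rm Vec}_c(\R^d)$, and finish with Thurston's simplicity of ${\rm Diff}_c^0(\R^d)$. The one small imprecision is that Thurston should be applied to the subgroup $F(M)$ generated by time-$1$ flows of compactly supported vector fields (whose normality follows immediately from the pushforward identity you mention, independently of any reachability considerations), rather than directly to the ``reachable subgroup'': once $F(M)={\rm Diff}_c^0(\R^d)$, every element decomposes into flows whose actions are already $L^2$-STAR, and the semigroup property of STAR operators concludes.
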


In Section \ref{sec:Strategy}, we develop the proof strategy for Theorems  \ref{Main_Thm_torus} and  \ref{thm:global-euclidean}. The structure of the rest of this article is presented at the end of this section.

%---------------------------------------------------
\section{Proof strategy} \label{sec:Strategy}
\subsection{Some useful notions}

We use \textbf{small-time $L^2$-approximately reachable operators} to describe  states that can be achieved by trajectories of (\ref{eq:schro}) in arbitrarily small time.

\begin{definition}[$L^2$-STAR operators] \label{def:L2STAR}
A unitary operator $L$ on $L^2(M,\C)$ is $L^2$-STAR  if, for every $\psi_0 \in \mathcal{S}$ and $\epsilon>0$, there exist $T \in [0,\epsilon]$, $\theta \in [0,2\pi)$ and $u \in PWC(0,T)$ such that $\| \psi(T;u,\psi_0) - e^{i\theta} L \psi_0 \|_{L^2} < \epsilon$.
\end{definition}

The set of $L^2$-STAR operators forms a subsemigroup closed for the topology of the strong convergence (See Appendix \ref{subsec:STAR_op} for a proof).

\begin{lemma}\label{lem:reachable-operators}
The composition and the strong limit of $L^2$-STAR operators are $L^2$-STAR operators.
\end{lemma}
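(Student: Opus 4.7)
The plan is to handle the two claims separately, using the semigroup (concatenation) property of the Schrödinger propagator together with its linearity and unitarity on $L^2$.

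\emph{Composition.} Fix $L_1,L_2$ which are $L^2$-STAR, $\psi_0\in\mathcal{S}$ and $\epsilon>0$. Applying Definition~\ref{def:L2STAR} to $L_2$, the initial state $\psi_0$ and tolerance $\epsilon/2$ yields $T_2\in[0,\epsilon/2]$, $\theta_2\in[0,2\pi)$ and $u_2\in PWC(0,T_2)$ with
\[
\|\psi(T_2;u_2,\psi_0)-e^{i\theta_2}L_2\psi_0\|_{L^2}<\epsilon/2.
\]
Applying Definition~\ref{def:L2STAR} to $L_1$, the initial state $L_2\psi_0\in\mathcal{S}$ and tolerance $\epsilon/2$ yields $T_1\in[0,\epsilon/2]$, $\theta_1\in[0,2\pi)$ and $u_1\in PWC(0,T_1)$ with
\[
\|\psi(T_1;u_1,L_2\psi_0)-e^{i\theta_1}L_1L_2\psi_0\|_{L^2}<\epsilon/2.
\]
Concatenate the controls into $u\in PWC(0,T_1+T_2)$ (first $u_2$, then $u_1$). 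By the semigroup property of the propagator $U(\cdot;\cdot)$ of \eqref{eq:schro} (which is a linear isometry on $L^2$ for piecewise constant controls, thanks to Appendix~\ref{App:WP}), one has $\psi(T_1+T_2;u,\psi_0)=U(T_1;u_1)\psi(T_2;u_2,\psi_0)$. Using linearity of $U(T_1;u_1)$, writing $\phi_2:=\psi(T_2;u_2,\psi_0)$ and adding and subtracting $e^{i\theta_2}U(T_1;u_1)L_2\psi_0$ gives
\[
\psi(T_1+T_2;u,\psi_0)-e^{i(\theta_1+\theta_2)}L_1L_2\psi_0=U(T_1;u_1)\bigl(\phi_2-e^{i\theta_2}L_2\psi_0\bigr)+e^{i\theta_2}\bigl(\psi(T_1;u_1,L_2\psi_0)-e^{i\theta_1}L_1L_2\psi_0\bigr).
\]
The two pieces have $L^2$-norm strictly less than $\epsilon/2$ each (using that $U(T_1;u_1)$ is unitary), so the total error is $<\epsilon$ while the total time is $\leq \epsilon$. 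Since $L_1L_2$ is unitary, it is $L^2$-STAR.

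\emph{Strong limit.} Let $(L_n)$ be a sequence of $L^2$-STAR operators converging strongly to a unitary operator $L$ (the limit is unitary by assumption, since by definition the elements of the $L^2$-STAR semigroup are unitary). Fix $\psi_0\in\mathcal{S}$ and $\epsilon>0$. By strong convergence, pick $n$ such that $\|L_n\psi_0-L\psi_0\|_{L^2}<\epsilon/2$. Apply Definition~\ref{def:L2STAR} to $L_n$ with tolerance $\epsilon/2$ (and time bound $\epsilon$): there exist $T\in[0,\epsilon]$, $\theta\in[0,2\pi)$ and $u\in PWC(0,T)$ such that $\|\psi(T;u,\psi_0)-e^{i\theta}L_n\psi_0\|_{L^2}<\epsilon/2$. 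The triangle inequality gives
\[
\|\psi(T;u,\psi_0)-e^{i\theta}L\psi_0\|_{L^2}\leq \|\psi(T;u,\psi_0)-e^{i\theta}L_n\psi_0\|_{L^2}+\|L_n\psi_0-L\psi_0\|_{L^2}<\epsilon,
\]
proving that $L$ is $L^2$-STAR.

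\emph{Main points to watch.} The only substantive ingredients are (i) the semigroup/linearity/unitarity of the Schrödinger flow with piecewise constant controls, which is established in Appendix~\ref{App:WP}, and (ii) the fact that phases factor through the linear propagator, i.e.\ $U(T;u)(e^{i\theta}\phi)=e^{i\theta}U(T;u)\phi$. No obstacle is expected beyond budgeting the tolerances as $\epsilon/2+\epsilon/2$ and the times as $\epsilon/2+\epsilon/2$.
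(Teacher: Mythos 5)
Your proof is correct and follows essentially the same route as the paper's (concatenation of controls plus unitarity of the propagator for composition, and a triangle inequality for the strong limit); you track the global phases $\theta_1,\theta_2,\theta$ explicitly, which the paper's own proof suppresses as a harmless abuse. The only slightly misleading remark is the parenthetical claim that the strong limit of unitaries is automatically unitary — it is only automatically an isometry; unitarity of the limit is part of the hypothesis and is verified directly in each application.
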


\begin{definition} \label{def:Diffc0}
${\rm Diff}^0(M)$ (resp. ${\rm Diff}_c^0(M)$) denotes the group of diffeomorphisms of $M$ which are $C^\infty$, isotopic to the identity (resp. and with compact support i.e. for every $P\in {\rm Diff}_c^0(M)$ there exists $ K\subset M$ compact such that $P={\rm Id}$ on $M\setminus K$).
\end{definition}

\begin{definition}[Unitary action of ${\rm Diff}^0(M)$ on $L^2(M,\C)$] 
\label{def:Diffc0_action}
For $P\in {\rm Diff}^0(M)$, the unitary operator on $L^2(M,\C)$ associated with $P$ is defined by
\begin{equation} \label{Def:LP}
\mathcal{L}_{P}\psi =  |J_P|^{1/2}(\psi\circ P),
\end{equation}
where $|J_P| = \text{det}(DP)$ is the determinant of the Jacobian matrix of $P$. Then $\|\mathcal{L}_{P}\psi\|_{L^2}=\|\psi\|_{L^2}.$
\end{definition}

\begin{definition}
${\rm Vec}(M)$ (resp. ${\rm Vec}_c(M)$) denotes the space of 
globally Lipschitz (resp. compactly supported)
smooth vector fields on $M$.
\end{definition}

\begin{definition}[Flows $\phi_f^s$] \label{def:flow}
For $f \in {\rm Vec}(M)$, $\phi_f^s$
denotes the flow associated with $f$ at time $s$:
for every $x_0 \in M$, $x(s)= 
\phi_f^s(x_0)$ is the solution of
the ODE $\dot{x}(s)=f(x(s))$ associated with the initial condition $x(0)=x_0$.
 \end{definition}

For $f \in {\rm Vec}(M)$ (resp. ${\rm Vec}_c(M)$) then $\phi_f^1 \in {\rm Diff}^0(M)$ (resp. ${\rm Diff}_c^0(M)$).

\begin{definition}
We introduce the following small-time controllability (STC) notions:
\begin{itemize}
\item \textbf{STC of phases}: for every $\varphi \in L^2(M,\R)$, the operator $e^{i\varphi}$ is $L^2$-STAR,
\item \textbf{STC of the group ${\rm Diff}_c^0(M)$}: for every $P\in {\rm Diff}_c^0(M)$, the operator $\mathcal{L}_P$ is $L^2$-STAR. 
\item \textbf{STC of flows}: for every $f \in {\rm Vec}_c(M)$, the operator $\mathcal{L}_{\phi_f^1}$ is $L^2$-STAR.
\end{itemize}
\end{definition}

Clearly, the following implication holds
\begin{equation} \label{diffeo_flow}
\text{STC of the group }  {\rm Diff}_c^0(M)\,
 \qquad \Rightarrow \qquad
\text{STC of flows}.
\end{equation}

%---------------------------------------------------
%\subsection{Proof strategy}
\subsection{Reduction to the control of phases and flows}

The goal of this section is to prove the following implication, for systems (\ref{eq:torus}) and (\ref{eq:line})
\begin{equation} \label{phase+flows->STAC}
\text{STC of phases and flows } 
\qquad \Rightarrow  \qquad
\text{$L^2$-STAC.}    
\end{equation}
The first step consists in proving that
\begin{equation} \label{phase+diffeo->STAC}
\text{STC of phases and the group }  {\rm Diff}_c^0(M) 
\qquad \Rightarrow  \qquad
\text{$L^2$-STAC.}    
\end{equation}
This is a consequence of the transitivity of the action of ${\rm Diff}_c^0(M)$ on densities, proved by Moser \cite{moser} and recalled in Appendix \ref{subsec:Moser}.

\begin{theorem}\label{thm:reduction}
The STC of phases and the group ${\rm Diff}_c^0(M)$ implies
the $L^2$-STAC.
\end{theorem}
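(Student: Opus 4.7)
Fix $\psi_0,\psi_1 \in \mathcal{S}$ and $\epsilon > 0$. The plan is to construct a unitary operator of the form $L = e^{i\varphi}\mathcal{L}_P$, with $P \in {\rm Diff}_c^0(M)$ and $\varphi \in L^2(M,\R)$, that sends $\psi_0$ to a prescribed target close to $\psi_1$; the conclusion then follows by combining the two STC hypotheses with Lemma \ref{lem:reachable-operators}. The splitting modulus/phase is natural, since $\mathcal{L}_P$ will absorb the density mismatch $|\psi_0|^2 \to |\psi_1|^2$ (Moser) while $e^{i\varphi}$ will fix the residual phase.

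First I would approximate $\psi_j$ in $L^2$ by $\tilde\psi_j := r_j e^{i\alpha_j} \in \mathcal{S}$ with $\|\psi_j - \tilde\psi_j\|_{L^2} < \epsilon/3$, where $\alpha_j : M \to \R$ is any measurable argument of $\psi_j$ and $r_j : M \to (0,\infty)$ is smooth. On $\T^d$, one takes $r_j$ to be a mollification of $|\psi_j|$ plus a small positive constant, then renormalizes. On $\R^d$, one approximates $|\psi_j|$ by a smooth compactly supported nonnegative function and adds a fixed, smooth, strictly positive background density (common to $j=0,1$), so that $r_0$ and $r_1$ coincide outside a compact set $K \subset \R^d$. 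Note that $\psi_j - \tilde\psi_j = (|\psi_j|-r_j)e^{i\alpha_j}$, so $L^2$-closeness of $\tilde\psi_j$ to $\psi_j$ reduces to that of $r_j$ to $|\psi_j|$. The densities $|\tilde\psi_0|^2 = r_0^2$ and $|\tilde\psi_1|^2 = r_1^2$ are then smooth, strictly positive, of equal total mass $1$, and agree outside $K$ in the $\R^d$ case. Moser's theorem \cite{moser} now provides $P \in {\rm Diff}_c^0(M)$ with $P^*(r_0^2 dx) = r_1^2 dx$, i.e.\ $|J_P|(x)(r_0\circ P)^2(x) = r_1^2(x)$, so that $|\mathcal{L}_P \tilde\psi_0|(x) = |J_P(x)|^{1/2} r_0(P(x)) = r_1(x) = |\tilde\psi_1|(x)$ pointwise. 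I then set $\varphi(x) := \alpha_1(x) - \alpha_0(P(x))$ mod $2\pi$; by construction $\varphi$ is measurable and bounded, and (in the $\R^d$ case) compactly supported since $P = {\rm Id}$ and $\alpha_1 = \alpha_0$ outside $K$, so $\varphi \in L^2(M,\R)$. Moreover $e^{i\varphi} \mathcal{L}_P \tilde\psi_0 = r_1 \, e^{i(\alpha_1-\alpha_0\circ P)+i\alpha_0\circ P} = r_1 e^{i\alpha_1} = \tilde\psi_1$.

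By the STC of ${\rm Diff}_c^0(M)$ the operator $\mathcal{L}_P$ is $L^2$-STAR, and by the STC of phases so is $e^{i\varphi}$; by Lemma \ref{lem:reachable-operators} the composition $L := e^{i\varphi}\mathcal{L}_P$ is $L^2$-STAR. Applying Definition \ref{def:L2STAR} to $L$, the initial state $\psi_0$ and tolerance $\epsilon/3$ gives $T \in [0,\epsilon]$, a global phase $\theta \in [0,2\pi)$ and a piecewise constant control $u$ with $\|\psi(T;u,\psi_0) - e^{i\theta} L\psi_0\|_{L^2} < \epsilon/3$. Since $L$ is unitary, $\|L\psi_0 - L\tilde\psi_0\|_{L^2} = \|\psi_0 - \tilde\psi_0\|_{L^2} < \epsilon/3$, while $L\tilde\psi_0 = \tilde\psi_1$ with $\|\tilde\psi_1 - \psi_1\|_{L^2} < \epsilon/3$; the triangle inequality then yields $\|\psi(T;u,\psi_0) - e^{i\theta}\psi_1\|_{L^2} < \epsilon$, as required. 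The main obstacle is the approximation step, especially on $\R^d$: one must simultaneously arrange smoothness and strict positivity of $r_j^2$, coincidence of $r_0$ and $r_1$ outside a common compact set, $L^2$-closeness to $|\psi_j|$, and unit total mass — all so that the compactly supported version of Moser's theorem applies and yields $P \in {\rm Diff}_c^0(\R^d)$ rather than merely in ${\rm Diff}^0(\R^d)$.
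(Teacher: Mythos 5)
Your proposal follows essentially the same route as the paper: decompose the state into modulus and phase, use Moser's theorem to match the moduli via some $P\in\mathrm{Diff}_c^0(M)$, fix the residual phase by $e^{i\varphi}$, and combine the two STC hypotheses via Lemma~\ref{lem:reachable-operators}. The paper first passes to a dense subset $\frak{D}(M)$ (Lemmas~\ref{Lem:dense_tore} and~\ref{Lem:dense_line}) and then applies $e^{i\phi_1}\mathcal{L}_P e^{-i\phi_0}$; you write the same operator as a single phase $\varphi=\alpha_1-\alpha_0\circ P$ followed by $\mathcal{L}_P$, which is a harmless repackaging.

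There is, however, a gap in your approximation step on $\R^d$. You claim that $\varphi$ is compactly supported ``since $P=\mathrm{Id}$ and $\alpha_1=\alpha_0$ outside $K$.'' But your construction modifies only the moduli $|\psi_j|\rightsquigarrow r_j$, not the phases $\alpha_j=\arg\psi_j$, and for generic $\psi_0,\psi_1\in L^2(\R^d,\C)$ the arguments $\alpha_0,\alpha_1$ are bounded but unrelated on all of $\R^d$. Hence $\alpha_1-\alpha_0\circ P$ is bounded but typically nonzero on a set of infinite measure, so $\varphi\notin L^2(\R^d,\R)$; the STC of phases then does not directly apply. (On $\T^d$ this issue disappears since bounded measurable functions on a compact space are in $L^2$.) The paper avoids this in Lemma~\ref{Lem:dense_line} by first replacing $\psi_0,\psi_1$ by compactly supported $L^2$-approximants, so that the arguments extended by zero are bounded and compactly supported, hence in $L^2(\R^d,\R)$, and moreover vanish outside a common cube where the moduli also agree, so that Proposition~\ref{Prop:Moser_Rd} gives $P\in\mathrm{Diff}_c^0(\R^d)$. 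You could equivalently fix this by also modifying the phase of $\tilde\psi_j$ to vanish outside a large cube $Q$ (at a cost of $O(\|\psi_j\|_{L^2(\R^d\setminus Q)}+\|h\|_{L^2(\R^d\setminus Q)})$ in the approximation error, which can be made small); or, since $\varphi$ is bounded, approximate $e^{i\varphi}$ strongly by $e^{i\varphi\,1_{B_n}}$ with $\varphi\,1_{B_n}\in L^2$ and invoke the closure under strong limits in Lemma~\ref{lem:reachable-operators}. Either way, an additional step is needed before concluding that $e^{i\varphi}$ is $L^2$-STAR.
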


\begin{proof}
It suffices to prove the property of Definition \ref{def:STAC} for $(\psi_0,\psi_1)$ belonging to a subset of $\mathcal{S} \times \mathcal{S}$ which is dense for the $\|.\|_{L^2}$-topology, because, for every $T>0$ and $u \in \text{PWC}(0,T)$, the operator $\psi(T;u,.)$ is an isometry of $L^2(M,\mathbb{C})$. We use the dense set $\frak{D}(M)$ given by Lemma \ref{Lem:dense_tore} when $M=\T^d$ and Lemma \ref{Lem:dense_line} when $M=\R^d$. Let $(\psi_0,\psi_1) \in \frak{D}(M)$. Then $\psi_j=\rho_j e^{i \phi_j}$ where $\phi_j \in L^2(M,\R)$ and the $\rho_j$ satisfy the assumptions of Proposition \ref{lem:transitivity-torus} if $M=\T^d$ 
and 
Proposition \ref{Prop:Moser_Rd} if $M=\R^d$.
Thus there exist $P\in {\rm Diff}_c^0(M)$ such that 
$\rho_1=\mathcal{L}_P \rho_0$. Then 
$e^{i \phi_1} \mathcal{L}_P e^{-i \phi_0} \psi_0=
e^{i \phi_1} \mathcal{L}_P \rho_0 =
e^{i \phi_1} \rho_1 =\psi_1$. 
By Lemma \ref{lem:reachable-operators}, the operator $e^{i \phi_1} \mathcal{L}_P e^{-i \phi_0}$ is $L^2$-STAR, thus there exist $T \in [0,\epsilon]$ and $u \in \text{PWC}(0,T)$ such that $\|\psi(T;u,\psi_0)-\psi_1\|_{L^2}<\epsilon$.
\end{proof}

The second step consists in proving the reciprocal of (\ref{diffeo_flow}) i.e.
\begin{equation} \label{flows->diffeo}
\text{STC of flows}
\qquad \Rightarrow \qquad
\text{STC of the group } {\rm Diff}_c^0(M).
\end{equation}
This is an application of a result of Thurston \cite{thurston} on the simplicity of the group ${\rm Diff}_c^0(M)$.

\begin{theorem}\label{thm:reduction-thurston}
The STC of flows implies the STC of the group ${\rm Diff}_c^0(M)$.
\end{theorem}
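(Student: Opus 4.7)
The plan is to show that the set
\[
\mathcal{H}:=\{P\in {\rm Diff}_c^0(M)\ :\ \mathcal{L}_P \text{ is } L^2\text{-STAR}\}
\]
coincides with the full group ${\rm Diff}_c^0(M)$. First I would observe that $\mathcal{H}$ is a subsemigroup: the composition identity $\mathcal{L}_P\mathcal{L}_Q=\mathcal{L}_{Q\circ P}$ together with Lemma \ref{lem:reachable-operators} shows that if $P,Q\in\mathcal{H}$ then $Q\circ P\in\mathcal{H}$. By the STC of flows hypothesis, $\mathcal{H}$ contains $\mathcal{F}:=\{\phi_f^1\ :\ f\in {\rm Vec}_c(M)\}$, and since $(\phi_f^1)^{-1}=\phi_{-f}^1\in\mathcal{F}$, the set $\mathcal{H}$ is also closed under inverses of flows.

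Let $N$ be the subgroup of ${\rm Diff}_c^0(M)$ generated by $\mathcal{F}$. By the two previous observations, $N\subseteq \mathcal{H}$. The key structural step is to prove that $N$ is a \emph{normal} subgroup of ${\rm Diff}_c^0(M)$. For this, I would use the standard conjugation formula for flows: for every $P\in{\rm Diff}_c^0(M)$ and $f\in {\rm Vec}_c(M)$,
\[
P\circ \phi_f^t\circ P^{-1}=\phi_{P_*f}^t,
\]
where $P_*f=(DP\circ P^{-1})\, f\circ P^{-1}$ is the pushforward of $f$ by $P$. Since $P$ is a diffeomorphism with compact support and $f$ is compactly supported, $P_*f$ lies in ${\rm Vec}_c(M)$, so the conjugate $P\circ \phi_f^1\circ P^{-1}$ is again a time-$1$ flow of a compactly supported vector field, hence in $\mathcal{F}\subseteq N$. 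Normality of $N$ follows.

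Since $N$ is a nontrivial normal subgroup of ${\rm Diff}_c^0(M)$, Thurston's theorem \cite{thurston} on the simplicity of ${\rm Diff}_c^0(M)$ forces $N={\rm Diff}_c^0(M)$. Combined with the inclusion $N\subseteq\mathcal{H}\subseteq{\rm Diff}_c^0(M)$ this yields $\mathcal{H}={\rm Diff}_c^0(M)$, i.e.\ $\mathcal{L}_P$ is $L^2$-STAR for every $P\in{\rm Diff}_c^0(M)$, which is precisely the STC of the group ${\rm Diff}_c^0(M)$.

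The substantive ingredients are external (Thurston's simplicity theorem and Lemma \ref{lem:reachable-operators}); the only delicate points internal to the proof are checking that $P_*f\in{\rm Vec}_c(M)$ when $P\in{\rm Diff}_c^0(M)$ and $f\in{\rm Vec}_c(M)$ (so that normality holds within the correct class) and recording that $(\phi_f^1)^{-1}$ is itself a flow in the class (so that the semigroup-closure furnished by Lemma \ref{lem:reachable-operators} upgrades to a group containing $N$). Neither is an obstacle: they are verifications at the level of support. The conceptual heart of the argument is the passage from a semigroup containing time-$1$ flows to the whole diffeomorphism group via simplicity, and this is what makes the conclusion work for general $M$.
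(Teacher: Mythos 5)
Your proof is correct and takes essentially the same route as the paper: identify the subgroup generated by time-$1$ flows of compactly supported vector fields, show it is normal via the conjugation formula $P\circ\phi_f^1\circ P^{-1}=\phi_{P_*f}^1$, and invoke Thurston's simplicity theorem to conclude it is all of ${\rm Diff}_c^0(M)$. The only cosmetic difference is organizational: you work with the set $\mathcal{H}$ of diffeomorphisms whose associated operator is $L^2$-STAR and show it contains the subgroup $N$ generated by flows, while the paper first shows $F(M)={\rm Diff}_c^0(M)$ and then observes directly that $\mathcal{L}_P$ factors as a composition of STAR operators; both arguments use the same ingredients (Lemma \ref{lem:reachable-operators}, normality via pushforward, simplicity).
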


\begin{proof}
Thurston proved in \cite{thurston} that, if $M$ is a connected manifold, then ${\rm Diff}_c^0(M)$ is a simple group (see also \cite{Herman,mather}). Moreover,
 $$F(M):=\{
\phi_{f_n}^1 \circ \cdots \circ \phi_{f_1}^1
; n\in\N^*, f_1,\dots,f_n \in {\rm Vec}_c(M) \}$$
is a normal subgroup of ${\rm Diff}_c^0(M)$. Indeed, if
$X=\phi_{f_n}^1 \circ \cdots \circ \phi_{f_1}^1 \in F(M)$ and $P\in {\rm Diff}^0_c(M)$ then 
$PXP^{-1}
= \phi_{g_n}^1
\circ\dots\circ 
\phi_{g_1}^1$
where $g_j \in {\rm Vec}_c(M)$ is the pushforward of $f_j$ by $P$
$$g_j(x) := (P \star f_j)(x) = DP(P^{-1}(x))\, f_j(P^{-1}(x)),$$
thus $PXP^{-1} \in F(M)$. Therefore ${\rm Diff}_c^0(M) = F(M)$ i.e. any $P \in {\rm Diff}_c^0(M)$ is of the form $P=\phi_{f_n}^1 \circ \cdots \circ \phi_{f_1}^1$. Then
$\mathcal{L}_P = \mathcal{L}_{\phi_{f_n}^1} \circ \dots \circ \mathcal{L}_{\phi_{f_1}^1}$ is $L^2$-STAR by Lemma \ref{lem:reachable-operators}.
\end{proof}

Finally, gathering (\ref{phase+diffeo->STAC}) and (\ref{flows->diffeo}), we obtain (\ref{phase+flows->STAC}). The arguments used in this section work for systems (\ref{eq:torus}) and (\ref{eq:line}), but also, more generally, for any Schrödinger equation (\ref{eq:schro}), posed either on a closed connected manifold $M$ (e.g. $M=\T^d$), or $M=\R^d$, which is well-posed for piecewise constant controls.

%--------------------------------------------
\subsection{STC of phases implies STC of flows}

The STC of phases was recently established for systems \eqref{eq:torus} and \eqref{eq:line}, see Section \ref{sec:phase} for details.

\begin{theorem}[\cite{duca-nersesyan,duca-pozzoli}]\label{thm:DN-DP}
The STC of phases holds for systems (\ref{eq:torus}) and (\ref{eq:line}).
\end{theorem}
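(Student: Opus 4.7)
The strategy is the Agrachev--Sarychev saturation method, adapted to bilinear Schr\"odinger equations. Set
\[
\mathcal{W}_\infty := \{ \varphi \in L^2(M,\R) : e^{i\varphi} \text{ is } L^2\text{-STAR} \}.
\]
By Lemma \ref{lem:reachable-operators}, $\mathcal{W}_\infty$ is closed in $L^2(M,\R)$ (strong operator closure of $L^2$-STAR forces $L^2$ closure of $\varphi$) and closed under addition, since $e^{i(\varphi_1+\varphi_2)} = e^{i\varphi_1} \circ e^{i\varphi_2}$; real scaling is inherited from time-rescaling of piecewise-constant controls. The plan is to (i) seed $\mathcal{W}_\infty$ by a high-amplitude limit, (ii) extend it via an iterated Lie bracket (saturation), (iii) verify density in $L^2(M,\R)$ for each of the two systems.

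For (i), a piecewise-constant control $u_j(t) = c_j/\tau$ on $[0,\tau]$ drives $\psi(\tau; u, \psi_0)$ to $e^{-i\sum_j c_j W_j}\psi_0$ in $L^2$ as $\tau \to 0$, because the drift appears in an interaction-picture remainder of order $\tau$. Hence the $L^2$-members of $\mathrm{span}_\R\{W_1,\dots,W_m\}$ lie in $\mathcal{W}_\infty$. For (ii), given $\varphi_1, \varphi_2 \in \mathcal{W}_\infty$, an Agrachev--Sarychev type concatenation of short pulses realizing $e^{\pm i\tau\varphi_k}$ (from (i)) interleaved with short free-evolution intervals, iterated $N$ times with $\tau \to 0$ and $N\tau^2 \to s$, converges strongly to $e^{-is\,\nabla\varphi_1 \cdot \nabla\varphi_2}$. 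The algebraic heart is the identity
\[
[[-\Delta + V,\, \varphi_1],\, \varphi_2] \;=\; -2\, \nabla\varphi_1 \cdot \nabla\varphi_2,
\]
a triple bracket of unbounded operators that nevertheless collapses to a pure multiplication operator (the differential terms cancel). Thus $\mathcal{W}_\infty$ is stable under the symmetric bilinear operation $(\varphi_1,\varphi_2) \mapsto \nabla\varphi_1 \cdot \nabla\varphi_2$ whenever the output is in $L^2(M,\R)$.

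For (iii), I would check that the smallest $L^2$-closed subspace containing the seed and stable under this operation is all of $L^2(M,\R)$. For \eqref{eq:torus}, the identity
\[
\nabla\cos\langle k_1,x\rangle \cdot \nabla\cos\langle k_2,x\rangle \;=\; \tfrac{1}{2}\langle k_1,k_2\rangle\bigl(\cos\langle k_1-k_2,x\rangle - \cos\langle k_1+k_2,x\rangle\bigr),
\]
together with its $\sin$ variants, generates inductively every Fourier mode with frequency in the additive lattice $\Z b_1 + \cdots + \Z b_d$; by \eqref{Def:ej} this equals $\Z^d$, so density in $L^2(\T^d,\R)$ follows. For \eqref{eq:line}, iterating the operation on $\{x_1,\dots,x_d, e^{-|x|^2/2}\}$ (the $x_j$ serving as formal generators whose gradient is the constant $e_j$) yields every monomial $x^\alpha e^{-n|x|^2/2}$ with $\alpha \in \N^d$, $n \geq 1$, and this family is dense in $L^2(\R^d,\R)$ via the Hermite basis.

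The main obstacle is the rigorous execution of the saturation step. The iterated bracket above mixes unbounded operators whose domains do not align, so formal Baker--Campbell--Hausdorff expansions are unavailable. The strategy is to work directly with the exact unitary propagators, comparing the concatenated product to the target $e^{-is\,\nabla\varphi_1\cdot\nabla\varphi_2}$ via Duhamel's formula and quantitative commutator estimates on a dense class of smooth initial data (e.g.\ Schwartz), then extending by unitarity in $L^2$. A further subtlety specific to \eqref{eq:line} is the non-$L^2$ nature of the generators $x_j$ and of intermediate bracketed expressions: one must restrict the bilinear operation to configurations whose output is a genuine $L^2$ phase, while still producing enough elements to reach density.
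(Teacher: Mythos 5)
Your overall plan matches the paper's: seed the phase semigroup with the span of the control potentials via a high-amplitude limit, enlarge it by a gradient--bracket operation coming from commuting a phase with the drift, and conclude density via Fourier/Hermite bases. The bilinear operation $(\varphi_1,\varphi_2)\mapsto\nabla\varphi_1\cdot\nabla\varphi_2$ you identify is, by polarization, equivalent to the diagonal operation $\varphi\mapsto|\nabla\varphi|^2$ that the paper uses on $\T^d$, and it subsumes the partial-derivative operation $\varphi\mapsto\partial_{x_j}\varphi$ the paper uses on $\R^d$ (take $\varphi_1 = x_j$). So the algebra and density steps are sound.

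The real gap is in the execution of the saturation step, which you yourself flag as the main obstacle. The scheme you sketch --- $N$ rounds of pulses $e^{\pm i\tau\varphi_k}$ interleaved with free evolution, with $\tau\to0$ and $N\tau^2\to s$ --- cannot produce the double bracket. A single round of the classic commutator $e^{i\tau\varphi_1}e^{i\epsilon(\Delta-V)}e^{-i\tau\varphi_1}e^{-i\epsilon(\Delta-V)}$ yields the \emph{first} bracket $[\varphi_1,\Delta-V]$, which is a first-order transport operator, not a multiplication; you would still need a second, nested commutation by $\varphi_2$, and the correct scaling then involves the free-evolution duration as a separate parameter (something like $N\tau^2\epsilon\to s$, not $N\tau^2\to s$). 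As written, neither a concrete scheme nor a correct scaling is given, and the domain/BCH issues you raise are not resolved. The paper avoids all of this with a single conjugation of amplitude $\tau^{-1/2}$ and duration $\tau$,
\[
e^{i\varphi/\sqrt{\tau}}\, e^{i\tau(\Delta-V)}\, e^{-i\varphi/\sqrt{\tau}} \;=\; \exp\!\Bigl(i\tau(\Delta-V)-\sqrt{\tau}\bigl(2\langle\nabla\varphi,\nabla\rangle+\Delta\varphi\bigr)-i|\nabla\varphi|^2\Bigr),
\]
which is an \emph{exact} identity (Proposition~\ref{lem:conjugation}), because the generator is a finite polynomial in $\sqrt{\tau}$ --- third and higher brackets of $\varphi$ with $\Delta$ vanish since $[|\nabla\varphi|^2,\varphi]=0$. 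The $\tau\to0$ limit is then a clean application of Proposition~\ref{prop:trotter} on the common core, with no error term to control. This is the ingredient your argument is missing. Two minor further points: on $\R^d$ the paper does not use $\nabla x_j\cdot\nabla\varphi$ directly; it first extracts the translation semigroups $e^{u\partial_{x_j}}$ as $L^2$-STAR (these are diffeomorphisms, not phases) and then conjugates phases by them to get $e^{-i\partial_{x_j}\varphi}$, which sidesteps the fact that $x_j\notin L^2(\R^d)$ more cleanly than working with ``formal generators.'' And the passage from $\|\varphi_n-\varphi\|_{L^2}\to0$ to strong convergence of $e^{i\varphi_n}\to e^{i\varphi}$ is not automatic; the paper extracts an a.e.\ convergent subsequence and invokes dominated convergence --- a small but necessary step that your closedness claim glosses over.
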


By (\ref{phase+flows->STAC}), in order to get Theorems \ref{Main_Thm_torus} and \ref{thm:global-euclidean}, it suffices to prove the following result.

\begin{theorem}\label{thm:vector-control}
The STC of flows holds for systems (\ref{eq:torus}) and (\ref{eq:line}).
\end{theorem}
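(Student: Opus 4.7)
The plan is to combine the small-time control of phases (Theorem~\ref{thm:DN-DP}) with the always-available drift evolution $e^{-it(-\Delta+V)}$, via Lie-bracket/commutator techniques, to approximate in arbitrarily small total time the flow $\mathcal{L}_{\phi_f^1}$ of every $f\in{\rm Vec}_c(M)$. The argument splits into two stages: first realize the flows of gradient vector fields, then extend to all of ${\rm Vec}_c(M)$ by iterated Lie brackets performed at the group level.

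\textbf{Stage~1: gradient flows.} I would show that $\mathcal{L}_{\phi_{\nabla\varphi}^s}$ is $L^2$-STAR for every $\varphi\in C_c^\infty(M,\R)$ and every $s\in\R$. The key identity is the exact gauge transformation
\[
e^{i\lambda\varphi}(-\Delta+V)e^{-i\lambda\varphi} \;=\; (-\Delta+V)\,+\,2i\lambda\,X_{\nabla\varphi}\,+\,\lambda^2|\nabla\varphi|^2,
\]
where $X_{\nabla\varphi}:=\nabla\varphi\cdot\nabla+\tfrac{1}{2}\Delta\varphi$ is the skew-adjoint generator of $s\mapsto\mathcal{L}_{\phi_{\nabla\varphi}^s}$. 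Conjugating a short-time drift by this gauge yields
\[
e^{i\lambda\varphi}\,e^{-i\tau(-\Delta+V)}\,e^{-i\lambda\varphi} \;=\; \exp\!\bigl(-i\tau(-\Delta+V)+2\tau\lambda\, X_{\nabla\varphi}-i\tau\lambda^2|\nabla\varphi|^2\bigr),
\]
an operator that is already $L^2$-STAR because each factor on the left is: the drift takes time $\tau$ (just set all controls to zero), and each phase $e^{\pm i\lambda\varphi}$ is $L^2$-STAR by Theorem~\ref{thm:DN-DP}. Setting $\tau=s/(2\lambda)$ and letting $\lambda\to\infty$, the transport coefficient $2\tau\lambda$ stays equal to $s$, the drift term $\tau(-\Delta+V)$ tends to zero strongly, and the large multiplicative phase $e^{-i(s\lambda/2)|\nabla\varphi|^2}$ is compensated by composing with its inverse, itself $L^2$-STAR. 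A Trotter/Duhamel analysis together with Lemma~\ref{lem:reachable-operators} (closure of $L^2$-STAR under composition and strong limits) then delivers $\mathcal{L}_{\phi_{\nabla\varphi}^s}$ as an $L^2$-STAR operator.

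\textbf{Stage~2: arbitrary flows via Lie brackets.} Once all gradient flows are $L^2$-STAR, the classical Trotter commutator approximation
\[
\Bigl(\mathcal{L}_{\phi_{\nabla\varphi_1}^{\sqrt{t/n}}}\,\mathcal{L}_{\phi_{\nabla\varphi_2}^{\sqrt{t/n}}}\,\mathcal{L}_{\phi_{\nabla\varphi_1}^{-\sqrt{t/n}}}\,\mathcal{L}_{\phi_{\nabla\varphi_2}^{-\sqrt{t/n}}}\Bigr)^{n}\;\longrightarrow\;\mathcal{L}_{\phi_{[\nabla\varphi_1,\nabla\varphi_2]}^{t}} \qquad (n\to\infty),
\]
together with the operator identity $[X_{\nabla\varphi_1},X_{\nabla\varphi_2}]=X_{[\nabla\varphi_1,\nabla\varphi_2]}$ and Lemma~\ref{lem:reachable-operators}, shows that $\mathcal{L}_{\phi_{[\nabla\varphi_1,\nabla\varphi_2]}^t}$ is $L^2$-STAR, and inductively $\mathcal{L}_{\phi_f^t}$ for every $f$ in the Lie subalgebra $\mathfrak{g}\subset{\rm Vec}_c(M)$ generated by $\{\nabla\varphi:\varphi\in C_c^\infty(M,\R)\}$ under the vector-field bracket. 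It remains to verify that $\mathfrak{g}$ is dense in ${\rm Vec}_c(M)$ in a topology strong enough that $f\mapsto\mathcal{L}_{\phi_f^1}$ is strongly continuous on $L^2$ (the $C^1_{\rm loc}$ topology suffices). On $\R^d$ and $\T^d$ this follows by a direct local computation: already a single bracket of quadratic gradients produces non-gradient vector fields such as $[\nabla(x_1x_2),\nabla(x_1^2/2)]=(x_2,-x_1)$ on $\R^2$, and a polynomial-generation argument combined with a partition-of-unity/cut-off procedure yields density in ${\rm Vec}_c(M)$.

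\textbf{Main obstacle.} The technical heart of the proof is to make the two limit procedures rigorous for the unbounded generators $-\Delta+V$ and $X_{\nabla\varphi}$. In Stage~1 the effective exponent $sX_{\nabla\varphi}-i(s\lambda/2)|\nabla\varphi|^2$ involves two non-commuting operators, so the compensation of the large multiplicative phase by its inverse is not algebraically exact; either a symmetric Strang-type placement of the gauge factors (e.g.\ $e^{-i\lambda\varphi/2}\,e^{-i\tau(-\Delta+V)}\,e^{-i\lambda\varphi/2}$) or a careful Duhamel estimate with cut-offs will be needed to prove that the remainder vanishes strongly as $\lambda\to\infty$ under the scaling $\tau\lambda=s/2$. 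In Stage~2, interchanging the unitary Trotter-commutator limit with the vector-field Lie bracket structure requires the same Agrachev--Sarychev-type care already at play in the saturation proof of Theorem~\ref{thm:DN-DP}; this is the routine but non-trivial ``Lie bracket techniques'' referred to in the introduction.
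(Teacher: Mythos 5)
Your high-level route matches the paper's exactly (phases $\Rightarrow$ gradient flows $\Rightarrow$ Lie-bracket closure $\Rightarrow$ density of the generated algebra), but Stage~1 as written has a genuine gap, and it is not the kind that a Strang splitting or a ``careful Duhamel estimate'' would plug. The problematic step is compensating the large phase $e^{-i(s\lambda/2)|\nabla\varphi|^2}$ by post-composing with its inverse. Since $|\nabla\varphi|^2$ and $X_{\nabla\varphi}$ do not commute, the identity
\[
e^{i(s\lambda/2)|\nabla\varphi|^2}\,\exp\bigl(-i\tfrac{s}{2\lambda}(-\Delta+V)+sX_{\nabla\varphi}-i(s\lambda/2)|\nabla\varphi|^2\bigr)\;=\;\exp\bigl(-i\tfrac{s}{2\lambda}(-\Delta+V)+sX_{\nabla\varphi}\bigr)
\]
is simply false, and the left-hand side need not even have a limit as $\lambda\to\infty$: the Baker--Campbell--Hausdorff corrections from the outer phase are $O(\lambda)$ and do not vanish. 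Relocating the gauge factors symmetrically does not change the exponent (it is the same conjugation), so that idea does not resolve it either. The correct mechanism, and the one the paper uses, is to split the conjugated drift into $n$ Trotter slices and interleave the compensating phase \emph{inside} the product, with the infinitesimal weight $|\nabla\varphi|^2/(n\tau)$ per slice:
\[
\Bigl(e^{\,i\,\frac{|\nabla\varphi|^2}{n\tau}}\;e^{\,i\frac{\varphi}{\tau}}\,e^{\,i\frac{\tau}{n}(\Delta-V)}\,e^{-i\frac{\varphi}{\tau}}\Bigr)^{n}
\;\xrightarrow[\;n\to\infty\;]{}\;\exp\bigl(i\tau(\Delta-V)+\mathcal{T}_{2\nabla\varphi}\bigr)
\;\xrightarrow[\;\tau\to0\;]{}\;e^{\mathcal{T}_{2\nabla\varphi}}=\mathcal{L}_{\phi_{2\nabla\varphi}^1}.
\]
Here the Trotter--Kato formula \emph{adds} $|\nabla\varphi|^2/\tau$ to the exponent of the conjugated drift, which is exactly what cancels the quadratic phase produced by the gauge conjugation. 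This is not a routine technicality: without it, the limit fails.

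On Stage~2, your Trotter commutator formula with $\sqrt{t/n}$ scaling is the right heuristic, but for the unbounded skew-adjoint generators $X_{\nabla\varphi_j}$ it is substantially harder to justify than the two-step scheme used in the paper, which first fixes a small $t$, applies the standard Trotter--Kato formula to
$\bigl(e^{-\frac{1}{tn}\mathcal{T}_f}\exp(e^{-t\mathcal{T}_g}\tfrac{1}{tn}\mathcal{T}_f e^{t\mathcal{T}_g})\bigr)^n$ as $n\to\infty$, and only afterwards sends $t\to0$ using a first-order expansion $e^{-t\mathcal{T}_g}\mathcal{T}_f e^{t\mathcal{T}_g}=\mathcal{T}_f+t\,\mathcal{T}_{[f,g]}+o(t)$ established pointwise on $C_c^\infty$. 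If you want to avoid importing a separate commutator-Trotter theorem for unbounded operators, you should adopt that two-step structure. Finally, for Stage~3 your single-bracket example is correct, but you still owe a complete density proof; in the paper this is done by explicitly generating all trigonometric monomial vector fields on $\T^d$ and all Hermite-type vector fields on $\R^d$ (on $\R^d$ the constant fields $e_j$, available through the controls $x_j$, and Gaussian-weighted gradients are essential, which your partition-of-unity sketch does not account for).
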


Our strategy to prove this result is the following one
\begin{equation} \label{strategy}
\text{STC of phases}
\quad \Rightarrow \quad
\text{STC of flows of gradient vector fields}
\quad \Rightarrow \quad
\text{STC of flows.}
\end{equation}
Precisely, we introduce the following space of gradient vector fields on $M$
\begin{equation} \label{def:G_gothique}
    \frak{G}:=\{\nabla \varphi\mid \varphi\in C^\infty\cap L^2(M,\R), \nabla\varphi \in W^{1,\infty}\cap L^4(M) \text{ or }  \nabla\varphi=\text{const.} \}
\end{equation}
(which is just $\{ \nabla \varphi ; \varphi \in C^{\infty}(M,\R)\}$ when $M=\T^d$)
and the first step of our strategy consists in deducing from Theorem \ref{thm:DN-DP} the following result.

\begin{theorem}\label{prop:gradient-control}
Let $f =\nabla \varphi \in \frak{G}$ and $P:=\phi_{f}^1$. Then 
$\mathcal{L}_P$ is $L^2$-STAR for \eqref{eq:torus} and \eqref{eq:line}.
 \end{theorem}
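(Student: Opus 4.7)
The plan rests on the following conjugation identity, valid for any $\varphi$ satisfying the regularity in the definition of $\frak{G}$ and any $\lambda \in \R$:
$$e^{-i\lambda\varphi}(-\Delta+V)e^{i\lambda\varphi} \;=\; (-\Delta+V) \,+\, \lambda^{2}|\nabla\varphi|^{2} \,-\, 2i\lambda\bigl(\nabla\varphi\cdot\nabla + \tfrac{1}{2}\Delta\varphi\bigr).$$
Writing $H:=-\Delta+V$, $W:=|\nabla\varphi|^{2}$ and $\mathcal{A}:=\nabla\varphi\cdot\nabla+\tfrac{1}{2}\Delta\varphi$, one checks that $\mathcal{A}$ is skew-adjoint on $L^{2}(M,\C)$ and, through the definition \eqref{Def:LP}, is exactly the generator of the gradient-flow unitary group, $\mathcal{L}_{\phi_{\nabla\varphi}^{s}}=e^{s\mathcal{A}}$. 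Proving the theorem thus amounts to showing that $e^{\mathcal{A}}$ is $L^{2}$-STAR.

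My candidate is the operator
$$\Xi_{\lambda}(s) \;:=\; e^{is\lambda W/2}\, e^{-i\lambda\varphi}\, e^{-i\frac{s}{2\lambda} H}\, e^{i\lambda\varphi} \;=\; e^{is\lambda W/2}\, e^{-i\frac{s}{2\lambda} H_{\lambda}}, \qquad H_{\lambda}:=H+\lambda^{2}W-2i\lambda\mathcal{A}.$$
Each of the four factors is $L^{2}$-STAR: the three phase operators by Theorem \ref{thm:DN-DP} (using $\varphi\in L^{2}$ and $W\in L^{2}$, and noting that the condition $\nabla\varphi\in L^{4}$ built into $\frak{G}$ is precisely what makes the cancellation phase $e^{is\lambda W/2}$ admissible), while the short free evolution $e^{-i(s/(2\lambda))H}$ is trivially STAR by setting all controls to zero on an interval of length $s/(2\lambda)$. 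Hence, by Lemma \ref{lem:reachable-operators}, $\Xi_{\lambda}(s)$ is $L^{2}$-STAR and can be realized in arbitrarily small time. Formally, writing
$$-i\tfrac{s}{2\lambda}H_{\lambda} \;=\; -i\tfrac{sH}{2\lambda} \;-\; i\tfrac{s\lambda W}{2} \;-\; s\mathcal{A},$$
the first summand vanishes as $\lambda\to\infty$, the second (singular) phase is absorbed by the prefactor $e^{is\lambda W/2}$, and only the generator $-s\mathcal{A}$ should remain. I would conclude by showing the strong convergence $\Xi_{\lambda}(s)\to e^{-s\mathcal{A}}$ on $L^{2}(M,\C)$ and applying the strong-closure part of Lemma \ref{lem:reachable-operators}; the substitution $\varphi\leftrightarrow -\varphi$ turns $\mathcal{A}$ into $-\mathcal{A}$ and therefore delivers $e^{\mathcal{A}}=\mathcal{L}_{\phi_{\nabla\varphi}^{1}}$.

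The hard part is precisely this last strong limit. Since $H_{\lambda}$ has coefficients that diverge with $\lambda$, neither Baker-Campbell-Hausdorff nor naive Trotter splitting applies directly: the commutator $[W,\mathcal{A}]$ appears multiplied by growing powers of $\lambda$. The regularity built into $\frak{G}$ is designed exactly to tame this. A direct computation yields $[W,\mathcal{A}]=-\nabla\varphi\cdot\nabla W$, a bounded real-valued multiplication operator under $\nabla\varphi\in W^{1,\infty}$, and moreover $[W,[W,\mathcal{A}]]=0$. Consequently, the divergent contributions are scalar multiplicative quantities (so they act again as phases) and can be iteratively neutralized by inserting additional $L^{2}$-phase corrections that remain admissible by the $L^{4}$ condition. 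Concretely, I would pass to the interaction picture with respect to the singular term $\lambda^{2}W$, derive a uniform-in-$\lambda$ Duhamel bound for the residual dynamics on a dense regular subspace, and then extend by the unitarity of $\Xi_{\lambda}(s)$. Finally, the degenerate case $\nabla\varphi=\text{const}$, relevant only on $M=\R^{d}$ and corresponding to translations, is treated separately using the linear controls $W_{j}(x)=x_{j}$ in \eqref{eq:line} via a classical short-time/large-amplitude argument.
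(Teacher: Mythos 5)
The key idea you identified---conjugating the free propagator by phases to produce the transport generator, plus a compensating phase to kill the singular $\lambda^{2}W$ term---is the right one and is also the paper's. But the way you assemble it skips the Trotter step that makes the cancellation work, and this gap is not cosmetic: the strong limit $\Xi_\lambda(s)\to e^{-s\mathcal{A}}$ is \emph{false} as stated.

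To see why, compute the interaction picture generator. With $U(s):=\Xi_\lambda(s)$ one gets
$$\partial_s U = \Big(-\mathcal{A}-i\tfrac{H}{2\lambda}+\tfrac{s}{4}[W,H]-i\tfrac{s\lambda}{2}[W,\mathcal{A}]-i\tfrac{s^2\lambda}{8}|\nabla W|^2\Big)U,$$
where the last two terms arise because $[W,\mathcal{A}]=-\nabla\varphi\cdot\nabla W$ and $[W,[W,H]]=-2|\nabla W|^2$. There are two distinct obstructions. First, the $O(\lambda)$ multiplication terms diverge; your ``iterative phase corrections'' are in principle the right tool for them, but each correction reintroduces a new $O(\lambda)$ multiplication (through $\text{ad}_{\mathcal{A}}$), so the series must be summed explicitly as an oscillatory phase and then matched by a separate $L^{2}$-phase factor inserted from the outside---this is a nontrivial piece of work the proposal only gestures at. Second, and fatal to the plan as written, is the $O(1)$ term $\tfrac{s}{4}[W,H]=\tfrac{s}{4}(\Delta W+2\nabla W\cdot\nabla)$: it is a genuine transport by the vector field $\tfrac{s}{2}\nabla W$, not a multiplication, so no phase correction can remove it, and the interaction-picture dynamics does not converge to $e^{-s\mathcal{A}}$ but to a time-ordered exponential mixing the flows of $\nabla\varphi$ and $\nabla W$. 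Your ``uniform-in-$\lambda$ Duhamel bound'' cannot hold because the generator is not uniformly bounded in $\lambda$; and even after removing the divergence one lands on the wrong limiting operator.

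The paper avoids all of this with one extra move: it inserts the phase $e^{i|\nabla\varphi|^{2}/(n\tau)}$ not once but $n$ times, interleaved with the conjugated propagator, and takes the Trotter--Kato limit $n\to\infty$ at \emph{fixed} $\tau$. At the level of generators (Proposition~\ref{prop:trotter-kato}), this produces the exponential of the sum $\frac{|\nabla\varphi|^{2}}{\tau}+\tau\,e^{i\varphi/\tau}(\Delta-V)e^{-i\varphi/\tau}=\tau(\Delta-V)-i\mathcal{T}_f$, in which the singular $\pm|\nabla\varphi|^{2}/\tau$ contributions cancel exactly and only a vanishing $\tau(\Delta-V)$ plus the desired transport remain. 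Then $\tau\to 0$ gives $e^{\mathcal{T}_f}$ by Proposition~\ref{prop:trotter}. The BCH cross-terms that plague your single-product version simply never appear, because Trotter--Kato operates on the sum of generators, not on a product of their exponentials. In short: reinsert the $n$-fold Trotter splitting before taking the $\lambda\to\infty$ (equivalently $\tau\to 0$) limit, and the rest of your reasoning---the conjugation identity, the identification $\mathcal{L}_{\phi_{\nabla\varphi}^{1}}=e^{\mathcal{A}}$, and the use of Theorem~\ref{thm:DN-DP} plus Lemma~\ref{lem:reachable-operators}---goes through.
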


Thanks to Lemma \ref{lem:reachable-operators}, in order to prove Theorem \ref{prop:gradient-control},
it suffices to prove that $\mathcal{L}_P$ is the strong limit of operators that are $L^2$-reachable in arbitrary small time. We use the following convergences
\footnote{The left hand side is $L^2$-approximately reachable in time $\tau^+$ by Theorem \ref{thm:DN-DP} and (an adaptation of) Lemma \ref{lem:reachable-operators}. The Trotter-Kato formula is used in the first convergence.}:
\begin{equation} \label{strong_limits}
\left( 
e^{i \frac{|\nabla \varphi|^2}{4 n\tau}  }
e^{i  \frac{\varphi}{2 \tau} }
e^{i\frac{\tau}{n}(\Delta-V)}
e^{-i \frac{\varphi}{2 \tau} }
\right)^{n}
\quad \underset{n \to \infty}{\longrightarrow} \quad
e^{i \tau (\Delta-V) + \mathcal{T}_f}
\quad \underset{\tau \to 0}{\longrightarrow} \quad
e^{ \mathcal{T}_f }
= \mathcal{L}_{P}
\end{equation}
where
$\mathcal{T}_f = \langle f , \nabla \cdot \rangle + \frac{1}{2} {\rm div}(f)$. A key point is the formulation of $\mathcal{L}_P$ thanks to the group generated by the transport operator $\mathcal{T}_f$ (characteristics method). See Section \ref{sec:gradient} for details.

\bigskip

The second step of our strategy consists in proving that the set of vector fields whose flow define $L^2$-STAR operators has a Lie algebra structure, thus it contains $\text{Lie}(\frak{G})$.

\begin{theorem}\label{prop:lie-algebra}
$\mathfrak{L}:=\{f\in {\rm Vec}(M); \forall t\in \R, \mathcal{L}_{\phi_f^t} \text{ is } L^2\text{-STAR}\}$
is a Lie subalgebra of ${\rm Vec}(M)$.
\end{theorem}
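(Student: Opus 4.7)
The plan is to verify the three Lie subalgebra axioms for $\mathfrak{L}$: closure under scalar multiplication, under addition, and under the Lie bracket. The unifying strategy is to express each target flow-operator $\mathcal{L}_{\phi^t_{\cdot}}$ as a strong-operator limit on $L^2(M,\C)$ of finite compositions of operators of the form $\mathcal{L}_{\phi_f^s}$ and $\mathcal{L}_{\phi_g^s}$ that are already $L^2$-STAR by hypothesis, and then invoke Lemma \ref{lem:reachable-operators}, which says that compositions and strong limits preserve the $L^2$-STAR property.

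Scalar stability is immediate: for $\lambda\in\R$ and $f\in\mathfrak{L}$, the reparametrization $\phi_{\lambda f}^t=\phi_f^{\lambda t}$ gives $\mathcal{L}_{\phi_{\lambda f}^t}=\mathcal{L}_{\phi_f^{\lambda t}}$, which is $L^2$-STAR for every $t\in\R$. For additivity, I would apply the Lie–Trotter product formula for smooth vector fields,
$$\phi_{f+g}^t \;=\; \lim_{n\to\infty} \bigl(\phi_f^{t/n}\circ\phi_g^{t/n}\bigr)^{n},$$
valid locally uniformly on $M$ together with derivatives, and then lift to
$$\mathcal{L}_{\phi_{f+g}^t} \;=\; \text{s-}\lim_{n\to\infty}\bigl(\mathcal{L}_{\phi_f^{t/n}}\,\mathcal{L}_{\phi_g^{t/n}}\bigr)^{n}.$$
For the bracket I would use the classical commutator formula: for $t\ge 0$ and $s_n:=\sqrt{t/n}$,
$$\phi_{[f,g]}^t \;=\; \lim_{n\to\infty} \bigl(\phi_g^{-s_n}\circ\phi_f^{-s_n}\circ\phi_g^{s_n}\circ\phi_f^{s_n}\bigr)^{n},$$
with the case $t<0$ reduced via $[f,g]=-[g,f]$. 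In both cases every inner factor is $L^2$-STAR, the composition of the $2n$ (resp.\ $4n$) factors is $L^2$-STAR by Lemma \ref{lem:reachable-operators}, and so is the strong limit.

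The main obstacle is precisely the lifting step: turning the classical $C^k$-local convergence of the diffeomorphisms $\phi_n$ produced by the Trotter/commutator schemes into strong convergence $\mathcal{L}_{\phi_n}\to\mathcal{L}_\phi$ of the associated unitaries on $L^2(M,\C)$. Since $\|\mathcal{L}_{\phi_n}\|=1$, a standard density argument reduces this to checking, for $\psi\in C_c^\infty(M,\C)$, the convergence $|J_{\phi_n}|^{1/2}(\psi\circ\phi_n)\to |J_{\phi}|^{1/2}(\psi\circ\phi)$ in $L^2(M)$, which follows by dominated convergence from $C^1$-local-uniform convergence of $\phi_n$ near $\mathrm{supp}(\psi\circ\phi)$ together with a locally uniform bound on the Jacobians. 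Obtaining these regularity bounds for the iterated compositions appearing in both approximation schemes requires Grönwall-type estimates, which is where the globally Lipschitz (and smoothness) assumption built into ${\rm Vec}(M)$ does the essential work.
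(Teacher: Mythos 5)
Your approach is genuinely different from the paper's: you work at the level of diffeomorphisms of $M$ and lift classical flow formulas to the operators $\mathcal{L}_P$, while the paper stays at the operator level throughout, writing $\mathcal{L}_{\phi_f^t}=e^{t\mathcal{T}_f}$ with $\mathcal{T}_f$ the transport generator (Lemma~\ref{Lem:op_transp_Tore}) and invoking the operator-theoretic Trotter--Kato formula (Proposition~\ref{prop:trotter-kato}) and the convergence-of-generators criterion (Proposition~\ref{prop:trotter}).  For the scalar and sum steps your argument is sound and morally equivalent to the paper's: the Trotter formula for flows, with its standard Grönwall lifting (total flow time $2t$, so Jacobians are uniformly bounded), does the same work as the paper's application of Trotter--Kato to $\mathcal{T}_f$ and $\mathcal{T}_g$.

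The Lie bracket step, however, has a genuine gap.  You invoke the commutator-product formula $\phi_{[f,g]}^t=\lim_{n\to\infty}\bigl(\phi_g^{-s_n}\circ\phi_f^{-s_n}\circ\phi_g^{s_n}\circ\phi_f^{s_n}\bigr)^n$ with $s_n=\sqrt{t/n}$ as if it were classical and the lifting as if it were routine Grönwall.  But the iterated composition in this scheme traverses a total flow-time $4n\,s_n=4\sqrt{tn}\to\infty$, so a naive Grönwall estimate on the Jacobians gives a bound of order $e^{C\sqrt{tn}}$, which diverges.  To obtain the uniform $C^1$ control on $\mathrm{supp}(\psi\circ\phi)$ that your dominated-convergence lifting needs, you must exploit the cancellation in the commutator — i.e.\ prove a refined estimate of the form $D(\phi_g^{-s}\circ\phi_f^{-s}\circ\phi_g^{s}\circ\phi_f^{s})=I+s^2M(s,\cdot)$ with $M$ uniformly bounded — and then show the resulting $n$-fold product stays bounded.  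This is a Chernoff-type argument, substantially harder than the Trotter case, and it is precisely the step your proof sketch glosses over.  The paper sidesteps this entirely by using a two-parameter limit: for fixed $t$ it applies Trotter--Kato to the pair $-\tfrac{1}{t}\mathcal{T}_f$ and $e^{-t\mathcal{T}_g}\tfrac{1}{t}\mathcal{T}_f\,e^{t\mathcal{T}_g}$ (obtaining a single exponential $L_t$ of a conjugated generator), and then lets $t\to 0$ using a first-order Taylor expansion of the conjugation (Lemma~\ref{Lem:crochet}) together with Proposition~\ref{prop:trotter}.  In the paper's scheme the factors appearing at fixed $t$ have bounded total flow-time in each variable, so the delicate cancellation estimate is never needed.
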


Thanks to Lemma \ref{lem:reachable-operators}, 
in order to prove Theorem \ref{prop:lie-algebra},
it suffices to prove that, if $f,g \in \frak{L}$ and $P:=\phi_{[f,g]}^1$ then $\mathcal{L}_{P}$ is the strong limit of $L^2$-STAR operators. We use the following convergences \footnote{The left hand side is $L^2$-STAR by Lemma \ref{lem:reachable-operators}. The Trotter Kato formula is used in the first convergence.}
\begin{equation}
\left(e^{\frac{-1}{tn}\mathcal{T}_f}e^{-t\mathcal{T}_g}  e^{\frac{1}{tn}\mathcal{T}_f} e^{t\mathcal{T}_g}  \right)^n
\quad \underset{n \to \infty}{\longrightarrow } \quad 
\exp\left(\frac{-1}{t}\mathcal{T}_f+ e^{-t\mathcal{T}_g}  \frac{1}{t}\mathcal{T}_f e^{t\mathcal{T}_g} \right)
\quad \underset{t \to 0}{\longrightarrow } \quad
e^{\mathcal{T}_{[f,g]}} = \mathcal{L}_P,
\end{equation}
see Section \ref{sec:lie-algebra} for details.

\bigskip

The third and last step of our strategy consists in proving that $\text{Lie}(\frak{G})$ is dense in an appropriate sense.

\begin{theorem}\label{thm:gradient-algebra}
Let $M=\T^d$ or $\R^d$. For every $f \in {\rm Vec}_c(M)$, there exists $(f_n)_{n\in\N} \subset \text{Lie}(\frak{G})$ such that $\mathcal{L}_{\phi_f^1}$ is the strong limit of $(\mathcal{L}_{\phi_{f_n}^1})_{n\in\N}$.
\end{theorem}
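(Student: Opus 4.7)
The plan is to reduce Theorem \ref{thm:gradient-algebra} to a density result for $\text{Lie}(\frak{G})$ in ${\rm Vec}_c(M)$, in a topology strong enough to ensure convergence of flows and hence strong convergence of the induced composition operators. This density is then established by explicit bracket computations on trigonometric generators for $\T^d$ and on Schwartz generators for $\R^d$.

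\textbf{Step 1 (from convergence of vector fields to strong convergence of $\mathcal{L}_{\phi^1}$).} First I would show that if $(g_n)\subset {\rm Vec}(M)$ satisfies $g_n \to g$ in $W^{1,\infty}$ on a fixed neighbourhood $U$ of $\text{supp}(g)$, with a uniform Lipschitz bound, then a Gronwall argument applied to $\dot x = g_n(x)$ gives $\phi_{g_n}^1 \to \phi_g^1$ and $D\phi_{g_n}^1 \to D\phi_g^1$ uniformly on compact sets. For $\psi \in C_c^\infty(M)$, a change of variables and dominated convergence then yield $\|\mathcal{L}_{\phi_{g_n}^1}\psi - \mathcal{L}_{\phi_g^1}\psi\|_{L^2}\to 0$; density of $C_c^\infty$ in $L^2(M,\C)$ together with the fact that each $\mathcal{L}_P$ is unitary upgrades this to strong convergence on all of $L^2$.

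\textbf{Step 2 (density of $\text{Lie}(\frak{G})$).} On $\T^d$, $\frak{G}$ is the space of gradients of smooth functions. For $\varphi_k(x) = e^{ik\cdot x}$, a direct computation yields
\[ [\nabla \varphi_k, \nabla \varphi_l] = -i(k\cdot l)(l-k)\, e^{i(k+l)\cdot x}. \]
Fixing $m = k+l \in \Z^d$ and varying $k \in \Z^d$, the directions $l - k = m - 2k$ span $\R^d$, with $k \cdot l = k\cdot m - |k|^2 \ne 0$ generically. Working with real and imaginary parts of such brackets, linear combinations inside the Lie algebra realize every Fourier mode $v\, e^{im \cdot x}$ with $v \in \R^d$, $m \in \Z^d$. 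Truncating the Fourier expansion of any $f \in {\rm Vec}(\T^d) = {\rm Vec}_c(\T^d)$ then yields $(f_n)\subset \text{Lie}(\frak{G})$ converging to $f$ in $C^\infty$, which is more than enough for Step 1. On $\R^d$, $\frak{G}$ contains the constants $e_j = \nabla x_j$ and gradients of Schwartz functions; bracketing Schwartz wave packets $\chi(x) e^{ik\cdot x}$ plays the role of the torus Fourier computation, and a cutoff/mollification argument produces $(f_n)\subset\text{Lie}(\frak{G})$ that approximates any $f\in {\rm Vec}_c(\R^d)$ in $W^{1,\infty}$ on a neighbourhood of $\text{supp}(f)$, at which point Step 1 applies.

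\textbf{Main obstacle.} The hardest step is Step 2 on $\R^d$. On $\T^d$ the computation is clean Fourier analysis and the ODE/operator passage is routine, but on $\R^d$ the Lie brackets of elements of $\frak{G}$ do not preserve compact support, and the integrability/regularity conditions built into the definition of $\frak{G}$ interact non-trivially with the bracket combinatorics. Arranging the localization so as to produce approximants whose flows agree with $\phi_f^1$ near $\text{supp}(f)$, while retaining enough freedom in direction and frequency to span a dense subspace of ${\rm Vec}_c(\R^d)$, is where the bulk of the technical effort will go.
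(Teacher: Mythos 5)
Your overall skeleton matches the paper's: first reduce the theorem to density of $\text{Lie}(\frak{G})$ in a strong enough topology, then run a Gronwall argument plus dominated convergence to convert uniform convergence of flows into strong convergence of the operators $\mathcal{L}_{\phi^1}$. Your $\T^d$ computation with complex exponentials and the identity $[\nabla e^{ik\cdot x},\nabla e^{il\cdot x}]=-i(k\cdot l)(l-k)e^{i(k+l)\cdot x}$ is a legitimate variant of the paper's $\cos/\sin$ bracket computations and would carry through, although the ``span $\R^d$'' claim for fixed $m=k+l$ needs an explicit choice (e.g.\ $k=Ne_j$ for $N$ large, to avoid the degenerate cases $k\cdot l=0$ and $m-2k=0$) rather than the word ``generically.''

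The genuine gap is the $\R^d$ case, and you flag it yourself: Step 2 on $\R^d$ is announced but not carried out. ``Bracketing Schwartz wave packets $\chi(x)e^{ik\cdot x}$'' is not analogous to the torus Fourier calculation, because the brackets of $\nabla(\chi\cos(k\cdot x))$ and $\nabla(\chi\sin(l\cdot x))$ produce terms involving $\nabla\chi$ and $\chi\nabla\chi$ that do not close up into a manageable family, and it is not clear how the ensuing ``cutoff/mollification'' would land back inside $\text{Lie}(\frak{G})$ (brackets leave $\frak{G}$, so one cannot freely mollify afterwards). The paper's proof instead exhibits a completely explicit generating set: brackets of $\nabla(x_je^{-|x|^2/4})$, $\nabla e^{-|x|^2/4}$, $\nabla(\tfrac{x_j^2}{2}e^{-|x|^2/4})$ and the constants $e_j$ produce $e^{-|x|^2/2}e_j$ after linear combinations, and repeated $\mathrm{ad}_{e_k}$ then yields all Hermite-function coefficients $\partial_x^ne^{-|x|^2/2}e_j$, whose density in $H^s(\R^d)$ (hence in $W^{1,\infty}$ by Sobolev embedding) is classical. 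Without something of this concreteness, your $\R^d$ argument does not exist yet.

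A secondary but real issue is the localization in your Step 1: you only ask for $W^{1,\infty}$ approximation of $f$ ``on a neighbourhood of $\text{supp}(f)$.'' This is not enough. Since $\mathcal{L}_{\phi_{f_n}^1}$ acts on all of $L^2(M,\C)$, you need the flows $\phi_{f_n}^1$ to be uniformly close to the identity away from $\text{supp}(f)$ as well, i.e.\ you need $\|f-f_n\|_{W^{1,\infty}(\R^d)}\to 0$ globally (which is what the paper uses, and which happens automatically because the Hermite approximants are small at infinity). As stated, your Step 1 would let $\phi_{f_n}^1$ move test functions supported far from $\text{supp}(f)$, breaking strong convergence.
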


The proof consists in building explicit elements of $\text{Lie}(\frak{G})$, such as trigonometric functions on $M=\T^d$ or Hermite functions on $M=\R^d$, which are known to be dense in appropriate functional spaces, see Section \ref{sec:gradient-algebra} for details.

Then, by Lemma \ref{lem:reachable-operators}, we conclude that 
${\rm Vec}_c(M) \subset \frak{L}$, which is a reformulation of the control of flows. Thus, Theorem \ref{thm:vector-control} is a consequence of Theorems \ref{prop:gradient-control}, \ref{prop:lie-algebra} and \ref{thm:gradient-algebra}. 

Finally, in order to get Theorems \ref{Main_Thm_torus} and \ref{thm:global-euclidean}, it suffices to prove Theorems \ref{thm:DN-DP}, \ref{prop:gradient-control}, \ref{prop:lie-algebra} and \ref{thm:gradient-algebra}.

\bigskip

In Section \ref{sec:preliminaries}, we recall functional analytic tools extensively used in this article.
In Section \ref{sec:phase} (resp. \ref{sec:gradient}, resp. \ref{sec:lie-algebra}, resp. \ref{sec:gradient-algebra})
we prove Theorem \ref{thm:DN-DP} (resp. \ref{prop:gradient-control}, resp. \ref{prop:lie-algebra}, resp. \ref{thm:gradient-algebra}).

%%%%%%%%%%%%%%%%%%%%%%%%%%%%%%%%%%%%%%%%%%%%%%%%%%%
\section{Functional analytic tools}\label{sec:preliminaries}

In this section, we recall three classical tools of functional analysis that we use extensively in this article.
The first one allows to study the convergence of the propagators at the level of the generators, the second one is the Trotter-Kato formula and the last one is a conjugation formula proved in \cite[Proposition 11]{beauchard-pozzoli}.

\begin{proposition}{\cite[Theorem VIII.21 \& Theorem VIII.25(a)]{rs1}}\label{prop:trotter}
Let $(A_n)_{n\in\N},A$ be self-adjoint operators on a Hilbert space $\mathcal{H}$, with a common core $D$. If $\| (A_n-A)\psi \|_{\mathcal{H}} \underset{n \to\infty}{\longrightarrow} 0$ for any $\psi\in D$, then 
$\| (e^{iA_n}-e^{iA})\psi \|_{\mathcal{H}}\underset{n \to\infty}{\longrightarrow} 0$ for any $\psi\in \mathcal{H}$.
\end{proposition}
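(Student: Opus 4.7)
The plan is to follow the classical two-step route from Reed--Simon: first promote the core-convergence hypothesis to strong resolvent convergence of $A_n$ to $A$, then upgrade this to strong convergence of the unitary groups via the continuous functional calculus applied to the bounded continuous function $x\mapsto e^{ix}$.

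For step one, I would exploit that $D$ being a core for $A$ makes $A|_D$ essentially self-adjoint, hence $(A-i)(D)$ is dense in $\mathcal{H}$. For any $\phi=(A-i)\psi$ with $\psi\in D$, the second resolvent identity gives
\[
\bigl[(A_n-i)^{-1}-(A-i)^{-1}\bigr]\phi = (A_n-i)^{-1}(A-A_n)\psi,
\]
which tends to $0$ in $\mathcal{H}$ by the uniform bound $\|(A_n-i)^{-1}\|\le 1$ together with the hypothesis $\|(A_n-A)\psi\|\to 0$. A standard density/$\varepsilon$ argument, based on this uniform bound, extends strong convergence of the resolvents from the dense subspace $(A-i)(D)$ to all of $\mathcal{H}$; taking adjoints yields the analogous statement for $(A_n+i)^{-1}$.

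For step two, I would consider
\[
\mathcal{F}:=\{f\in C_b(\R) : f(A_n)\psi\to f(A)\psi \text{ for all } \psi\in\mathcal{H}\},
\]
which is a subalgebra of $C_b(\R)$ closed under uniform convergence. By step one, the resolvent functions $r_{\pm i}(x):=(x\mp i)^{-1}$ belong to $\mathcal{F}$, and the $*$-algebra they generate inside $C_0(\R)$ separates points and vanishes nowhere, so Stone--Weierstrass yields $C_0(\R)\subset\mathcal{F}$. To reach $e^{i\cdot}\in C_b\setminus C_0$, I would pick $\chi_R\in C_c^\infty(\R)$ with $0\le\chi_R\le 1$ and $\chi_R\equiv 1$ on $[-R,R]$ and, using that the propagators are isometries, estimate
\[
\|(e^{iA_n}-e^{iA})\psi\|\le \bigl\|[(e^{i\cdot}\chi_R)(A_n)-(e^{i\cdot}\chi_R)(A)]\psi\bigr\| + \|(1-\chi_R)(A_n)\psi\| + \|(1-\chi_R)(A)\psi\|.
\]
The first term vanishes as $n\to\infty$ for fixed $R$ (since $e^{i\cdot}\chi_R\in C_c\subset\mathcal{F}$), and the last term vanishes as $R\to\infty$ by the spectral theorem applied to $A$ alone.

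The main obstacle is the uniform-in-$n$ control of the middle tail term $\|(1-\chi_R)(A_n)\psi\|$: strong resolvent convergence does not a priori prevent the spectral mass of $\psi$ with respect to $A_n$ from escaping to infinity. I would handle it by establishing tightness of the family of spectral measures $\mu_{n,\psi}(E):=\langle \psi,\mathbf{1}_E(A_n)\psi\rangle$. The inclusion $C_0(\R)\subset\mathcal{F}$ already obtained means $\int f\,d\mu_{n,\psi}\to\int f\,d\mu_{\infty,\psi}$ for every $f\in C_0(\R)$, i.e.\ $\mu_{n,\psi}\to\mu_{\infty,\psi}$ vaguely; since the total mass $\mu_{n,\psi}(\R)=\|\psi\|^2=\mu_{\infty,\psi}(\R)$ is preserved, vague convergence upgrades to weak convergence of finite measures, which is equivalent to tightness of $\{\mu_{n,\psi}\}$. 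Hence for every $\varepsilon>0$ there exists $R$ independent of $n$ with $\|(1-\chi_R)(A_n)\psi\|^2\le \mu_{n,\psi}(\{|\lambda|>R\})<\varepsilon$ and simultaneously $\|(1-\chi_R)(A)\psi\|<\sqrt{\varepsilon}$. Fixing such $R$ and sending $n\to\infty$ makes the first term vanish, so $\limsup_n \|(e^{iA_n}-e^{iA})\psi\|\le 2\sqrt{\varepsilon}$, and the arbitrariness of $\varepsilon$ completes the proof.
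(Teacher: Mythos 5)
Your argument is correct and follows the route of the two Reed--Simon results the proposition cites: Step~1 reproduces Theorem~VIII.25(a) (core convergence $\Rightarrow$ strong resolvent convergence, via the second resolvent identity, the uniform bound $\|(A_n-i)^{-1}\|\le 1$, and density of $(A-i)D$), and Step~2 is a variant of the VIII.20/VIII.21 argument. Two remarks. First, ``taking adjoints'' is not a legitimate way to obtain $(A_n+i)^{-1}\to(A+i)^{-1}$ strongly---strong operator convergence is not preserved under adjoints in general---but the fix is immediate: rerun Step~1 with $+i$ in place of $-i$, using that $(A+i)D$ is also dense since $D$ is a core for the self-adjoint $A$. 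Second, your tightness argument for the tail is sound (vague convergence of $\mu_{n,\psi}$ together with conservation of the total mass $\|\psi\|^2$ does upgrade to weak convergence, hence uniform smallness of the tails), but it is more elaborate than the standard sandwich: once you have $C_0(\R)\subset\mathcal{F}$, note that $g(x):=e^{ix}(x-i)^{-1}$ belongs to $C_0(\R)$, and for $\psi\in D(A)$ with $\eta:=(A-i)\psi$ write
\[
e^{iA_n}\psi - e^{iA}\psi \;=\; e^{iA_n}\bigl[(A-i)^{-1}-(A_n-i)^{-1}\bigr]\eta \;+\; \bigl[g(A_n)-g(A)\bigr]\eta,
\]
whose first term vanishes by strong resolvent convergence and unitarity of $e^{iA_n}$, and whose second vanishes because $g\in C_0(\R)\subset\mathcal{F}$; density of $D(A)$ and uniform boundedness of $e^{iA_n}-e^{iA}$ then conclude, with no tightness considerations at all. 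So the proof is correct, but the Portmanteau detour in Step~2 can be replaced by this shorter resolvent insertion.
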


\begin{proposition}{(Trotter-Kato product formula) \cite[Theorem VIII.31]{rs1}}\label{prop:trotter-kato}
%{\color{blue}EP: should be just cite this from Reed-Simon book without recalling it?}
Let $A,B$ be self-adjoint operators on a Hilbert space $\mathcal{H}$ such that $A+B$ is essentially self-adjoint on $D(A)\cap D(B)$. Then, for every $\psi_0 \in \mathcal{H}$,
$$ \left\| \left(e^{i\frac{A}{n}}e^{i\frac{B}{n}}\right)^n\psi_0 - e^{i(A+B)}\psi_0\right\|_{\mathcal{H}} \underset{n \to + \infty}{\longrightarrow} 0.$$
\end{proposition}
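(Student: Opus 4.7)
The plan is to reduce the Trotter-Kato product formula to Chernoff's theorem on products of contractions, a classical abstract tool for semigroup convergence. Recall that Chernoff's theorem states: if $F:[0,\infty)\to\mathcal{B}(\mathcal{H})$ is a strongly continuous family of contractions with $F(0)=I$, and if $C$ is a closed densely defined operator whose closure generates a strongly continuous contraction semigroup $(e^{sC})_{s\geq 0}$, with a core $D$ for $C$ on which
\[
\lim_{t\downarrow0}\frac{F(t)\psi-\psi}{t}=C\psi \qquad (\psi\in D),
\]
then $F(1/n)^n\psi\to e^{C}\psi$ strongly for every $\psi\in\mathcal{H}$.

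To apply Chernoff, I would set $F(t):=e^{itA}e^{itB}$ and $C:=i\overline{A+B}$. Strong continuity and contractivity of $F$ follow immediately from Stone's theorem, since each factor $e^{itA}$, $e^{itB}$ is a unitary group. The hypothesis that $A+B$ is essentially self-adjoint on $D:=D(A)\cap D(B)$ guarantees that $\overline{A+B}$ is self-adjoint, that $i\overline{A+B}$ generates the unitary group $e^{is\overline{A+B}}$, and that $D$ is a core for $C$. The derivative condition on $D$ can then be checked via the telescoping decomposition
\[
\frac{F(t)\psi-\psi}{t}-i(A+B)\psi=e^{itA}\!\left(\frac{e^{itB}\psi-\psi}{t}-iB\psi\right)+(e^{itA}-I)iB\psi+\left(\frac{e^{itA}\psi-\psi}{t}-iA\psi\right);
\]
each term tends to zero strongly as $t\downarrow 0$ by the definition of the infinitesimal generators of $(e^{itA})$ on $D(A)$ and $(e^{itB})$ on $D(B)$, together with strong continuity of $e^{itA}$ at $0$ for the middle summand. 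Chernoff's theorem then yields $F(1/n)^n\psi\to e^{i\overline{A+B}}\psi$, which is precisely the displayed limit.

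The main obstacle, conceptually, is Chernoff's theorem itself, whose proof rests on the Hille-Yosida resolvent representation $e^{sC}\psi=\lim_{n}(I-sC/n)^{-n}\psi$ combined with a telescoping estimate of the form
\[
F(1/n)^{n}\psi-e^{C}\psi=\sum_{k=0}^{n-1}F(1/n)^{n-1-k}\bigl(F(1/n)-e^{C/n}\bigr)e^{kC/n}\psi
\]
and a uniform bound on the differences $F(1/n)-e^{C/n}$. Since this result is available as a black box from Reed-Simon, the only non-trivial verification needed for the present proposition is the derivative calculation sketched above, for which no further input beyond the self-adjointness of $A$, $B$ and the essential self-adjointness of $A+B$ on $D$ is required.
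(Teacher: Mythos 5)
The paper does not prove this proposition: it is quoted verbatim from Reed--Simon \cite[Theorem VIII.31]{rs1}, so there is no internal proof to compare against. Your argument is correct as written. The telescoping identity
\[
\frac{F(t)\psi-\psi}{t}-i(A+B)\psi
= e^{itA}\Bigl(\frac{e^{itB}\psi-\psi}{t}-iB\psi\Bigr)
+(e^{itA}-I)\,iB\psi
+\Bigl(\frac{e^{itA}\psi-\psi}{t}-iA\psi\Bigr)
\]
is exact, each summand vanishes strongly on $D(A)\cap D(B)$ by Stone's theorem (the first because $e^{itA}$ is an isometry, the second by strong continuity at $t=0$), and essential self-adjointness of $A+B$ on $D(A)\cap D(B)$ is precisely what makes that subspace a core for the generator $i\,\overline{A+B}$ of the limiting unitary group, so Chernoff's hypotheses are met. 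One small remark: Chernoff's theorem does not actually require strong continuity of $t\mapsto F(t)$ away from $0$ (only the derivative at $0$ on a core and the uniform contraction bound), so that clause is superfluous, though harmless here since it holds anyway. As for the comparison with the source you are reproving: Reed--Simon's own proof of Theorem VIII.31 is the direct telescoping
\[
F(1/n)^{n}\psi-e^{i(A+B)}\psi
=\sum_{k=0}^{n-1}F(1/n)^{n-1-k}\bigl(F(1/n)-e^{i(A+B)/n}\bigr)e^{ik(A+B)/n}\psi
\]
together with a uniform estimate on the inner difference over the compact orbit $\{e^{is(A+B)}\psi : s\in[0,1]\}$; this is exactly the mechanism inside Chernoff's theorem, so you have not found a genuinely different proof but rather factored the same argument through a cleaner abstract black box. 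That is a perfectly legitimate choice and arguably the more modular exposition.
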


\begin{proposition}\label{lem:conjugation}
Let $A,B$ be self-adjoint operators on a Hilbert space $\mathcal{H}$, and suppose that $e^{iB}$ is an isomorphism of $D$, where $D$ is a core for $A$. Then, for any $t\in \R$,
$$e^{-iB}e^{itA}e^{iB} =\exp(e^{-iB}itAe^{iB}).$$
\end{proposition}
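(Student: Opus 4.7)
The plan is to identify both sides as the $C^0$-unitary group with a common self-adjoint generator, using Stone's theorem together with the fact that unitary equivalence preserves essential self-adjointness.

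First I would set $U(t):=e^{-iB}e^{itA}e^{iB}$ and check the easy structural properties: $U$ is a strongly continuous one-parameter unitary group on $\mathcal{H}$. The group property $U(t+s)=U(t)U(s)$ follows from $e^{iB}e^{-iB}={\rm Id}$, and strong continuity is inherited from that of $e^{itA}$ because $e^{\pm iB}$ are isometries. By Stone's theorem, there exists a unique self-adjoint operator $C$ on $\mathcal{H}$ with $U(t)=e^{itC}$, whose domain is described by $\psi\in D(C)$ iff $\lim_{t\to 0}t^{-1}(U(t)\psi-\psi)$ exists.

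Next I would identify $C$ on the hypothesized core $D$. For $\psi\in D$, the assumption that $e^{iB}$ is an isomorphism of $D$ gives $e^{iB}\psi\in D\subset D(A)$, so $t\mapsto e^{itA}e^{iB}\psi$ is differentiable at $0$ with derivative $iAe^{iB}\psi$. Applying the bounded operator $e^{-iB}$,
\begin{equation*}
\lim_{t\to 0}\frac{U(t)\psi-\psi}{t}=e^{-iB}iAe^{iB}\psi=i\,(e^{-iB}Ae^{iB})\psi,
\end{equation*}
which proves $D\subset D(C)$ and $C\psi=(e^{-iB}Ae^{iB})\psi$ for $\psi\in D$. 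Thus $C$ extends the symmetric operator $S:=(e^{-iB}Ae^{iB})\vert_D$.

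The main point — and the one that makes the argument work — is then to show that $S$ is essentially self-adjoint with $\overline{S}=C$. Here I would use unitary equivalence: the operator $S$ is obtained from the restriction $A\vert_D$ by conjugation with the unitary $e^{iB}$, which, by hypothesis, maps $D$ bijectively onto itself. Since $D$ is a core for $A$, $A\vert_D$ is essentially self-adjoint with closure $A$; essential self-adjointness being preserved under unitary conjugation, $S$ is essentially self-adjoint and $\overline{S}=e^{-iB}Ae^{iB}$ (with domain $e^{-iB}D(A)$). Two self-adjoint operators $\overline{S}$ and $C$ with $\overline{S}\subset C$ must coincide by maximality, hence $C=e^{-iB}Ae^{iB}$. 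Plugging back into Stone gives $e^{-iB}e^{itA}e^{iB}=e^{itC}=\exp(it\,e^{-iB}Ae^{iB})=\exp(e^{-iB}\,itA\,e^{iB})$, which is the claimed identity.

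The only subtle step is the essential self-adjointness of $S$ on $D$: one must read the hypothesis ``$e^{iB}$ is an isomorphism of $D$'' as $e^{iB}D=D$ so that conjugation by $e^{iB}$ transports the core $D$ of $A$ into a core of the conjugate operator. Everything else is a routine application of Stone's theorem and of the fact that a self-adjoint operator admits no proper self-adjoint extension.
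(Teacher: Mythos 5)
Your proof is correct, and the route is the natural one for this kind of statement: identify $e^{-iB}e^{itA}e^{iB}$ as a strongly continuous unitary group, compute its Stone generator on the invariant core $D$, and invoke essential self-adjointness of $A|_D$ transported by the unitary $e^{iB}$ together with maximality of self-adjoint operators. The paper itself does not reproduce the argument but delegates it to \cite[Proposition~11]{beauchard-pozzoli}, so there is no in-text proof to compare against; your write-up supplies exactly what is needed, and you correctly flag the one genuinely necessary hypothesis (that $e^{iB}$ maps $D$ \emph{onto} $D$, so that $e^{\pm iB}D=D$ and the restriction $e^{-iB}Ae^{iB}\vert_D$ is simultaneously well-defined and essentially self-adjoint with closure $e^{-iB}Ae^{iB}$ on $e^{-iB}D(A)$). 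One small remark: if one only wanted the identity with $e^{-iB}Ae^{iB}$ understood on its natural self-adjoint domain $e^{-iB}D(A)$, the hypothesis on $D$ would be unnecessary; the role of the core assumption is precisely to guarantee, as you do in your last step, that the conjugated operator restricted to $D$ already determines the group, which is what makes the formula usable in the later computations of the paper where $e^{-iB}Ae^{iB}$ is first written out on $C^\infty_c$.
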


%%%%%%%%%%%%%%%%%%%%%%%%%%%%%%%%%%%%%%%%%%%%%%%%%%%%%%%%%%%%%%%%%%%
\section{Control of phases} \label{sec:phase}

In this section, we write a proof of Theorem \ref{thm:DN-DP}.
For system \eqref{eq:torus} on  $\T^d$ it is borrowed from  \cite[Theorem A]{duca-nersesyan} and given for sake of completeness. For system \eqref{eq:line} on $\R^d$, it is an adaptation to our framework of the one of \cite[Theorem 1]{duca-pozzoli} for another system.

%---------------------------------------------------------------
\subsection{On $\T^d$}

\begin{proposition} \label{Prop:DN}
Let $V \in L^\infty(\T^d,\R)$.
System \eqref{eq:torus} satisfies the following property:
for every $\varphi \in L^2(\T^d,\R)$,
the operator
$e^{i \varphi}$ is $L^2$-STAR.
\end{proposition}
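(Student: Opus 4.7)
The plan is to introduce the set $\mathcal{A} := \{\varphi \in L^2(\T^d,\R) \mid e^{i\varphi} \text{ is } L^2\text{-STAR}\}$ and to prove $\mathcal{A} = L^2(\T^d,\R)$. The factorization $e^{i(\varphi_1+\varphi_2)} = e^{i\varphi_1}e^{i\varphi_2}$ combined with Lemma \ref{lem:reachable-operators} makes $\mathcal{A}$ stable under sums; and a subsequence/dominated-convergence argument shows that $\varphi_n \to \varphi$ in $L^2(\T^d,\R)$ implies $e^{i\varphi_n} \to e^{i\varphi}$ strongly on $L^2(\T^d,\C)$, so $\mathcal{A}$ is also closed in $L^2(\T^d,\R)$. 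Since trigonometric polynomials are dense in $L^2(\T^d,\R)$, it will suffice to produce inside $\mathcal{A}$ all real multiples of $\sin\langle k,x\rangle$ and $\cos\langle k,x\rangle$ for $k\in\Z^d$.

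For the base case, I would apply the constant control $u_{2j}(t) \equiv \alpha/T$ on $[0,T]$ with the others set to zero: the propagator is $e^{-i(TH_0+\alpha\cos\langle b_j,x\rangle)}$. As $T\to 0$, the self-adjoint generator $TH_0 + \alpha\cos\langle b_j,x\rangle$ converges strongly on the common core $H^2(\T^d)$ to the bounded multiplication operator $\alpha\cos\langle b_j,x\rangle$, so Proposition \ref{prop:trotter} produces $e^{-i\alpha\cos\langle b_j,x\rangle}$ as the strong limit; symmetrically with $u_{2j-1}$ one gets $e^{-i\alpha\sin\langle b_j,x\rangle}$. Hence all real multiples of $\sin\langle b_j,x\rangle$ and $\cos\langle b_j,x\rangle$ lie in $\mathcal{A}$ for $j=1,\dots,d$.

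The key enlargement step is to show that, whenever $\varphi\in C^\infty(\T^d)$ satisfies $\R\varphi\subset\mathcal{A}$, then $s|\nabla\varphi|^2\in\mathcal{A}$ for every $s\in\R$; by polarization this then delivers all $s\nabla\varphi_1\cdot\nabla\varphi_2$ inside $\mathcal{A}$ when $\varphi_1,\varphi_2$ have the same stability. For $\alpha>0$ and $T=s/\alpha^2$, Proposition \ref{lem:conjugation} yields
\[
e^{-iTH_0^{-\alpha\varphi}} \;=\; e^{i\alpha\varphi}\,e^{-iTH_0}\,e^{-i\alpha\varphi}, \qquad H_0^{-\alpha\varphi}:=e^{i\alpha\varphi}H_0 e^{-i\alpha\varphi},
\]
which is a composition of a trajectory approximating $e^{-i\alpha\varphi}$ in arbitrarily small time (guaranteed by $-\alpha\varphi\in\mathcal{A}$), the free evolution of duration $T=s/\alpha^2$, and a trajectory approximating $e^{i\alpha\varphi}$; hence $e^{-iTH_0^{-\alpha\varphi}}$ is $L^2$-STAR. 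A direct computation then gives
\[
TH_0^{-\alpha\varphi} \;=\; \tfrac{s}{\alpha^2}H_0 \;+\; \tfrac{2is}{\alpha}\nabla\varphi\cdot\nabla \;+\; \tfrac{is}{\alpha}\Delta\varphi \;+\; s|\nabla\varphi|^2 \;\underset{\alpha\to\infty}{\longrightarrow}\; s|\nabla\varphi|^2 \quad\text{strongly on } H^2(\T^d),
\]
so Proposition \ref{prop:trotter} provides $e^{-is|\nabla\varphi|^2}$ as a strong limit of $L^2$-STAR operators. Lemma \ref{lem:reachable-operators} concludes $s|\nabla\varphi|^2\in\mathcal{A}$, and $\R\cdot s|\nabla\varphi|^2\subset\mathcal{A}$ since $s\in\R$ was arbitrary.

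To conclude, I iterate: $\nabla\sin\langle b_i,x\rangle = b_i\cos\langle b_i,x\rangle$ together with product-to-sum formulas and the enlargement step produce $\sin\langle b_i\pm b_j,x\rangle$ and $\cos\langle b_i\pm b_j,x\rangle$ (with all real multiples) in $\mathcal{A}$. Thanks to the choice (\ref{Def:ej}) (in particular $e_d = b_d - b_1 - \cdots - b_{d-1}$), the $\Z$-module generated by $\{b_1,\dots,b_d\}$ is all of $\Z^d$, so iteration reaches every Fourier mode; density of trigonometric polynomials and $L^2$-closure of $\mathcal{A}$ then give $\mathcal{A} = L^2(\T^d,\R)$. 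The main technical obstacle is the functional-analytic bookkeeping in the enlargement step: verifying that $H^2(\T^d)$ is a common core for the family $\{TH_0^{-\alpha\varphi}\}_\alpha$ and for the bounded limit $s|\nabla\varphi|^2$, and that the first-order corrections $\alpha^{-1}\nabla\varphi\cdot\nabla$ and $\alpha^{-1}\Delta\varphi$ indeed vanish strongly on that core; on the compact torus with smooth coefficients this reduces to standard commutator estimates between $\Delta$ and smooth multiplication operators.
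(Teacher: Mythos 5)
Your approach is essentially the paper's: identify the set of admissible phases $\mathcal{A}$, establish closure under sums and under strong/$L^2$-limits via Lemma \ref{lem:reachable-operators}, seed $\mathcal{A}$ with $\R\cos\langle b_j,x\rangle$ and $\R\sin\langle b_j,x\rangle$ via constant controls and Proposition \ref{prop:trotter}, enlarge via the conjugation $e^{i\alpha\varphi}e^{-iTH_0}e^{-i\alpha\varphi}$ and Propositions \ref{lem:conjugation}, \ref{prop:trotter}, and conclude by density of trigonometric polynomials. This is exactly the paper's four-step proof.

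However, there is a genuine gap in the enlargement step. You take $T=s/\alpha^2$ and let $\alpha\to\infty$ to generate $e^{-is|\nabla\varphi|^2}$ as a strong limit, and then assert that "$s|\nabla\varphi|^2\in\mathcal{A}$ for every $s\in\R$". But the free-evolution leg $e^{-iTH_0}$ requires $T\geq 0$, so $s\geq 0$, and the limit operator being $e^{-is|\nabla\varphi|^2}$ means you obtain only the \emph{nonpositive} multiples $-s|\nabla\varphi|^2\in\mathcal{A}$, $s\geq 0$. Consequently, your polarization claim fails: $\nabla\varphi_1\cdot\nabla\varphi_2 = \tfrac14\left(|\nabla(\varphi_1+\varphi_2)|^2 - |\nabla(\varphi_1-\varphi_2)|^2\right)$ requires \emph{subtracting} one quadratic term from another, whereas $\mathcal{A}$ is only known to be stable under addition (Lemma \ref{lem:reachable-operators} gives a semigroup, not a group). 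The paper is careful here — $\mathcal{H}_j$ in its Step 3 consists of $\varphi_0-\sum_k|\nabla\varphi_k|^2$ with all minus signs — and the nontrivial fact that this cone generates a dense subspace is outsourced to \cite[Proposition 2.6]{duca-nersesyan}, which carefully exploits the specific trigonometric identities ($\cos^2+\sin^2=1$, product-to-sum, and the structure of the $b_j$'s) to manufacture both signs of each Fourier mode within the restricted form. Your final iteration ("product-to-sum formulas and the enlargement step produce ... every Fourier mode") is the right idea but is not substantiated as written, and your framing of the "main technical obstacle" as the functional-analytic core bookkeeping rather than this combinatorial sign issue is misplaced — the core arguments are routine, the density of the one-sided cone $\mathcal{H}_\infty$ is the substantive content.
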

\begin{proof}
\noindent \emph{Step 1: We prove that, for every $\varphi \in \mathcal{H}_0:=\text{span}_{\R}\{\sin \langle b_j,x\rangle, \cos \langle b_j , x \rangle; j \in \{1,\dots,d\} \}$, the operator $e^{i \varphi}$ is $L^2$-STAR.}
Let $\alpha \in \R^{2d}$ and 
$\varphi: x \in \T^d \mapsto \sum_{j=1}^{d} (\alpha_{2j-1} \sin + \alpha_{2j} \cos) \langle b_j,x\rangle$. For any $\tau>0$, the operator
$L_{\tau}:=e^{ i\tau(\Delta-V) + i\varphi }$
is $L^2$-exactly reachable in time $\tau$,
because associated with the constant controls $u_j(t)=-\alpha_j/\tau$.
For $\tau>0$, the operator $A_{\tau}:=\tau(\Delta-V)+ \varphi$ is self-adjoint on $D(A_{\tau}):=H^2(\T^d,\C)$,  because $V, \varphi \in L^\infty(\T^d,\R)$.
The multiplicative operator $A_0:=\varphi$ is self-adjoint on $L^2(\T^d,\C)$.
$H^2(\T^d,\C)$ is a common core of $A_{\tau}$ and $A_0$. For every $\psi \in H^2(\T^d,\C)$,
$\| (A_{\tau}-A_0)\psi \|_{L^2} =
\tau \|(\Delta-V)\psi\|_{L^2} \rightarrow 0$ as $\tau \to 0$.
Thus, by Proposition \ref{prop:trotter}, for every $\psi \in L^2(\T^d,\C)$,
$\| (L_{\tau}-e^{i\varphi})\psi\|_{L^2}
= \| (e^{i A_{\tau}} - e^{i A_0})\psi \|_{L^2}
\to 0$ as $\tau \to 0$.
By Lemma \ref{lem:reachable-operators}, this proves that 
$e^{i \varphi}$ is $L^2$-STAR.

\medskip

\noindent \emph{Step 2: We prove that, if $\varphi \in C^2(\T^d,\R)$ and $e^{i \lambda \varphi}$ is $L^2$-STAR for every $\lambda \in \R$ then $e^{-i|\nabla\varphi|^2}$ is $L^2$-STAR.}
Let $\tau>0$. By Lemma \ref{lem:reachable-operators}, the operator
$$\widetilde{L}_{\tau}:=
e^{i \frac{\varphi}{\sqrt{\tau}}} 
e^{i\tau(\Delta-V)}
e^{-i \frac{\varphi}{\sqrt{\tau}}}$$
is $L^2$-approximately reachable in time $\tau^+$.

The operator $\tau(\Delta-V)$ is self-adjoint on $H^2(\T^d,\C)$.
The multiplicative operator $\varphi/\sqrt{\tau}$ is self-adjoint on $L^2(\T^d,\C)$. $e^{i\varphi/\sqrt{\tau}}$ is an isomorphism of $H^2(\T^d,\C)$ because $\varphi \in C^2(\T^d,\R)$.
Thus, by Lemma \ref{lem:conjugation} and standard computations
$$\widetilde{L}_{\tau} = \exp \left( i\tau
e^{i \frac{\varphi}{\sqrt{\tau}}} 
(\Delta-V)
e^{-i \frac{\varphi}{\sqrt{\tau}}}  \right)
= \exp \left( i \tau (\Delta-V)-\sqrt{\tau}(2\langle \nabla \varphi , \nabla \rangle + \Delta \varphi) - i |\nabla \varphi|^2 \right).$$
The operator
$$\widetilde{A}_{\tau} := 
\tau
e^{i \frac{\varphi}{\sqrt{\tau}}} 
(\Delta-V)
e^{-i \frac{\varphi}{\sqrt{\tau}}}
=
\tau (\Delta-V)+i\sqrt{\tau}(2\langle \nabla \varphi , \nabla \rangle + \Delta \varphi) - |\nabla \varphi|^2$$
is self-adjoint on $H^2(\T^d,\C)$.
The multiplicative operator 
$\widetilde{A}_0:=- |\nabla \varphi|^2$ 
is self-adjoint on $L^2(\T^d,\C)$ because
$\nabla \varphi \in L^{\infty}(\T^d,\R)$.
$H^2(\T^d,\C)$ is a common core of $\widetilde{A}_{\tau}$ and $\widetilde{A}_0$.
For every $\psi \in H^2(\T^d,\C)$,
$$ \|(\widetilde{A}_{\tau}-\widetilde{A}_0) \psi \|_{L^2} = 
\|\tau (\Delta-V) \psi +i\sqrt{\tau}(2\langle \nabla \varphi , \nabla \rangle  + \Delta \varphi) \psi \|_{L^2} 
\underset{\tau \to 0 }{\longrightarrow} 0.$$
Thus, by Proposition \ref{prop:trotter}, for every $\psi \in L^2(\T^d,\C)$,
$$\| (\widetilde{L}_{\tau} - e^{-i|\nabla \varphi|^2})\psi \|_{L^2} = 
\| (e^{i \widetilde{A}_{\tau}} - e^{i A_0}) \psi \|_{L^2}
\underset{\tau \to 0 }{\longrightarrow} 0.$$
By Lemma \ref{lem:reachable-operators}, this proves that $e^{-i|\nabla \varphi|^2}$ is $L^2$-STAR.

\medskip

\noindent \emph{Step 3: Iteration.} We define by induction an increasing sequence of sets $(\mathcal{H}_{j})_{j\in\N}$: 
$\mathcal{H}_0$ is defined in Step 1 and, for every $j \in \N^*$,
$\mathcal{H}_j$ is the largest vector space whose elements can be written as 
$$  \varphi_0-\sum_{k=1}^N|\nabla\varphi_k|^2 ; N\in\N, \varphi_0,\dots,\varphi_N\in \mathcal{H}_{j-1}. $$
Let $\mathcal{H}_{\infty} := \cup_{j\in\N} \mathcal{H}_j$.
Thanks to Lemma \ref{lem:reachable-operators}, Steps 1 and 2,
for every $\varphi \in \mathcal{H}_{\infty}$, the operator $e^{i \varphi}$ is $L^2$-STAR. Moreover, the proof of \cite[Proposition 2.6]{duca-nersesyan} shows that $\mathcal{H}_{\infty}$ contains any trigonometric polynomial, in particular, $\mathcal{H}_{\infty}$ is dense in $L^2(\T^d,\R)$.

\medskip

\noindent \emph{Step 4: Conclusion.} Let $\varphi \in L^2(\T^d,\R)$. There exists $(\varphi_n)_{n\in\N} \subset \mathcal{H}_{\infty}$ such that $\| \varphi_n -\varphi \|_{L^2} \rightarrow 0 $ as $n \to \infty$. Up to an extraction, one may assume that $\varphi_n \rightarrow  \varphi$ almost everywhere on $\T^d$, as $n \to \infty$. The dominated convergence theorem proves that, for every $\psi \in L^2(\T^d,\C)$,
$\| (e^{i \varphi_n}-e^{i\varphi}) \psi \|_{L^2} \rightarrow 0$ as $n \to \infty$. Finally, Step 3 and Lemma \ref{lem:reachable-operators} prove that the operator $e^{i\varphi}$ is $L^2$-STAR.
\end{proof}

\subsection{On $\R^d$}
\begin{proposition} \label{Prop:DP}
Let $V$ satisfying (\ref{Hyp:V_transp}).
System \eqref{eq:line} satisfies the following properties:
\begin{itemize}
\item for every $j \in \{1,\dots,d\}$ and $u \in \R$, the operator $e^{u \partial_{x_j}}$ is $L^2$-STAR,
\item for every $\varphi \in L^2 (\R^d,\R)$, the operator
$e^{i \varphi}$ is $L^2$-STAR
\end{itemize}
\end{proposition}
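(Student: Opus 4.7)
The plan is to adapt the iterative scheme of Proposition \ref{Prop:DN} to $\R^d$, with one genuinely new step: on $\R^d$ the system offers only the seeds $x_j$ and $e^{-|x|^2/2}$ as phases, so I must first produce the translations $e^{u\partial_{x_j}}$ as $L^2$-STAR operators, and then combine them by conjugation with the gradient-squaring trick to saturate a dense set of phases in $L^2(\R^d,\R)$.

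For the elementary phases $e^{i\alpha x_j}$ and $e^{i\beta e^{-|x|^2/2}}$, the argument is identical to Step 1 of Proposition \ref{Prop:DN}: the constant control $u_j=-\alpha/\tau$ produces the propagator $e^{i\tau(\Delta-V)+i\alpha x_j}$, whose generator $\tau(\Delta-V)+\alpha x_j$ converges on $C_c^\infty(\R^d)$ to $\alpha x_j$ as $\tau\to 0$ (using $V\in L^2_{\rm loc}$ to control $\tau V\psi$ in $L^2$ for compactly supported $\psi$), and Proposition \ref{prop:trotter} concludes. For the translations, I would apply Proposition \ref{lem:conjugation} with $\lambda=u/(2\tau)$: using the identity $e^{i\lambda x_j}\Delta e^{-i\lambda x_j}=\Delta-2i\lambda\partial_{x_j}-\lambda^2$, one obtains
$$
e^{i\lambda x_j}e^{i\tau(\Delta-V)}e^{-i\lambda x_j}
= e^{-iu^2/(4\tau)}\exp\!\bigl(i[\tau\Delta-iu\partial_{x_j}-\tau V]\bigr),
$$
which is $L^2$-reachable in time of order $\tau$ by Lemma \ref{lem:reachable-operators} and the elementary-phase step. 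The first factor is a global phase; the self-adjoint generator $\tau\Delta-iu\partial_{x_j}-\tau V$ converges on $C_c^\infty(\R^d)$ to $-iu\partial_{x_j}$, which is self-adjoint on $H^1(\R^d,\C)$ and generates the translation $\psi\mapsto\psi(\cdot-ue_j)$. Proposition \ref{prop:trotter} then yields the first bullet.

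For the second bullet I iterate the strategy enriched by these translations. Step 2 of Proposition \ref{Prop:DN}, applied with modest regularity checks (one must verify that $e^{i\varphi/\sqrt\tau}$ preserves an appropriate core of $\Delta-V$, which is where a hypothesis like $\nabla\varphi\in W^{1,\infty}\cap L^4$ enters), shows that if $e^{i\lambda\varphi}$ is $L^2$-STAR for all $\lambda\in\R$ then so is $e^{-i|\nabla\varphi|^2}$; moreover, conjugation by translations turns any STAR phase $e^{i\varphi}$ into $e^{i\varphi(\cdot-y)}$ for every $y\in\R^d$. Closing $\{\alpha x_j,\beta e^{-|x|^2/2}\}$ under $\R$-linear combinations, the $|\nabla\cdot|^2$ operation, and translations produces an algebra $\mathcal{H}_\infty\subset L^2(\R^d,\R)$ of phases for which $e^{i\varphi}$ is $L^2$-STAR.

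The main obstacle, and the only substantive departure from the torus case, is the density of $\mathcal{H}_\infty$ in $L^2(\R^d,\R)$: Stone--Weierstrass via trigonometric polynomials is no longer available. Following \cite{duca-pozzoli}, I would exhibit enough translates, products and squared gradients of Gaussians in $\mathcal{H}_\infty$ to recover the Hermite functions (or at least a total subset of $L^2(\R^d,\R)$). Once density is established, Step 4 of Proposition \ref{Prop:DN} applies verbatim: given $\varphi\in L^2(\R^d,\R)$, choose $\varphi_n\in\mathcal{H}_\infty$ with $\varphi_n\to\varphi$ in $L^2$ and a.e.~(after extraction), so that $e^{i\varphi_n}\to e^{i\varphi}$ strongly on $L^2(\R^d,\C)$ by dominated convergence, and conclude by closure of $L^2$-STAR under strong limits (Lemma \ref{lem:reachable-operators}).
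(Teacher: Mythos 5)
Your proposal is on the right track and Steps~1--2 essentially coincide with the paper's (your arithmetic even gives the correct global phase $e^{-iu^2/(4\tau)}$, while the paper's displayed formula has a harmless typo writing $u^2/(2\tau)$). Where you diverge substantively is in the iteration. You propose to carry over Step~2 of Proposition~\ref{Prop:DN} unchanged --- i.e., generate $e^{-i|\nabla\varphi|^2}$ by conjugating $e^{i\tau(\Delta-V)}$ by $e^{i\varphi/\sqrt\tau}$ --- and supplement it with translated phases. The paper does something simpler and cleaner on $\R^d$: once the translations $e^{u\partial_{x_j}}$ are available, it conjugates them by phases, $\widetilde{L}_\tau = e^{i\varphi/\tau}e^{\tau\partial_{x_j}}e^{-i\varphi/\tau}$, which by the method of characteristics is explicitly
\[
\widetilde{L}_\tau\psi(x)=\psi(x+\tau e_j)\,e^{-i\frac{\varphi(x+\tau e_j)-\varphi(x)}{\tau}},
\]
and so tends strongly to $e^{-i\partial_{x_j}\varphi}$ by continuity of translations and dominated convergence. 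This avoids entirely the Trotter/core analysis for $\Delta-V$ with an unbounded first-order perturbation that your $|\nabla\cdot|^2$ route would entail on the non-compact $\R^d$ (essential self-adjointness of $\tau(\Delta-V)+i\sqrt\tau(2\langle\nabla\varphi,\nabla\rangle+\Delta\varphi)-|\nabla\varphi|^2$, verification that $e^{i\varphi/\sqrt\tau}$ is an isomorphism of a common core, etc.), which is precisely the sort of technicality you flag but do not resolve. Correspondingly, the paper's iterative space is built from $\mathcal{H}_0=\text{span}\{e^{-|x|^2/2}\}$ by the operation $\varphi_0-\sum_k\partial_{x_k}\varphi_k$, which explicitly generates the Hermite functions, settling the density in $L^2(\R^d,\R)$ in one stroke. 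Your alternative density suggestion --- that $\R$-linear combinations of translated Gaussians are already dense (Wiener Tauberian) --- does also work and is arguably more elementary, but then the $|\nabla\cdot|^2$ operation plays no role at all and should simply be dropped rather than presented as a parallel ingredient. In short: the proposal would reach the same conclusion, but the substitution of $\partial_{x_j}\varphi$ for $|\nabla\varphi|^2$ is not a cosmetic variant --- it is the key simplification that makes the $\R^d$ case tractable at the level of rigor the proof demands.
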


\begin{proof}
\noindent \emph{Step 1: We prove that, for every $\varphi \in
\text{span}\{ x_1,\dots,x_d, e^{-|x|^2/2} \}$, the operator $e^{i \varphi}$ is $L^2$-STAR.} Let $\alpha \in\R^{d+1}$ and $\varphi: x \in \R^d \mapsto \alpha_1 x_1 + \dots + \alpha_d x_d + \alpha_{d+1} e^{-|x|^2/2}$. For any $\tau>0$, the operator 
$e^{i\tau(\Delta-V)+i\varphi}$ is $L^2$-exactly reachable in time $\tau$ because associated with the constant controls $u_j=-\alpha_j/\tau$. The operator $\tau(\Delta-V)+\varphi$ is essentially self-adjoint on $C^\infty_c(\R^d,\C)$ thus its closure $A_{\tau}$ is self-adjoint. The multiplicative operator $\varphi$ is self-adjoint on $\{ \psi \in L^2(\R^d,\C) ; \varphi \psi \in L^2(\R^d,\C) \}$. $C^\infty_c(\R^d,\C)$ is a common core of $A_{\tau}$ and $\varphi$. For every $\psi \in C^\infty_c(\R^d,\C)$, 
$\| (A_{\tau}-\varphi) \psi \|_{L^2} =
\tau \| (\Delta-V) \psi \|_{L^2} \to 0$ as $\tau \to 0$. Thus, by Proposition \ref{prop:trotter}, for every $\psi \in L^2(\R^d,\C)$,
$\|(e^{i\tau(\Delta-V)+i\varphi} - e^{i\varphi})\psi  \|_{L^2} \to 0$ as $\tau \to 0$. Therefore, by Lemma \ref{lem:reachable-operators}, the operator $e^{i \varphi}$ is $L^2$-STAR.

\medskip

\noindent \emph{Step 2: We prove that, for every $j \in \{1,\dots,d\}$ and $u \in \R$, the operator $e^{u \partial_{x_j}}$ is $L^2$-STAR.} Let 
$j \in \{1,\dots,d\}$, $u \in \R^*$. By Step 1 and Lemma \ref{lem:reachable-operators}, for every $\tau>0$, the operator
$$L_{\tau}:= e^{\frac{iux_j}{2\tau}}e^{i\tau(\Delta-V)}e^{-\frac{iux_j}{2\tau}} $$
is $L^2$-approximately reachable in time $\tau^+$ .
The operator $\tau(\Delta-V)$ is essentially self-adjoint on $C^\infty_c(\R^d,\C)$ by Proposition \ref{prop:self-adjointness} thus its closure $A$ is self-adjoint. The operator 
$B:=u x_j / (2\tau)$ is self-adjoint on $D(B):=\{ \psi \in L^2(\R^d,\C) ; x_j \psi \in L^2(\R^d,\C) \}$. 
$C^{\infty}_c(\R^d,\C)$ is a core of $A$.
The operator $e^{iB}$ is an isomorphism of $C^{\infty}_c(\R^d,\C)$. Thus, by Proposition \ref{lem:conjugation} and standard computations
\begin{equation}\label{eq:conjugate-dynamics}
L_{\tau} = \exp\left( i\tau
e^{\frac{iux_j}{2\tau}}
(\Delta-V)
e^{-\frac{iux_j}{2\tau}}
\right) = \exp \left(
i \tau (\Delta-V)+u \partial_{x_j} - i \frac{u^2}{2\tau}
\right).
\end{equation}
By Definition \ref{def:L2STAR}, 
$$L_{\tau}' := e^{i \frac{u^2}{2\tau}} L_{\tau}
=  \exp\left( 
i \tau (\Delta-V)+u \partial_{x_j}
\right)$$
is also $L^2$-approximately reachable in time $\tau^+$.
The operator 
$$\tau
e^{\frac{iux_j}{2\tau}}
(\Delta-V)
e^{-\frac{iux_j}{2\tau}}
+ \frac{u^2}{2\tau}
=
\tau (\Delta-V) - i u \partial_{x_j} 
$$
is essentially self-adjoint on $C^\infty_c(\R^d,\C)$ by Proposition \ref{prop:self-adjointness}, thus its closure $A_{\tau}$ is self-adjoint. The operator
$A_0:= -i u \partial_{x_j}$ is self-adjoint on
$D(A_0):=\{ \psi \in L^2(\R^d,\C) ; \partial_{x_j} \psi \in L^2(\R^d,\C) \}$. $C^\infty_c(\R^d;\C)$ is a common core of $A_{\tau}$ and $A_0$. For every $\psi \in C^{\infty}_c(\R^d,\C)$,
$\| (A_{\tau}-A_0)\psi \|_{L^2} = 
\tau \| (\Delta-V) \psi \|_{L^2} \to 0$ as $\tau \to 0$.
Thus, by Proposition \ref{prop:trotter}, for every $\psi \in L^2(\R^d,\C)$, 
$\|(L_{\tau}'-e^{u \partial_{x_j}})\psi \|_{L^2}
=
\| (e^{i A_{\tau}} - e^{i A_0}) \psi \|_{L^2}
\to 0$ as $\tau \to 0$. By Lemma \ref{lem:reachable-operators}, this proves that the operator $e^{u \partial_{x_j}}$ is $L^2$-STAR.

\medskip

\noindent \emph{Step 3: We prove that, if $\varphi \in C^1(\R^d,\R)$ and $e^{i \lambda \varphi}$ is $L^2$-STAR for every $\lambda \in \R$ then the operator $e^{-i \partial_{x_j} \varphi}$ is $L^2$-STAR.} 
Let $\tau>0$. The assumption on $\varphi$, Step 2 and Lemma \ref{lem:reachable-operators} prove that the operator
$$\widetilde{L}_{\tau} := e^{i\frac{\varphi}{\tau}}e^{\tau\partial_{x_j}}e^{-i\frac{\varphi}{\tau}}$$
is $L^2$-STAR. The characteristic method proves that, for every $\psi \in L^2(\R^d,\C)$,
$$\widetilde{L}_{\tau} \psi  : x \in \R^d \mapsto \psi(x+\tau e_j) e^{-i \frac{\varphi(x+\tau e_j)-\varphi(x)}{\tau} }
 \in \C.$$
The continuity of the translation on $L^2(\R^d,\C)$ and the dominated convergence theorem prove that, for every $\psi \in L^2(\R^d,\C)$,
$\| (\widetilde{L}_{\tau}-e^{-i\partial_{x_j}\varphi}) \psi \|_{L^2} \to 0$ as $\tau \to 0$. Finally, by Lemma \ref{lem:reachable-operators}, 
the operator $e^{-i \partial_{x_j} \varphi}$ is $L^2$-STAR.

\medskip

\noindent \emph{Step 4: Iteration.} We define by induction an increasing sequence of sets $(\mathcal{H}_{j})_{j\in\N}$ by
$\mathcal{H}_0=\text{span}\{ e^{-|x|^2/2} \}$ and, for every $j \in \N^*$,
$$\mathcal{H}_j := \text{span}_{\R} \left\{  \varphi_0-\sum_{k=1}^d \partial_{x_k} \varphi_k ;  \varphi_0,\dots,\varphi_d \in \mathcal{H}_{j-1} \right\}$$
and $\mathcal{H}_{\infty} := \cup_{j\in\N} \mathcal{H}_j$.
Thanks to Lemma \ref{lem:reachable-operators}, Steps 1 and 3,
for every $\varphi \in \mathcal{H}_{\infty}$, the operator $e^{i \varphi}$ is $L^2$-STAR. Moreover, by the proof of \cite[Lemma 5.2]{duca-pozzoli}, $\mathcal{H}_{\infty}$ is dense in $L^2(\R^d,\C)$ because it contains the linear combinations of Hermite functions.

\medskip

\noindent \emph{Step 5: Conclusion.} Let $\varphi \in L^2 (\R^d,\R)$. There exists $(\varphi_n)_{n\in\N} \subset \mathcal{H}_{\infty}$ such that $\| \varphi_n -\varphi \|_{L^2} \rightarrow 0 $ as $n \to \infty$. Up to an extraction, one may assume that $\varphi_n \rightarrow  \varphi$ almost everywhere on $\R^d$, as $n \to \infty$. The dominated convergence theorem proves that, for every $\psi \in L^2(\R^d,\C)$,
$\| (e^{i \varphi_n}-e^{i\varphi}) \psi \|_{L^2} \rightarrow 0$ as $n \to \infty$. Finally, Step 4 and Lemma \ref{lem:reachable-operators} prove that the operator $e^{i\varphi}$ is $L^2$-STAR.

\end{proof}

%%%%%%%%%%%%%%%%%%%%%%%%%%%%%%%%%%%%%%%%%%%%%%%%%%%%%%%%%%%%%%%
 \section{Control of flows of gradient vector fields}\label{sec:gradient}

 In this section, we obtain the STC of flows of gradient vector fields (Theorem \ref{prop:gradient-control}) as a corollary of the STC of phases (Theorem \ref{thm:DN-DP}).
We first derive the expression of the group of unitary operators $\{\mathcal{L}_{\phi_f^t}\}_{t\in \R}$ as exponentials of skew-adjoint operators.

\begin{definition}[Unitary transport operator associated with a vector field]
For $f \in {\rm Vec}(M)$, 
the $L^2$-unitary transport operator associated with $f$  is defined by
\begin{equation} \label{Def:T_f}
D(\mathcal{T}_f):=\{ \psi \in L^2(M,\C) ; \langle f,  \nabla \psi\rangle \in L^2(M,\C) \}, \qquad
\mathcal{T}_f \psi = \langle f , \nabla \psi \rangle + \frac{1}{2} {\rm div}(f) \psi.
\end{equation}
\end{definition}

In the first expression, $\nabla \psi$ denotes the  distributional derivative in $\mathcal{D}'(M)$. Since $f$ is globally Lipschitz, $\mathcal{T}_f$ is skew adjoint (we refer to \cite{chernoff} for additional details), thus the group $(e^{t\mathcal{T}_f})_{t\in\R}$ is well defined on $L^2(\T^d,\C)$ as showed in the next lemma. 

\begin{lemma} \label{Lem:op_transp_Tore}
Let $f \in {\rm Vec}(M)$ and 
$P:=
%\mathfrak{e}^{f}
\phi_f^1$.
Then the unitary composition operator associated with $P$
(see (\ref{Def:LP})) satisfies
$\mathcal{L}_P = e^{\mathcal{T}_f}$. 
Moreover for every $t \in \R$ and $\psi \in L^2(M,\C)$,
\begin{equation} \label{SG_transp}
\left( e^{t \mathcal{T}_f} \psi \right)(x)= 
\psi( 
\phi_f^t(x)
%\mathfrak{e}^{tf} x 
) e^{\frac{1}{2} \int_0^t {\rm div} f( 
\phi_f^s(x)
%\frak{e}^{sf} x
) ds}.
\end{equation}
\end{lemma}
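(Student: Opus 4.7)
The plan is to define the right-hand side of (\ref{SG_transp}) as an operator $U_t$ and show that $(U_t)_{t\in\R}$ is a strongly continuous unitary group on $L^2(M,\C)$ whose generator extends $\mathcal{T}_f$; Stone's theorem together with the essential skew-adjointness of $\mathcal{T}_f$ on a suitable core will then force $U_t = e^{t\mathcal{T}_f}$, and the identification $U_1 = \mathcal{L}_P$ will follow from Liouville's formula.

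As a first step I would recall Liouville's formula along the flow: for each $x \in M$,
$$\det D\phi_f^t(x) \,=\, \exp\!\left(\int_0^t {\rm div}\, f(\phi_f^s(x))\, ds\right),$$
so the exponential weight in (\ref{SG_transp}) equals $|J_{\phi_f^t}(x)|^{1/2}$. Comparing with (\ref{Def:LP}) yields $U_t = \mathcal{L}_{\phi_f^t}$, which, once the semigroup identity $U_t = e^{t\mathcal{T}_f}$ is proved, implies $\mathcal{L}_P = e^{\mathcal{T}_f}$ at $t=1$. Unitarity of $U_t$ on $L^2(M,\C)$ then reduces to the change of variables $y = \phi_f^t(x)$, and the group property $U_{t+s} = U_t U_s$ follows from $\phi_f^{t+s} = \phi_f^t \circ \phi_f^s$ combined with the cocycle identity
$$\int_0^{t+s}\!{\rm div}\, f(\phi_f^u(x))\,du \;=\; \int_0^s\!{\rm div}\, f(\phi_f^u(x))\,du + \int_0^t\!{\rm div}\, f\bigl(\phi_f^u(\phi_f^s(x))\bigr)\,du.$$
Strong continuity at $t=0$ is obtained by density: for $\psi \in C_c^\infty(M,\C)$ the map $t \mapsto U_t\psi$ is $L^2$-continuous by dominated convergence (using the Lipschitz character of $f$ to bound the Jacobian uniformly on compact time intervals), and this extends to all of $L^2(M,\C)$ since each $U_t$ is an $L^2$-isometry.

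Next, Stone's theorem provides a unique skew-adjoint generator $A$ with $U_t = e^{tA}$. For $\psi \in C_c^\infty(M,\C)$, differentiating (\ref{SG_transp}) at $t=0$ under the integral sign gives
$$A\psi(x) = \langle f(x), \nabla\psi(x)\rangle + \tfrac{1}{2}\,{\rm div}\, f(x)\,\psi(x) = \mathcal{T}_f \psi(x),$$
so $A$ extends $\mathcal{T}_f|_{C_c^\infty(M,\C)}$. Since $f$ is globally Lipschitz, the Chernoff reference \cite{chernoff} guarantees that $\mathcal{T}_f$ is essentially skew-adjoint on $C_c^\infty(M,\C)$, hence this subspace is a core of the skew-adjoint operator $\mathcal{T}_f$ defined in (\ref{Def:T_f}). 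Two skew-adjoint extensions of the same symmetric operator must coincide, so $A = \mathcal{T}_f$, completing the proof.

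The main obstacle is the careful treatment of the core and the domain issues for $\mathcal{T}_f$ in this generality (manifold $M$, merely globally Lipschitz $f$), since without the Lipschitz hypothesis the flow may not be globally defined and $\mathcal{T}_f$ may fail to be essentially skew-adjoint. The cited Chernoff result handles precisely this, so once it is invoked the remaining computations are standard applications of the change-of-variables formula, Liouville's identity, and dominated convergence.
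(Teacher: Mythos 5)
Your proposal is correct, and it takes a genuinely different route from the paper's. The paper argues \emph{forward}: it starts from the strongly continuous unitary group $e^{t\mathcal{T}_f}$ (whose existence is supplied by Chernoff's theorem), asserts that $\xi(t,\cdot)=e^{t\mathcal{T}_f}\psi$ solves the transport equation $\partial_t\xi = \langle f,\nabla\xi\rangle + \tfrac12(\operatorname{div}f)\xi$, derives the explicit formula \eqref{SG_transp} by the method of characteristics, and then invokes Liouville's formula to match the exponential factor with $|J_P|^{1/2}$. You argue \emph{backward}: you define $U_t$ by the explicit formula, verify directly that $(U_t)_{t\in\R}$ is a strongly continuous unitary group (unitarity by change of variables, group law by the cocycle identity for the divergence integral, strong continuity by dominated convergence on $C_c^\infty$), invoke Stone's theorem to produce a skew-adjoint generator $A$, compute $A = \mathcal{T}_f$ on $C_c^\infty(M,\C)$, and conclude by essential skew-adjointness that $A$ equals the $\mathcal{T}_f$ of \eqref{Def:T_f}. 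Both proofs hinge on the same two external inputs — Chernoff (for the operator-theoretic side) and Liouville's formula (for identifying the density factor with the Jacobian) — but your version avoids the need to justify that $e^{t\mathcal{T}_f}\psi$ solves the transport PDE in a pointwise/classical sense for general $\psi\in L^2$, which the paper's appeal to the characteristics method implicitly requires; the price is that you must check the semigroup axioms for $U_t$ by hand. One small wording caveat in your argument: the assertion that two skew-adjoint extensions of the same symmetric operator must coincide is not true in general; it holds here precisely because $\mathcal{T}_f\restriction_{C_c^\infty}$ is \emph{essentially} skew-adjoint (so its skew-adjoint extension is unique). You do state essential skew-adjointness, so the conclusion stands, but the intermediate sentence should be qualified accordingly.
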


\begin{proof} 
    Let $\psi \in L^2(M,\C)$. Then 
    $e^{t \mathcal{T}_f} \psi =\xi(t,.)$ where $\xi$ is the solution of the transport equation
    $$\left\lbrace \begin{array}{ll}
\partial_t \xi(t,x) = \langle f(x),\nabla_x \xi(t,x)\rangle 
+ \frac{1}{2} ({\rm div} f)(x) \xi(t,x), & (t,x) \in \R \times M, \\
\xi(0,.)=\psi.
    \end{array}\right.$$
    The characteristic method proves that $\xi(t,x)$ is given by the right hand side of (\ref{SG_transp}).
    
By definition of $P$, for every $x \in M$,
$dP(x)=R(1)$ where the resolvent $R \in C^0(\R,\mathcal{M}_d(\R))$ solves the linear equation
$\dot{R}(t)= Df( \phi_f^t(x)) R(t)$,
with initial condition
$R(0)=I_d$.
By Liouville formula
$$J_P(x)=\text{det}( dP(x) )= \text{det}(R(1))=
e^{\int_0^1 {\rm tr}D f (
\phi_f^s(x)
%\frak{e}^{sf}(x)) 
ds} = 
e^{\int_0^1 {\rm div } f (
\phi_f^s(x)
%\frak{e}^{sf}(x)
) ds}$$
therefore, for every $\psi \in L^2(M,\C)$,
$$(\mathcal{L}_P \psi)(x) =
|J_P(x)|^{1/2} \psi \circ P(x) =
e^{\frac{1}{2} \int_0^1 {\rm div } f (
\phi_f^s(x)
%\frak{e}^{sf}(x)
) ds}
\psi( \phi_f^1(x) )
= (e^{\mathcal{T}_f} \psi)(x).$$
\end{proof}

\medskip

\begin{proof}[Proof of Theorem \ref{prop:gradient-control}.]
Let $\varphi  \in \frak{G}$ and $f:=2 \nabla \varphi$.
Let $n \in \N^*$ and $\tau>0$.
Theorem \ref{thm:DN-DP} and Lemma \ref{lem:reachable-operators} prove that the operator
$$
L_{\tau,n}:= 
\left( 
e^{i \frac{|\nabla \varphi|^2}{n\tau}  }
e^{i  \frac{\varphi}{\tau} }
e^{i\frac{\tau}{n}(\Delta-V)}
e^{-i \frac{\varphi}{\tau} }
\right)^{n}
$$
is $L^2$-approximately reachable in time $\tau^+$, because
$\varphi, |\nabla\varphi|^2 \in L^2 (M,\R)$.

\medskip

\noindent \emph{Step 1: We prove that
\begin{equation} \label{Step1/Ltaun}
L_{\tau,n} =
\left( 
e^{ i \frac{|\nabla \varphi|^2}{n \tau }  }
\exp \left( i\frac{\tau}{n}\,
e^{i \frac{\varphi}{\tau}  }
(\Delta-V)
e^{-i \frac{\varphi}{\tau} }
\right)
\right)^{n}.
\end{equation}}
The operator $(\Delta-V)$ is essentially self-adjoint on $C^{\infty}_c(M,\C)$ by hypothesis, thus its closure $A$ is self-adjoint. The operator $B:=\varphi/\tau$ is self-adjoint on $D(B):=\{ \psi \in L^2(M,\C) ; \varphi \psi \in L^2(M,\C) \}$.  The operator $e^{i B}$ is an isomorphism of $C^{\infty}_c(M,\R)$ because $\varphi \in C^{\infty}(M)$. 
Thus, Proposition \ref{lem:conjugation}  proves (\ref{Step1/Ltaun}).

\medskip

\noindent \emph{Step 2: We prove that $L_{\tau}$ is $L^2$-reachable in time $\tau^+$, where}
$$L_{\tau}:= 
%\exp \left( 
%i \tau (\Delta-V)+2 \langle \nabla \varphi , \nabla \rangle + \Delta %\varphi 
%\right)= 
\exp \left( 
i \tau (\Delta-V) + \mathcal{T}_f
\right). $$
The operator 
$e^{i \frac{\varphi}{\tau}  }
(\Delta-V)
e^{-i \frac{\varphi}{\tau} }$
is essentially self-adjoint on $C^{\infty}_c(M,\C)$ by hypothesis, thus its closure $A_1$ is self-adjoint. The multiplicative operator $B_1:=|\nabla \varphi|^2/\tau$ is self-adjoint on $L^2(M,\C)$ because $|\nabla \varphi|^2 \in L^\infty(M,\R)$. Hence, $A_1+B_1$ is self-adjoint on $D(A_1)$ because $B_1$ is bounded (by Proposition \ref{prop:kato-rellich}). Thus, by Proposition \ref{prop:trotter-kato},
for every $\psi \in L^2(M,\C)$,
$\| (L_{\tau,n}-L_{\tau}')\psi \|_{L^2} \underset{n \to \infty}{\longrightarrow} 0$
where
$$L_{\tau}':=\exp \left( i  \left\{  \frac{|\nabla \varphi|^2}{\tau}  + \tau 
e^{i \frac{\varphi}{\tau}  }
(\Delta-V)
e^{-i \frac{\varphi}{\tau}}
  \right\} \right)
  $$
  and standard computations prove that
\begin{equation} \label{Step2:Log}
      \tau 
e^{i \frac{\varphi}{\tau}  }
(\Delta-V)
e^{-i \frac{\varphi}{\tau}}
+ \frac{|\nabla \varphi|^2}{\tau} 
=
\tau (\Delta-V) - 2 i \langle \nabla \varphi , \nabla \rangle  - i \Delta \varphi 
=
\tau (\Delta-V) - i \mathcal{T}_f
\end{equation}
thus $L_{\tau}'=L_{\tau}$. By Lemma \ref{lem:reachable-operators}, 
the operator $L_{\tau}$ is $L^2$-approximately reachable in time $\tau^+$. 

\medskip

\noindent \emph{Step 3: We prove that the operator 
$e^{\mathcal{T}_f}$ is $L^2$-STAR.}
The operator (\ref{Step2:Log}) defined on $C^{\infty}_c(M,\C)$ has a self-adjoint closure $A_{\tau}$ by hypothesis and Proposition \ref{prop:kato-rellich}.
The operator
$A_0:=- i \mathcal{T}_f$
is self-adjoint because $\nabla \varphi \in W^{1,\infty} (M,\R)$ (see, e.g., \cite{chernoff}).
$C^{\infty}_c(M,\C)$ is a common core of $A_{\tau}$ and $A_0$.
%because
%$\nabla \varphi \in L^{\infty}(\R^d,\R)$.
For every $\psi \in C^\infty_c(M,\C)$,
$\| (A_{\tau}-A_0) \psi \|_{L^2} =
\tau \left\| (\Delta-V)\psi \right\|_{L^2}    
\to 0$ as $\tau \to 0$.
Thus, by Proposition \ref{prop:trotter}, for every $\psi \in L^2(M,\C)$,
$\| 
(L_{\tau} - e^{ \mathcal{T}_f }) \psi
\|_{L^2} 
= \| (e^{i A_{\tau}} - e^{i A_0}) \psi \|_{L^2}
\to 0$ as $\tau \to 0$.
Finally, by Lemma \ref{lem:reachable-operators}, the operator $e^{\mathcal{T}_f}$ is $L^2$-STAR.  
\end{proof}

%%%%%%%%%%%%%%%%%%%%%%%%%%%%%%%%%%%%%%%%%%%%%%%%%%%%%%
\section{A Lie algebra structure }\label{sec:lie-algebra}

In this section, we prove Theorem \ref{prop:lie-algebra}.

\begin{proof}[Proof of Theorem \ref{prop:lie-algebra}.]
Clearly, if $f \in \frak{L}$ and $\lambda \in \R$ then $\lambda f \in \frak{L}$. Thus, it suffices to prove that $\frak{L}$ is stable by summation and Lie bracket. 

\medskip

\noindent \emph{Step 1: $\frak{L}$ is stable by summation.}
It suffices to prove that 
$( f,g\in \frak{L} \Longrightarrow e^{\mathcal{T}_{f+g}} \text{ is } L^2\text{-STAR } )$
because, for every $t \in \R$, $tf, tg \in \frak{L}$ and $t \mathcal{T}_{f+g} = \mathcal{T}_{tf+tg}$.
Let $f,g\in\frak{L}$.
By Lemma \ref{lem:reachable-operators}, for every $n\in\N^*$,
the operator
$( e^{\frac{1}{n}\mathcal{T}_f}e^{\frac{1}{n}\mathcal{T}_g} )^n$
is $L^2$-STAR. 
The operators $\mathcal{T}_f$ and $\mathcal{T}_g$ are skew adjoint on $L^2(\T^d,\C)$ and the operator $\mathcal{T}_f+\mathcal{T}_g=\mathcal{T}_{f+g}$ is essentially skew adjoint on $D(\mathcal{T}_f) \cap D(\mathcal{T}_g)$. Thus, by Proposition \ref{prop:trotter-kato}, for every $\varphi \in L^2(\mathbb{T}^d,\C)$, 
$\| ( e^{\frac{1}{n}\mathcal{T}_f}e^{\frac{1}{n}\mathcal{T}_g} )^n \varphi - e^{\mathcal{T}_{f+g}} \varphi \|_{L^2} \to 0$ as $n \to \infty$. By Lemma \ref{lem:reachable-operators}, the operator $e^{\mathcal{T}_{f+g}}$ is $L^2$-STAR. 
 
\medskip

\noindent \emph{Step 2: $\frak{L}$ is stable by Lie bracket, i.e. $(f,g\in \frak{L} \Longrightarrow [f,g]:=(Dg)f-(Df)g\in \frak{L}) $}. It suffices to prove that 
$( f,g \in \frak{L} \Longrightarrow e^{\mathcal{T}_{[f,g]}}\text{ is } L^2 \text{-STAR} )$, because, for every $t \in \R$, $t f \in \frak{L}$ and
$t\mathcal{T}_{[f,g]}=\mathcal{T}_{[tf,g]}$.
For $t \in \R^*$ and $n \in \N^*$, the operator 
$$L_{t,n} := \left(e^{\frac{-1}{tn}\mathcal{T}_f}e^{-t\mathcal{T}_g}  e^{\frac{1}{tn}\mathcal{T}_f} e^{t\mathcal{T}_g}  \right)^n$$
is $L^2$-STAR thanks to Lemma \ref{lem:reachable-operators}. The transport operators $\mathcal{T}_f,\mathcal{T}_g$ are skew-adjoint.
By (\ref{SG_transp}), $e^{t\mathcal{T}_g}$ is an isomorphism of $C^\infty_c(M,\C)$.Thus by applying Proposition \ref{lem:conjugation} we get
$$L_{t,n}=\left(e^{\frac{-1}{tn}\mathcal{T}_f}\exp\left(e^{-t\mathcal{T}_g}  \frac{1}{tn}\mathcal{T}_f e^{t\mathcal{T}_g}\right)\right)^n.$$
By Proposition \ref{prop:trotter-kato}, 
$$
\forall \varphi \in L^2(M,\C), \qquad
\| (L_{t,n}-L_{t})\varphi \|_{L^2} \underset{n \to \infty}{\longrightarrow} 0,
$$
where
$$L_t=\exp\left(\frac{-1}{t}\mathcal{T}_f+ e^{-t\mathcal{T}_g}  \frac{1}{t}\mathcal{T}_f e^{t\mathcal{T}_g} \right).$$
By Lemma \ref{lem:reachable-operators}, this proves that $L_t$ is $L^2$-STAR. By combining Lemma \ref{Lem:crochet} below, with Proposition \ref{prop:trotter}, we obtain, for every $\varphi \in L^2(M,\C)$, $\|(L_t-e^{\mathcal{T}_{[f,g]}})\varphi \|_{L^2} \to 0$ as $t \to 0$. Finally, Lemma \ref{lem:reachable-operators} proves that the operator $e^{\mathcal{T}_{[f,g]}}$ is $L^2$-STAR.
%{\color{magenta}Using Lemma \ref{Lem:op_transp_Tore}, we compute
%\begin{align*}
%&e^{-t\mathcal{T}_g}  \frac{1}{t}\mathcal{T}_f %e^{t\mathcal{T}_g}=e^{\frac{1}{2}\int_0^t{\rm div}g(\frak{e}^{(r-%t)g}x)dr+\frac{1}{2}\int_{0}^{-t}{\rm div}g(\frak{e}^{-rg}x)dr}\\
%& \left(D\frak{e}^{tg}(e^{-tg}x)\frac{f(\frak{e}^{-tg}x)}
%{t}\nabla+\langle f(\frak{e}^{-tg}x),\frac{1}{2t}\int_{0}^t D \frak{e}^{rg}(\frak{e}^{-tgx})\nabla {\rm div} g(\frak{e}^{(r-t)g}x)dr\rangle+\frac{{\rm div}f(\frak{e}^{-tg}x)}{2t}\right).
%\end{align*}
%Now, by expanding
%\begin{equation}\label{eq:expansion}
%D\frak{e}^{tg}=I+tDg+o(t), \quad \frac{f(\frak{e}^{-tg}x)}%{t}=\frac{f(x)}{t}-Df(x)g(x)+o(1),
%\end{equation}
%we see that the operator exponentiated in $L_t$ converges pointwise in $L^2$, as $t\to 0$, to
%$((Dg)f-(Df)g)\nabla+\frac{1}{2}(\langle f,\nabla{\rm div}g\rangle-\langle \nabla {\rm div}f,g\rangle)=\mathcal{T}_{[f,g]}$. The conclusion follows from Proposition \ref{prop:trotter}.}
\end{proof}

\begin{lemma} \label{Lem:crochet}
Let $f,g \in {\rm Vec}(M)$ and 
$\psi \in C^{\infty}_c(M,\C)$. Then
$$\left\| \left(
e^{-t\mathcal{T}_g}\, \mathcal{T}_f\, e^{t\mathcal{T}_g}  
- \mathcal{T}_f  - t\, \mathcal{T}_{[f,g]} \right)\psi
\right\|_{L^2} = \underset{t \to 0}{o}(t). $$
\end{lemma}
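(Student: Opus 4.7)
The plan is to prove this Taylor expansion by introducing the auxiliary function $w(t) := e^{-t\mathcal{T}_g} \mathcal{T}_f e^{t\mathcal{T}_g} \psi$ as a map $\R \to L^2(M,\C)$, computing its $L^2$-derivative $w'(0) = \mathcal{T}_{[f,g]}\psi$, and then applying the fundamental theorem of calculus for Bochner integrals. The strategy rigorously implements the standard Lie-theoretic identity $\frac{d}{dt}(e^{-tY} X e^{tY}) = e^{-tY}[X,Y]e^{tY}$, using the smoothness and compact support of the test state $\psi$ to legitimize all formal manipulations.

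First, I would verify that $e^{t\mathcal{T}_g}\psi$ stays in $C^\infty_c(M,\C)$ for every $t\in\R$. By Lemma \ref{Lem:op_transp_Tore}, $e^{t\mathcal{T}_g}\psi$ equals $\psi \circ \phi_g^t$ times a smooth Jacobian factor, so its support is contained in the compact set $\phi_g^{-t}(\mathrm{supp}\,\psi)$; hence $\mathcal{T}_f e^{t\mathcal{T}_g}\psi \in C^\infty_c(M,\C)$ and $w(t)$ is well defined in $L^2$, smooth in $(t,x)$.

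Second, I would compute $w'(t)$ in $L^2$. Formal differentiation, justified because all intermediate functions are $C^\infty_c$ and $\mathcal{T}_g$ commutes with $e^{\pm t\mathcal{T}_g}$, gives
$$w'(t) = e^{-t\mathcal{T}_g}[\mathcal{T}_f,\mathcal{T}_g]e^{t\mathcal{T}_g}\psi.$$
A direct coordinate computation on $C^\infty$ functions then yields $[\mathcal{T}_f,\mathcal{T}_g]\phi = \mathcal{T}_{[f,g]}\phi$: the principal parts combine into $\langle [f,g], \nabla \phi\rangle$ by the chain rule, and the zeroth-order terms combine via the algebraic identity $\mathrm{div}[f,g] = \langle f,\nabla(\mathrm{div}\,g)\rangle - \langle g,\nabla(\mathrm{div}\,f)\rangle$, obtained by expanding $\mathrm{div}[f,g]$ in coordinates and using symmetry of mixed partial derivatives. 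In particular $w'(0) = \mathcal{T}_{[f,g]}\psi$.

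Finally, by Bochner integration in $L^2$,
$$\|w(t) - w(0) - tw'(0)\|_{L^2} \le |t| \sup_{s \in [0,t]} \|w'(s) - w'(0)\|_{L^2},$$
so it suffices to show $\|w'(s) - w'(0)\|_{L^2} \to 0$ as $s \to 0$. Using the isometry of $e^{-s\mathcal{T}_g}$ and inserting $e^{-s\mathcal{T}_g}e^{s\mathcal{T}_g}=I$, this reduces to
$$\|\mathcal{T}_{[f,g]}e^{s\mathcal{T}_g}\psi - e^{s\mathcal{T}_g}\mathcal{T}_{[f,g]}\psi\|_{L^2} \to 0.$$
The second term converges to $\mathcal{T}_{[f,g]}\psi$ by strong continuity of the semigroup. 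For the first, explicit differentiation of formula (\ref{SG_transp}) shows that $e^{s\mathcal{T}_g}\psi$ and $\nabla(e^{s\mathcal{T}_g}\psi)$ converge uniformly to $\psi$ and $\nabla\psi$ on the compact set $K^\star := \bigcup_{|s|\le 1}\phi_g^{-s}(\mathrm{supp}\,\psi)$; since $[f,g]$ and $\mathrm{div}[f,g]$ are smooth and hence bounded on $K^\star$, dominated convergence concludes. The main obstacle is precisely this last step: the bracket $[f,g]$ of globally Lipschitz vector fields need not be globally bounded, so the uniform control of the supports of $e^{s\mathcal{T}_g}\psi$ for small $s$, provided by the compact support of $\psi$, is what makes the argument work.
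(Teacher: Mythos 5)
Your argument is correct and follows a genuinely different route from the paper's. The paper (Steps 1--2 of Lemma~\ref{Lem:crochet}) first computes $e^{-t\mathcal{T}_g}\,\mathcal{T}_f\,e^{t\mathcal{T}_g}\psi$ \emph{explicitly} by iterating the characteristics formula (\ref{SG_transp}), obtaining a pointwise expression involving $\phi_g^t\star f$, ${\rm div}\,f\circ\phi_g^{-t}$, and a remainder $G(t,x)$; it then estimates the difference with $\mathcal{T}_f\psi + t\,\mathcal{T}_{[f,g]}\psi$ term by term, applying the scalar Taylor formula in $t$ to each of those $t$-dependent quantities and bounding everything in $L^\infty$ on the fixed compact ${\rm Supp}\,\psi$. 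You instead package this into the abstract ad-formula $w'(t)=e^{-t\mathcal{T}_g}[\mathcal{T}_f,\mathcal{T}_g]e^{t\mathcal{T}_g}\psi$, the operator identity $[\mathcal{T}_f,\mathcal{T}_g]=\mathcal{T}_{[f,g]}$ (which does hold, with the paper's convention $[f,g]=(Dg)f-(Df)g$, via the coordinate identity ${\rm div}[f,g]=\langle f,\nabla{\rm div}\,g\rangle-\langle g,\nabla{\rm div}\,f\rangle$), and the Bochner integral remainder $\|w(t)-w(0)-tw'(0)\|\le|t|\sup_{[0,t]}\|w'(s)-w'(0)\|$. This is cleaner and shorter, but pushes the work into two verifications that the paper's explicit computation sidesteps: (i) differentiability in $L^2$ of $t\mapsto\mathcal{T}_f e^{t\mathcal{T}_g}\psi$, which involves interchanging an unbounded differential operator with a $t$-derivative (justified, as you say, by joint $(t,x)$-smoothness and the uniform compact support $K^\star$), and (ii) $L^2$-continuity of $w'$ at $0$, which you correctly reduce to convergence of $e^{s\mathcal{T}_g}\psi$ and its gradient in $L^\infty(K^\star)$, making the unboundedness of $[f,g]$ harmless. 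You could have also used directly that ${\rm Supp}\,w(t)\subset{\rm Supp}\,\psi$ for all $t$ (since $\phi_g^t\circ\phi_g^{-t}={\rm Id}$), which slightly streamlines the domination argument, but the proof as written is sound. Both approaches deliver exactly the $o(t)$ claimed; neither gives nor needs $O(t^2)$.
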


\begin{proof}
\noindent \emph{Step 1: We prove}
$$(e^{-t\mathcal{T}_g}\, \mathcal{T}_f\, e^{t\mathcal{T}_g} \psi)(x)
 = 
\langle (\phi_g^t \star f) (x) , \nabla \psi (x) \rangle 
 + 
\frac{1}{2} \left( {\rm div} f(\phi_g^{-t}(x)) 
+ t  G(t,x)
  \right) \psi(x) $$
where $(\phi_g^t \star f)$ is the push forward of the vector field $f$ by the diffeomorphism $\phi_g^t$, i.e.
$$(\phi_g^t \star f) (x) := D \phi_g^t( \phi_g^{-t}(x) ) f ( \phi_g^{-t}(x) ) =
(D \phi_g^{-t}(x))^{-1}f ( \phi_g^{-t}(x) )$$
and
$$G(t,x):=\int_{-1}^0 D({\rm div} g)(\phi_g^{t\theta}(x))
D \phi_g^{t \theta}(\phi_g^{-t}(x)) 
(\phi_g^t \star f)(x) d\theta.$$
Using (\ref{SG_transp}) and (\ref{Def:T_f}), we obtain
\begin{equation} \label{interm1}
\begin{aligned}
(e^{-t\mathcal{T}_g}\, \mathcal{T}_f\, e^{t\mathcal{T}_g} \psi)(x)
& =
(\mathcal{T}_f\, e^{t\mathcal{T}_g} \psi)( \phi_g^{-t}(x)) e^{\frac{1}{2}\int_0^{-t} {\rm div} g ( \phi_g^s(x) )ds  }
\\ & = \left( 
\langle f , \nabla (e^{t\mathcal{T}_g} \psi) \rangle + \frac{1}{2} {\rm div}(f) (e^{t\mathcal{T}_g} \psi)\,
\right) (\phi_g^{-t}(x))
    e^{-\frac{1}{2}\int_{-t}^{0} {\rm div} g ( \phi_g^s(x) )ds  }.
\end{aligned}
\end{equation}
We deduce from the expression (\ref{SG_transp}) that
\begin{equation} \label{interm2}
(e^{t\mathcal{T}_g} \psi)(\phi_g^{-t}(x)) =
\psi(x) e^{\frac{1}{2} \int_{-t}^0 {\rm div} g ( \phi_g^s(x) ds}.
\end{equation}
We deduce from (\ref{SG_transp}) and the chain rule that
\begin{align}
& \langle f(x) , \nabla(e^{t \mathcal{T}_g} \psi)(x) \rangle
= D(e^{t \mathcal{T}_g} \psi)(x) f(x) 
\\  = &  
\left( D\psi(\phi_g^t(x)) D \phi_g^t(x) f(x) +
\frac{1}{2} \int_0^t D({\rm div} g )( \phi_g^s(x)) D\phi_g^s(x) f(x) ds\,  \psi(\phi_g^t(x))
\right)e^{\frac{1}{2} \int_0^t {\rm div} g (\phi_g^s(x))ds}.
\end{align}
We deduce from the resolvent relation 
$D\phi_g^s(y) (D\phi_g^t(y))^{-1} =D \phi_g^{s-t}(y)$ that
\begin{equation} \label{interm3}
\begin{aligned}
& \langle f , \nabla(e^{t \mathcal{T}_g} \psi) \rangle (\phi_g^{-t}(x)) =
\Big( D\psi(x) (\phi_g^t \star f)(x) 
\\  + & 
\frac{1}{2} \int_0^t D({\rm div} g )( \phi_g^{s-t}(x)) 
D \phi_g^{s-t}( \phi_g^{-t}(x)) (\phi_g^t \star f)(x) ds\, 
 \psi(x)
\Big)e^{\frac{1}{2} \int_{-t}^0 {\rm div} g (\phi_g^s(x))ds}.
\end{aligned}
\end{equation}
We conclude Step 1 by gathering (\ref{interm1}), (\ref{interm2}) and (\ref{interm3}), in which we use the change of variable $s-t=t\theta$.

\medskip

\noindent \emph{Step 2: Conclusion.}
We deduce from Step 1 and the relation
$$\mathcal{T}_{[f,g]}=\langle [f,g] ,\nabla \rangle + \frac{1}{2} \left( D({\rm div} g)f- D({\rm div} f) g
\right)$$
that
\begin{equation} \label{estim00}
\begin{aligned}
\left( e^{-t\mathcal{T}_g}\, \mathcal{T}_f\, e^{t\mathcal{T}_g}  
- \mathcal{T}_f - t\, \mathcal{T}_{[f,g]} \right) \psi
= &
\left\langle \phi_g^t \star f - f - t [f,g]  , \nabla \psi \right\rangle 
+ \frac{t}{2}
\left( G(t,.)-G(0,.) \right) \psi 
\\ & + \frac{1}{2} \left(
{\rm div} f \circ \phi_g^{-t} - {\rm div} f + t\, D( {\rm div} f ) g 
\right) \psi.
\end{aligned}
\end{equation}
Thus, with $K:=\text{Supp}(\psi)$,
\begin{equation} \label{estim0}
\begin{aligned}
\| \left( e^{-t\mathcal{T}_g}\, \mathcal{T}_f\, e^{t\mathcal{T}_g} 
- \mathcal{T}_f - t\, \mathcal{T}_{[f,g]} \right) \psi \|_{L^2}
\leq 
& 
\| \phi_g^t \star f - f - t [f,g] \|_{L^{\infty}(K)} 
\| \nabla \psi \|_{L^2} 
\\ &
+ \frac{t}{2}
\| G(t,.)-G(0,.) \|_{L^{\infty}(K)}  \| \psi \|_{L^2}
\\ & 
+ \frac{1}{2} 
\| {\rm div} f \circ \phi_g^{-t} - {\rm div} f + t\, D( {\rm div} f ) g 
\|_{L^{\infty}(K)}  \|\psi\|_{L^2}.
\end{aligned}
\end{equation}

The expression $F(t,x):=(\phi_g^t \star f)(x)$ defines  $F \in {\rm Vec}(M)$ thus, by Taylor formula, for every $(t,x) \in \R \times M$,
$$\left| F(t,x)-F(0,x)-t \frac{\partial F}{\partial t}(0,x) \right| \leq
\frac{|t|^2}{2}\, \sup \left\{ \left| \frac{\partial^2 F}{\partial \tau^2}(\tau,y) \right|;
(\tau,y) \in [0,t] \times M  \right\}.$$
Moreover, the chain rule  and the resolvent relation
$$\frac{\partial}{\partial t} D\phi_g^{t}(x) = Dg(\phi_g^t(x))  D\phi_g^{t}(x) $$
prove that
$$\frac{\partial F}{\partial t}(t,x)
=  (D \phi_g^{-t}(x))^{-1} Dg \left( \phi_g^{-t}(x) \right) f( \phi_g^{-t}(x)) 
- (D \phi_g^{-t}(x))^{-1} Df( \phi_g^{-t}(x)) g(\phi_g^{-t}(x))$$
thus
$$\frac{\partial F}{\partial t}(0,x) = [f,g](x).$$
The continuous function $(t,x) \mapsto \partial_t^2 F(t,x)$ is bounded on the compact set $[0,1] \times K$, thus
\begin{equation} \label{estim1}
\| \phi_g^t \star f - f - t [f,g] \|_{L^{\infty}(K)} = \underset{t \to 0}{o}(t).
\end{equation}
The same argument applied to the expression $F(t,x):={\rm div} f( \phi_g^{-t}(x))$ gives
\begin{equation} \label{estim2}
\left\| 
{\rm div} f \circ \phi_g^{-t} - {\rm div} f + t D( {\rm div} f ) g 
\right\|_{L^{\infty}(K)}  = \underset{t \to 0}{o}(t) . 
\end{equation}
The same argument applied to $G$ proves 
\begin{equation} \label{estim3}
\|G(t,.)-G(0)\|_{L^{\infty}(K)}=\underset{t \to 0}{o}(1).
\end{equation}
The estimates  (\ref{estim0}), (\ref{estim1}), (\ref{estim2}),(\ref{estim3}) give the conclusion. 
\end{proof}

%%%%%%%%%%%%%%%%%%%%%%%%%%%%%%%%%%%%%%%%%%%%%%%%%%%%%%%%%
\section{The Lie algebra generated by gradient vector fields}\label{sec:gradient-algebra}

In this section, we prove Theorem \ref{thm:gradient-algebra}.
%It turns out that the Lie algebra generated by gradient vector fields when $M=\T^d$ (and, additionally, the constant vector fields when $M=\R^d$) has density properties  We could not find a reference where such result is proved more in general, hence we furnish the following proof.

 %-------------------------------------------------------
\subsection{On $\T^d$}
\begin{proposition}\label{prop:algebra-torus}
Let $M=\T^d$. Then, $\frak{L}_0:={\rm Lie}(\mathfrak{G})$ contains any linear combination of vector fields of the form
\begin{equation} \label{Monome}
f(x) = \underset{i \in I}{\Pi} \cos( \ell_i x_i)  
\underset{i \in I^c}{\Pi} \sin( \ell_i x_i ) e_k
\end{equation}
where 
$I$ is a subset of $\{1,\dots,d\}$,
$k \in \{1,\dots,d\}$, 
$(\ell_1,\dots \ell_d) \in \N^d$, and $e_k=\partial_{x_k}$. Hence Theorem \ref{thm:gradient-algebra} on $\T^d$ holds.
\end{proposition}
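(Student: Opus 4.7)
The plan is to first establish the algebraic claim that $\text{Lie}(\mathfrak{G})$ contains every linear combination of monomials of the form (\ref{Monome}), and then deduce Theorem \ref{thm:gradient-algebra} on $\T^d$ by a Fourier-density argument. The key idea is to work in the complexification $\mathfrak{G}_\C := \mathfrak{G} + i\mathfrak{G}$ and exploit the plane-wave generators $V_\omega(x) := \omega\, e^{i\omega\cdot x} = -i\nabla e^{i\omega\cdot x}$, which lie in $\mathfrak{G}_\C$ for every $\omega \in \Z^d$ (the case $\omega = 0$ yielding the constants, which are in $\mathfrak{G}$ via the ``$\nabla\varphi = \text{const}$'' clause of (\ref{def:G_gothique})). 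A direct calculation gives the key identity
$$[V_\omega, V_\sigma] = i(\omega\cdot\sigma)(\sigma - \omega)\, e^{i(\omega+\sigma)\cdot x}.$$

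Fixing $\eta \in \Z^d\setminus\{0\}$ and letting $\omega,\sigma \in \Z^d$ range over pairs with $\omega + \sigma = \eta$ and $\omega\cdot\sigma \neq 0$, this identity produces $\tau\, e^{i\eta\cdot x}$ for every $\tau$ in the $\R$-linear span of $S_\eta := \{\eta\} \cup \{\sigma - \omega : \omega + \sigma = \eta,\, \omega\cdot\sigma \neq 0\}$. Choosing $\omega = -k\, e_j$ and $\sigma = \eta + k\, e_j$ for any integer $k \notin \{0, -\eta_j\}$ gives $\omega\cdot\sigma = -k(k + \eta_j) \neq 0$ and $\sigma - \omega = \eta + 2k\, e_j$, so $S_\eta$ contains $\eta$ together with $\eta + 2k\, e_j$ for every $j \in \{1,\dots,d\}$, and therefore spans $\R^d$. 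Since $\text{Lie}(\mathfrak{G}_\C) = \text{Lie}(\mathfrak{G}) + i\,\text{Lie}(\mathfrak{G})$, the real and imaginary parts $\tau\cos(\eta\cdot x)$ and $\tau\sin(\eta\cdot x)$ lie in $\text{Lie}(\mathfrak{G})$ for every $\tau \in \R^d$ and $\eta \in \Z^d$; specializing $\tau = e_k$ places $\cos(\eta\cdot x)\, e_k$ and $\sin(\eta\cdot x)\, e_k$ in $\text{Lie}(\mathfrak{G})$. The product-to-sum identities express each scalar $\prod_{i \in I}\cos(\ell_i x_i)\prod_{i \in I^c}\sin(\ell_i x_i)$ as a finite $\R$-linear combination of $\cos(\omega\cdot x)$ and $\sin(\omega\cdot x)$ with $\omega \in \Z^d$, so multiplying by $e_k$ yields the first claim.

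For Theorem \ref{thm:gradient-algebra} on $\T^d$, I approximate any $f \in {\rm Vec}_c(\T^d) = {\rm Vec}(\T^d)$ in $C^1(\T^d, \R^d)$ by trigonometric-polynomial vector fields $f_n$ (for instance Fej\'er means of its Fourier series), which lie in $\text{Lie}(\mathfrak{G})$ by the first claim. Standard ODE continuity gives $\phi_{f_n}^1 \to \phi_f^1$ in $C^1$, hence $|J_{\phi_{f_n}^1}|^{1/2} \to |J_{\phi_f^1}|^{1/2}$ uniformly, and a dominated convergence argument applied to Definition \ref{def:Diffc0_action} yields $\mathcal{L}_{\phi_{f_n}^1}\psi \to \mathcal{L}_{\phi_f^1}\psi$ in $L^2$ for every $\psi \in L^2(\T^d,\C)$. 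The main obstacle is the combinatorial check that for each $\eta \in \Z^d\setminus\{0\}$ the integer-lattice constraints on $\omega,\sigma$ together with the nondegeneracy condition $\omega\cdot\sigma \neq 0$ leave enough room in $S_\eta$ to span $\R^d$; once this is in hand, the bracket formula, the trigonometric identities, and the ODE-continuity density argument are all routine.
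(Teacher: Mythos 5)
Your argument for the algebraic claim is genuinely different from the paper's and, with one fix, works; but as written there is a real gap: you never obtain the constant vector fields $e_k$, which are exactly the monomials \eqref{Monome} with all $\ell_i=0$. You restrict the span argument to $\eta\in\Z^d\setminus\{0\}$, and your parenthetical fall-back for the constants is wrong on two counts: $V_0$ is the zero vector field (not a constant), and on $\T^d$ the set $\mathfrak{G}$ is just $\{\nabla\varphi;\varphi\in C^\infty(\T^d,\R)\}$ (as stated right after \eqref{def:G_gothique}), which contains no nonzero constant since $x_j$ is not periodic; the paper itself has to \emph{derive} $e_j$ by a bracket, $[\cos(x_j)e_j,\sin(x_j)e_j]=e_j$. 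In your framework the repair is immediate: for any $\omega\in\Z^d\setminus\{0\}$ take $\sigma=-\omega$, so $\omega\cdot\sigma=-|\omega|^2\neq 0$, $\sigma-\omega=-2\omega$, and your key identity gives $[V_\omega,V_{-\omega}]=2i|\omega|^2\,\omega$; dividing by the nonzero complex scalar $2i|\omega|^2$ yields the constant real field $\omega$, hence all of $\R^d$ (in particular every $e_k$) lies in ${\rm Lie}(\mathfrak{G})$. Once this is added, the whole proof goes through.

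Comparing the two routes: the paper builds the monomials by hand in six steps, starting from $\cos(\ell x_j)e_j$, $\sin(\ell x_j)e_j$ as gradients, obtaining $e_j$ and then $\cos(x_j)e_k$, $\sin(x_j)e_k$ ($j\neq k$) via explicit real brackets, and concluding by a double induction on the frequency $\ell$ and on the number of nonzero $\ell_i$'s, each step an explicit trigonometric bracket computation. Your passage to the complexification $\mathfrak{G}_\C$ and the plane waves $V_\omega=\omega e^{i\omega\cdot x}=\nabla(-ie^{i\omega\cdot x})$ compresses all of the paper's Steps 2--5 into the single identity $[V_\omega,V_\sigma]=i(\omega\cdot\sigma)(\sigma-\omega)e^{i(\omega+\sigma)\cdot x}$ plus a short lattice span argument; the trigonometric bookkeeping is pushed into elementary product-to-sum identities. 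That is a genuine simplification, at the modest cost of invoking the standard fact ${\rm Lie}_\C(\mathfrak{G}_\C)={\rm Lie}(\mathfrak{G})+i\,{\rm Lie}(\mathfrak{G})$. Your density step (Fej\'er approximation in $W^{1,\infty}$, Gronwall/ODE continuity of flows, dominated convergence) coincides in substance with the paper's Step~6.
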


The proof of Proposition \ref{prop:algebra-torus} is inspired by \cite[Lemma 6.5]{agrachev-sarychev3} (we refer also to \cite{agrachev-ensemble-2,agrachev-sarychev2} for similar Lie bracket methods, and to \cite{agrachev-caponigro,trelat-2017} for previous controllability results of the group of diffeomorphisms).

The main difference and difficulty here w.r.t. this literature is that we only have access to \emph{gradient} vector fields.
\begin{proof}
\noindent \emph{Step 1: $\frak{L}_0$ contains any vector field of the form $f(x)=\cos(\ell x_j) e_j$ (resp. $\sin(\ell x_j) e_j$) for $j \in \{1,\dots,d\}$ and $\ell \in \N^*$.} Indeed $f=\nabla \varphi$ where
$\varphi(x)=\sin(\ell x_j)/\ell$ (resp. $\cos(\ell x_j)/\ell$).

\medskip

\noindent \emph{Step 2: $\frak{L}_0$ contains $f(x)= e_j$ for any $j \in \{1,\dots,d\}$.} By Step 1, $\frak{L}_0$ contains
\begin{align}
[\cos(x_j) e_j , \sin(x_j) e_j]
& = 
\cos(x_j) \frac{\partial}{\partial x_j} \left( \sin(x_j) e_j \right) -
\sin(x_j) \frac{\partial}{\partial x_j} \left( \cos( x_j ) e_j \right)
\\ & = (\cos^2(x_j)  +\sin^2(x_j)) e_j = e_j.
\end{align}

\medskip

\noindent \emph{Step 3: $\frak{L}_0$ contains $f(x)=\cos(x_j) e_k$ (resp. $\sin(x_j) e_k$) for any $j \neq k \in \{1,\dots,d\}$.}
Indeed, $\frak{L}_0$ contains
$$-\frac{1}{2}[\nabla \sin(x_j) \sin(x_k) , \nabla \cos(x_k) ] + \frac{1}{2} [\nabla \sin(x_j) \cos( x_k) , \nabla \sin (x_k)] 
= \sin(x_j) e_k$$
$$\text{(resp. } \quad
\frac{1}{2}[\nabla \cos(x_j) \cos(x_k) , \nabla \sin(x_k) ] -\frac{1}{2} [\nabla \cos(x_j) \sin( x_k) , \nabla \cos (x_k)] 
= \cos(x_j) e_k \text{).}
$$
These elementary calculations can be carried out as in Step 2.

\medskip

\noindent \emph{Step 4: We prove by induction on $\ell \in \N$ that
$\frak{L}_0$ contains $f(x)=\cos( \ell x_j) e_k$ (resp. $\sin(\ell x_j) e_k$) for every $j \neq k \in \{1,\dots,d\}$ and $\ell \in \N^*$.} The initialization for $\ell=1$ is given by Step 3. 
We assume the property proved up to $\ell$. 
By Step 1 and the induction assumption, $\frak{L}_0$ contains
$$
\frac{1}{\ell}[\sin(x_j) e_j , \cos(\ell x_j) e_k] + 
\frac{1}{\ell}[\cos(x_j)e_j , \sin(\ell x_j) e_k ]
=
\cos((\ell+1)x_j) e_k,
$$
$$
\frac{1}{\ell}[\sin(x_j) e_j , \sin(\ell x_j) e_k] - 
\frac{1}{\ell}[\cos(x_j)e_j , \cos(\ell x_j) e_k ]
 =
\sin((\ell+1)x_j) e_k.
$$

\medskip

\noindent \emph{Step 5: We prove the first statement of Proposition \ref{prop:algebra-torus} by induction on the degree of $f$, defined by $d(f):=\sharp\{ i \in \{1,\dots,d\} ; \ell_i \neq 0\}$.} The initialization for $d(f)=1$ is given by Step 4. Let $s\in\N^*$. We assume the property proved for any monomial of degree $\leq s$. Let $f$ be a monomial with degree $(s+1)$. Then $f(x)=g(x)\cos( \ell_{\alpha} x_{\alpha}) e_k$ or
$g(x)\sin( \ell_{\alpha} x_{\alpha}) e_k$
where $g$ is a monomial with degree $s$ independent of $x_{\alpha}$.

\medskip

\noindent \emph{First case: $g(x)$ does not depend on $x_k$.} By the induction assumption and Step 4, $\frak{L}_0$ contains
$$\frac{1}{\ell_{\alpha}} [g(x) e_{\alpha} , 
\sin( \ell_{\alpha} x_{\alpha} ) e_k ] = 
g(x) \cos( \ell_{\alpha} x_{\alpha}) e_k,$$
$$-\frac{1}{\ell_{\alpha}} [g(x) e_{\alpha} , 
\cos( \ell_{\alpha} x_{\alpha} ) e_k ] = 
g(x) \sin( \ell_{\alpha} x_{\alpha}) e_k.$$

\medskip

\noindent \emph{Second case: $g(x)$ depends on $x_k$.} Then $\alpha \neq k$. Thus there exists a monomial $g_1(x)$ of degree $s$ such that $g(x)=\partial_{x_k} g_1(x)$. By induction assumption and Step 4, $\frak{L}_0$ contains
$$[\cos(\ell_{\alpha} x_{\alpha}) e_k , g_1(x) e_k] = \cos(\ell_{\alpha}) g(x) e_k,$$
$$[\sin(\ell_{\alpha} x_{\alpha}) e_k , g_1(x) e_k] = \sin(\ell_{\alpha}) g(x) e_k.$$

\medskip

\noindent \emph{Step 6: We prove Theorem \ref{thm:gradient-algebra} for $M=\T^d$.} Let $f \in {\rm Vec}(\T^d)$. By applying Fejer theorem to $f$ and 
$\nabla f$, we obtain a sequence $(f_k)_{k \in \N}$ of trigonometric polynomials vector fields such that $\|f_{k}-f\|_{W^{1,\infty}} \to 0$ as $k \to \infty$. 

%By the first statement of Proposition \ref{prop:algebra-torus} and Proposition \ref{Prop:Lie_alg},
%for every $k\in\N$, the operator $e^{\mathcal{T}_{f_{k}}}$ is $L^2$-STAR. By Lemma \ref{lem:reachable-operators}, to prove that $e^{\mathcal{T}_{f}}$ is $L^2$-STAR, 

It suffices to prove that
\begin{equation} \label{CV}
\forall \psi \in L^2(\T^d,\C), \quad
\|(e^{\mathcal{T}_{f_{k}}} - e^{\mathcal{T}_{f}})\psi \|_{L^2} \underset{\epsilon \to 0}{\longrightarrow} 0.
\end{equation}
It suffices to prove (\ref{CV}) for every $\psi$ in a dense subset of $L^2(\T^d,\C)$ because the operators $(e^{\mathcal{T}_{f_{k}}}-e^{\mathcal{T}_f})$ are bounded on $L^2$ uniformly with respect to $k$. So we consider $\psi \in L^{\infty}(\T^d,\C)$ . Gronwall Lemma proves that
\begin{equation} \label{flot_proches}
\forall t \in \R, \qquad
\| \phi_{f}^t - \phi_{f_{k}}^t \|_{L^{\infty}} \leq t \|f-f_{k} \|_{L^{\infty}} e^{t \|f\|_{C^1}}.
\end{equation}
Thus, for every $x \in \T^d$,
\begin{equation} \label{intdiv_proches}
%\begin{aligned}
%&  
\int_0^1 \left| {\rm div} f (\phi_f^s(x))  - 
 {\rm div} f_{k} (\phi_{f_{k}}^s(x)) 
\right| ds
%\\  \leq &  \int_0^1 
%|{\rm div} f (\phi_f^s(x)) - 
% {\rm div} f (\phi_{f_{k}}^s(x))| + 
%|{\rm div} f (\phi_{f_{k}}^s(x)) -  
% {\rm div} f_{\epsilon} (\phi_{f_{k}}^s(x))| ds
%\\  
\leq 
%&  
\|f\|_{C^2} \|f-f_{k}\|_{L^{\infty}} e^{\|f\|_{C^1}} + \|f-f_{k}\|_{W^{1,\infty}} \underset{k \to \infty}{\longrightarrow} 0.
%\end{aligned}
\end{equation}
For  almost every $x \in \T^d$, we have the convergence 
$e^{\mathcal{T}_{f_{k}}} \psi (x) \to  
e^{\mathcal{T}_{f}}  \psi (x)$ as $k \to \infty$ and 
the domination, for $k$ large enough
$|e^{\mathcal{T}_{f_{k}}} \psi (x)| \leq 
\|\psi\|_{L^{\infty}} e^{\|f\|_{C^1}+1}$.
The dominated convergence theorem proves that 
$\|(e^{\mathcal{T}_{f_{k}}} - e^{\mathcal{T}_{f}})\psi \|_{L^2} \to 0$ as $\epsilon \to 0$.
\end{proof}

%---------------------------------------------------
\subsection{On $\R^d$}

\begin{proposition}\label{prop:algebra-euclidean space}
Let $M=\R^d$. Then, $\frak{L}_0:={\rm Lie}(\frak{G})$ contains any linear combination of vector fields of the form
\begin{equation} \label{Hermite}
\partial_{x_1}^{n_1}\dots\partial_{x_d}^{n_d}\, 
e^{-|x|^2/2}e_j 
\end{equation}
where $n=(n_1,\dots,n_d) \in\N^d$ and $j \in \{1,\dots,d\}$.
Hence Theorem \ref{thm:gradient-algebra} on $\R^d$ holds.
\end{proposition}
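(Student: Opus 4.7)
The plan is to mirror the toroidal strategy of Proposition \ref{prop:algebra-torus}: produce, via explicit Lie bracket computations inside $\mathfrak{G}$, enough vector fields of the form \eqref{Hermite} to exhaust the whole family by linearity, and then invoke density of Hermite expansions to deduce Theorem \ref{thm:gradient-algebra}. Two features distinguish the $\R^d$ setting from $\T^d$: (i) the scalar potentials used in $\mathfrak{G}$ must satisfy the integrability constraints of \eqref{def:G_gothique}, so one typically works with Schwartz functions; and (ii) the bracket of two gradient fields built from $e^{-|x|^2/2}$ produces a Gaussian factor $e^{-|x|^2}$, not $e^{-|x|^2/2}$, so the key bracket must be performed at a rescaled Gaussian width.

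I would first populate $\mathfrak{L}_0$ with base elements: the constants $e_j$ lie in $\mathfrak{G}$ via the ``$\nabla\varphi = \mathrm{const.}$'' clause, and $\nabla\varphi \in \mathfrak{G}$ for every Schwartz $\varphi$. Setting $\rho := e^{-|x|^2/4}$ so that $\rho^2 = e^{-|x|^2/2}$, a Hessian computation then yields the key bracket
\[
[\nabla\rho,\,\nabla(x_j\rho)] \;=\; \tfrac{1}{2}\bigl(1+\tfrac{|x|^2}{2}\bigr)e^{-|x|^2/2}\,e_j \;\in\; \mathfrak{L}_0,
\]
and more generally, for any polynomial $P$,
\[
[\nabla\rho,\,\nabla(P\rho)] \;=\; \tfrac{1}{2}e^{-|x|^2/2}\sum_{k=1}^d \bigl[(1+\tfrac{|x|^2}{2} - \mathcal{E})\partial_{x_k}P\bigr]\,e_k,
\]
with $\mathcal{E} = \sum_\ell x_\ell\partial_{x_\ell}$; the analogous brackets $[\nabla(x_i\rho),\nabla(x_j\rho)]$ furnish further starting polynomial-times-Gaussian vector fields. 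Bracketing with constants then acts as a derivation within $\mathfrak{L}_0$:
\[
[e_k,\,Q\,e^{-|x|^2/2}\,e_j] \;=\; (\partial_{x_k}Q - x_kQ)\,e^{-|x|^2/2}\,e_j.
\]
Iterating this derivation on each polynomial produced above, and taking linear combinations, I aim to reach every $P(x)e^{-|x|^2/2}e_j$ for polynomial $P$ and index $j$. Since $\partial^n e^{-|x|^2/2} = (-1)^{|n|}H_n(x)e^{-|x|^2/2}$ for the probabilistic Hermite polynomials $\{H_n\}_{n\in\N^d}$ spanning $\R[x]$, this yields the first assertion of the proposition.

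The ``hence'' part then transposes Step 6 of the proof of Proposition \ref{prop:algebra-torus}: given $f=(f_1,\dots,f_d)\in {\rm Vec}_c(\R^d)$, each component $f_j\in C_c^\infty(\R^d)\subset\mathcal{S}(\R^d)$ is approximated in the Schwartz (hence $W^{1,\infty}(\R^d)$) topology by finite Hermite expansions $g_{j,m}=\sum_n c_{j,n,m}\,\partial^n e^{-|x|^2/2}$; the vector fields $f_m := \sum_j g_{j,m}e_j$ lie in $\mathfrak{L}_0$ with $\|f_m-f\|_{W^{1,\infty}}\to 0$, and Gronwall on the flows combined with dominated convergence on $L^\infty$-bounded $\psi$ yields $\|(e^{\mathcal{T}_{f_m}}-e^{\mathcal{T}_f})\psi\|_{L^2}\to 0$. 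The principal obstacle is the spanning step: iterating $L_k := \partial_{x_k}-x_k$ on the single starting polynomial $1+|x|^2/2$ produces only a proper subspace of $\R[x]$ --- in the Hermite basis it is the image of the map $h_n \mapsto (1+d/2)h_n + \tfrac{1}{2}\sum_k h_{n+2\varepsilon_k}$, where $\varepsilon_k$ denotes the $k$-th unit multi-index --- so closing the algebra requires combining the full family of starting polynomials supplied by the various brackets above and verifying that the $L_k$-orbit of this family exhausts $\R[x]^d$ componentwise, an algebraic step that replaces the clean degree induction on trigonometric monomials available on $\T^d$.
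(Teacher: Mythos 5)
Your strategy is the paper's strategy in spirit, and you correctly find the opening bracket
\[
[\nabla e^{-|x|^2/4},\nabla(x_j e^{-|x|^2/4})]=\tfrac{2+|x|^2}{4}\,e^{-|x|^2/2}\,e_j
\]
and the derivation rule $[e_k,\,\phi\,e_j]=(\partial_{x_k}\phi)\,e_j$ (equivalently, in the polynomial variable, $Q\mapsto \partial_{x_k}Q-x_kQ$). But you then explicitly flag ``the spanning step'' as the principal obstacle and leave it open: you observe, correctly, that iterating $L_k=\partial_{x_k}-x_k$ on the single starting polynomial $1+|x|^2/2$ does not exhaust $\R[x]$, and you defer to some unspecified ``verification'' on the orbit of a family of brackets $[\nabla(x_i\rho),\nabla(x_j\rho)]$ that you do not compute. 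This is a genuine gap --- the proposal as written does not prove the proposition.

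The paper closes this gap with one additional explicit bracket and a short algebraic cancellation whose sole purpose is to isolate the \emph{pure} Gaussian field $m_j=e^{-|x|^2/2}e_j$. Namely, with $h_j=(2+|x|^2)e^{-|x|^2/2}e_j$ already in hand, one also computes
\[
k_j:=-8\bigl[\nabla\bigl(\tfrac{x_j^2}{2}e^{-|x|^2/4}\bigr),\ \nabla\bigl(x_j e^{-|x|^2/4}\bigr)\bigr]=(8-2x_j^2+x_j^2|x|^2)\,e^{-|x|^2/2}\,e_j,
\]
then ${\rm ad}_{e_j}^2 h_j=(-2x_j^2-|x|^2+x_j^2|x|^2)e^{-|x|^2/2}e_j$, so that
\[
\ell_j:=k_j-{\rm ad}_{e_j}^2 h_j=(8+|x|^2)e^{-|x|^2/2}e_j,\qquad m_j:=\tfrac{1}{6}(\ell_j-h_j)=e^{-|x|^2/2}e_j.
\]
Once $m_j$ is available, the Hermite family \eqref{Hermite} follows immediately by ${\rm ad}_{e_1}^{n_1}\cdots{\rm ad}_{e_d}^{n_d}m_j=\partial_{x_1}^{n_1}\cdots\partial_{x_d}^{n_d}e^{-|x|^2/2}\,e_j$, with no further spanning argument required. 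This is exactly the missing algebraic step: you need a second ``seed'' bracket (the $k_j$ above), designed so that subtracting an iterated adjoint of $h_j$ kills the $|x|^2$-dependent prefactor and leaves the bare Gaussian. Your density/Fejér-style conclusion (Step~3 of the proposition, mirroring Step~6 on $\T^d$) is fine once this is fixed.
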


\begin{proof} 
\emph{Step 1: We prove the first statement.}
Let $n \in\N^d$ and $j \in \{1,\dots,d\}$.
By definition of $\frak{L}_0$, this Lie algebra contains the following vector fields 
\begin{align*}
& h_j:= -4[\nabla x_j e^{-|x|^2/4},\nabla e^{-|x|^2/4}]=(2+|x|^2)e^{-|x|^2/2}e_j,
\\
& k_j:= -8[\nabla \frac{x^2_j}{2} e^{-|x|^2/4},\nabla x_je^{-|x|^2/4}]=(8-2x^2_j +x^2_j |x|^2)e^{-|x|^2/2}e_j, 
\\
& {\rm ad}_{e_j}^2 h_j=(-2x_j^2-|x|^2+x_j^2|x|^2)e^{-|x|^2/2}e_j,
\\
& \ell_j := k_j-{\rm ad}_{e_j}^2h_j=(8+|x|^2)e^{-|x|^2/2}e_j,
\\
& m_j := (\ell_j-h_j)/6=e^{-|x|^2/2}e_j,
\\
& {\rm ad}_{e_1}^{n_1}\dots {\rm ad}_{e_d}^{n_d} m_j =
\partial_{x_1}^{n_1}\dots\partial_{x_d}^{n_d}e^{-|x|^2/2}e_j.
\end{align*}

\noindent \emph{Step 2: Density of $\frak{L}_0$.} By Step 1,
$$\text{Span}\{ \psi_n(x) e_j\, ;\, n \in \N^d, j \in \{1,\dots,d\} \}
\subset \frak{L}_0, $$
where the $\psi_n$ are the Hermite functions on $\R^d$. Thus $\frak{L}_0$ is dense in $L^2(\R^d)$, but it is also, for any $s\geq 0$, dense in $L^2((1+|x|^2)^s dx)$ and in $H^s(\R^d)$ (by Plancherel, because $\widehat{\psi}_n=(i)^{|n|} \psi_n$).

\medskip

\noindent \emph{Step 3: We prove Theorem \ref{thm:gradient-algebra} for $M=\R^d$.} Let $f \in C^{\infty}_c(\R^d,\R^d)$.
By Step 2, there exists a sequence 
$(f_{k})_{k\in\N} \subset \frak{L}_0$
such that $\|f-f_{k}\|_{W^{1,\infty}} \to 0$ as $k \to \infty$. 

%By Proposition \ref{Prop:Lie_alg-euclidean space}, for every $k\in\N$, the operator $e^{\mathcal{T}_{f_{k}}}$ is $L^2$-STAR. By Lemma \ref{lem:reachable-operators}, to prove that $e^{\mathcal{T}_f}$ is $L^2$-STAR, 

It suffices to prove that
\begin{equation} \label{bla}
\forall \psi \in L^2(\R^d,\C)\,, \qquad 
\| (e^{\mathcal{T}_{f_{k}}}-e^{\mathcal{T}_f})\psi \|_{L^2} \underset{\epsilon \to 0}{\longrightarrow} 0.
\end{equation}
It suffices to prove (\ref{bla}) for any $\psi$ in a dense subset of $L^2$ because the operators $(e^{\mathcal{T}_{f_{k}}}-e^{\mathcal{T}_f})$ are bounded on $L^2$ uniformly with respect to $k$. So we consider $\psi \in C^{\infty}_c(\R^d,\C)$. By (\ref{flot_proches}) and (\ref{intdiv_proches}), for almost every $x \in \R^d$, we have the convergence 
$e^{\mathcal{T}_{f_{k}}} \psi (x) \to  
e^{\mathcal{T}_{f}}  \psi (x)$ as $k \to \infty$ and 
the domination
$|e^{\mathcal{T}_{f_{k}}} \psi (x)| \leq \|\psi\|_{\infty} e^{\|f\|_{\infty}+1} 1_{K}(x)$ where $K$ is a compact subset of $\R^d$ that contains $\cup_{k\in\N} \phi_{f_k}^1(\text{Supp}(\psi))$.
The dominated convergence theorem gives the conclusion.
\end{proof}

 %Hence, thanks to Proposition \ref{prop:lie-algebra}, in order to prove that any operator $\mathcal{L}_P, P\in {\rm Diff}_c^0(M),$ is $L^2$-STAR for systems \eqref{eq:torus} and \eqref{eq:line}, it suffices to prove that any operator $\mathcal{L}_{\phi_{\nabla \varphi}^1}$, $\nabla\varphi\in \mathfrak{G}$, is $L^2$-STAR. 

%%%%%%%%%%%%%%%%%%%%%%%%%%%%%%%%%%%%%%%%%%%%%%%%%%%
\appendix
\section{Appendix}

%--------------------------------------------------
\subsection{Well posedness for piecewise constant controls}
\label{App:WP}

In light of Propositions \ref{prop:self-adjointness} and
\ref{prop:kato-rellich} below, we can define the solutions 
of the two systems 
%%
%\eqref{eq:oscillator_BIS},
\eqref{eq:torus}, 
\eqref{eq:line},
associated with piecewise constant controls, by composition of time-independent unitary propagators associated with self-adjoint operators (see, e.g., \cite[Definition p.256 \& Theorem VIII.7]{rs1}). For instance, for system \eqref{eq:line}, 
given a subdivision $0=t_0<\dots<t_N=T$,
a piecewise constant control 
$u:[0,T]\to \mathbb{R}^{d+1}$ defined as 
$u(t)=(u^{j}_1,\dots,u^{j}_{d+1})\in\mathbb{R}^{d+1}$ when 
$t \in [t_{j-1},t_j]$, 
and an initial condition $\psi_0 \in L^2(\R^d,\C)$,
the solution $\psi\in C^0([0,T],L^2(\R^d,\mathbb{C}))$ of \eqref{eq:line} is defined by
\begin{equation} \label{eq:propagator}
\psi(t;u,\psi_0)
=e^{-i(t-t_{j-1})A_j}
e^{-i \tau_{j-1}A_{j-1}}\\
\dots
e^{-i\tau_1 A_1} \psi_0
\end{equation}
where $\tau_l=(t_l-t_{l-1})$ and
$A_l = -\Delta+V+\sum_{k=1}^du_k^{l}x_k-u_{d+1}^{l} e^{-|x|^2/2}$
for $l=1,\dots,N$.

\begin{proposition}\label{prop:self-adjointness}{\cite[Corollary page 199]{rs2}}
%\begin{itemize}
%\item[(i)] 
Let 
%$A\in C^1(\R^d,\R^d)$ and 
$V$ satisfying (\ref{Hyp:V_transp}).
%$V\in L^2_{\rm loc}(\R^d,\R)$. If there exist $a,b\geq 0$ such that
%$$V(x)\geq -a\|x\|^2-b,$$
Then 
%$-\Delta_A+V$ 
$-\Delta+V$
is essentially self-adjoint on $C^\infty_c(\R^d,\C)$.
%\item[(ii)] (Reed-Simon 2: Theorem X.34) Let $V=V_1+V_2$, $V_1\geq 0, V_1\in L^2_{\rm loc}(\R^m,\R)$,$V_2$ $\Delta$-bounded with relative bound $a<1$. Let $a_j\in C^1(\R^d,\R)$. Then, 
%$$-\sum_{j=1}^d(\partial_{x_j}-ia_j)^2+V$$
%is essentially self-adjoint on $C^\infty_c(\R^d,\C)$.
%\end{itemize}
\end{proposition}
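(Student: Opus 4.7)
The plan is to apply the Faris--Lavine commutator theorem (Reed--Simon, \emph{Methods of Modern Mathematical Physics}, Vol.~II, Theorem X.38), which is tailored exactly to Schrödinger operators with potentials bounded below by a negative multiple of $|x|^2$. The auxiliary operator against which I would compare $H := -\Delta + V$ is the shifted harmonic oscillator $N := -\Delta + |x|^2 + C$, with $C = C(a,b)$ chosen large enough to make $\overline{N} \geq 1$ and the form inequality below valid. It is classical that $N$ is essentially self-adjoint on $C_c^\infty(\R^d,\C)$ with Schwartz space as a core.

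First I would check that $H$ is symmetric on $C_c^\infty(\R^d,\C)$, which is immediate from $V \in L^2_{\rm loc}(\R^d,\R)$. Next I would verify the Faris--Lavine semi-boundedness hypothesis using only the pointwise bound $V(x) \geq -a|x|^2 - b$: integration by parts gives, for $\psi \in C_c^\infty(\R^d,\C)$,
\begin{equation*}
\langle \psi, H\psi \rangle \,\geq\, \|\nabla \psi\|_{L^2}^2 - a\,\||x|\psi\|_{L^2}^2 - b\,\|\psi\|_{L^2}^2 \,\geq\, -\beta \,\langle \psi, N\psi \rangle
\end{equation*}
for some $\beta = \beta(a,b) > 0$, so $H + \beta N \geq 0$ as a form on $C_c^\infty(\R^d,\C)$. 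The key step is then the commutator bound $\pm i[H, N] \leq \gamma N$ as a sesquilinear form on $C_c^\infty(\R^d,\C) \times C_c^\infty(\R^d,\C)$: the kinetic/potential cross-commutator $[-\Delta,|x|^2] = -2(x\cdot\nabla + \nabla\cdot x)$ is a first-order operator easily controlled by $N$, while the symmetric contributions from $V$ cancel when one writes the commutator in its quadratic-form expression $\langle H\psi, N\psi\rangle - \langle N\psi, H\psi\rangle$, avoiding any appeal to derivatives of $V$.

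The main obstacle I expect is the rigorous handling of the $V$-dependent part of the commutator when $V$ is only $L^2_{\rm loc}$. The cleanest workaround is to avoid the commutator theorem altogether and instead use Kato's distributional inequality: any $\varphi \in L^2(\R^d,\C)$ in the deficiency space satisfies $(-\Delta + V \pm i)\varphi = 0$ in $\mathcal{D}'(\R^d)$, whence Kato's inequality gives $\Delta|\varphi| \geq \mathrm{Re}(\overline{\mathrm{sgn}\,\varphi}\,\Delta\varphi) \geq (V - 1)|\varphi|$ distributionally. Testing against a Gaussian weight $e^{-\epsilon|x|^2}\chi_R(x)$ with $\epsilon$ sufficiently small (relative to $a$) and letting $R \to \infty$ produces a weighted energy inequality that forces $|\varphi| \equiv 0$. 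Both deficiency indices then vanish, yielding essential self-adjointness on $C_c^\infty(\R^d,\C)$. A useful preliminary reduction is to split $V = V_+ - V_-$: the positive part $V_+ \in L^2_{\rm loc}$ is treated by Kato's theorem on nonnegative potentials (RS~II, Theorem X.28), and the negative part $V_- \leq a|x|^2 + b$ is the genuinely quadratic perturbation controlled by the Gaussian-weight argument.
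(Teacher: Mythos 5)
The paper itself offers no proof of this proposition; it simply cites the Corollary on page 199 of Reed--Simon Vol.~II. You have correctly located the relevant result (the Faris--Lavine theorem and its corollary), but both arguments you sketch contain a genuine gap. In the Faris--Lavine route with $N := -\Delta + |x|^2 + C$, the assertion that ``the symmetric contributions from $V$ cancel'' in the form commutator $\langle H\psi, N\psi\rangle - \langle N\psi, H\psi\rangle$ is false: writing $\psi = u + iv$, the $V$-dependent part of that form is $2i\int V\,(v\Delta u - u\Delta v)\,dx$, i.e.\ a divergence $\nabla\cdot(v\nabla u - u\nabla v)$ integrated against $V$, so bounding it requires a $\nabla V$ after integration by parts, which is unavailable for $V \in L^2_{\rm loc}$ only; this term cannot be dominated by $\langle\psi, N\psi\rangle$. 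You then flag exactly this as ``the main obstacle,'' contradicting the earlier cancellation claim. The remedy is a different auxiliary operator: take $N := -\Delta + V + (a+1)|x|^2 + (b+1)$. Its potential part is $\geq |x|^2 + 1$ and lies in $L^2_{\rm loc}$, so $N$ is essentially self-adjoint on $C_c^\infty(\R^d,\C)$ with $\overline{N} \geq 1$ by Kato's Theorem~X.28, and $N - H = (a+1)|x|^2 + (b+1)$ is a smooth quadratic. The form commutator becomes $2i\,\mathrm{Im}\,\langle N\psi, (N-H)\psi\rangle$; the $V$--against--$(N-H)$ term is real, and the only surviving contribution, $\mathrm{Im}\,\langle -\Delta\psi, (N-H)\psi\rangle$, involves only $\nabla(N-H) = 2(a+1)x$ and is easily dominated by $\langle\psi, N\psi\rangle$. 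No derivative of $V$ appears, and the Faris--Lavine hypotheses (including $H + \lambda N \geq 0$ for $\lambda \geq \max(a,b)$) check out.

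The Kato-inequality fallback, as written, also cannot conclude. Kato's inequality gives $\Delta|\varphi| \geq V|\varphi|$ (the $\pm i$ contributes nothing to the real part, and there is no $-1$), hence $(\Delta + a|x|^2 + b)|\varphi| \geq 0$ in $\mathcal{D}'(\R^d)$ with $|\varphi| \in L^2$, $|\varphi| \geq 0$. But this does \emph{not} force $|\varphi| \equiv 0$: in $\R^d$, the nonzero $L^2$ function $u = e^{-|x|^2/2}$ satisfies $\Delta u = (|x|^2 - d)u \geq -d\,u$, so it is a nonnegative $L^2$ subsolution for the admissible constant potential $V \equiv -d$ (take $a = 0$, $b = d$ in \eqref{Hyp:V_transp}). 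Passing from $\varphi$ to $|\varphi|$ throws away the imaginary part of the equation, which is precisely what carries the information that $\varphi$ is a deficiency vector rather than, say, a ground state. A Gaussian-weight argument therefore cannot be powered by the subsolution inequality alone; it has to test the full complex equation $\Delta\varphi = (V \pm i)\varphi$ against $w^2\overline{\varphi}$, take the imaginary part to produce the coercive term $\pm\int w^2|\varphi|^2$, and control the $\nabla w$ errors via the real part together with $-V \leq a|x|^2 + b$. That is a materially different and more delicate argument than ``Kato's inequality plus dominated convergence.'' The proposed split $V = V_+ - V_-$ does not obviously repair this either: Theorem~X.28 gives essential self-adjointness of $-\Delta + V_+$, but the residual perturbation $-V_-$ has quadratic growth and is not a Kato--Rellich perturbation, so the two pieces cannot simply be recombined.
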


\begin{definition}
Given two densely defined linear operators $A$ and $B$ with domains $D(A)$ and $D(B)$ on an Hilbert space $\mathcal{H}$, $B$ is said to be $A$-bounded if $D(A)\subset D(B)$ and
there exist $a,b\geq 0$ such that for all $\psi\in D(A)$
$$\|B\psi\|<a\|A\psi\|+b\|\psi\|.$$
The infimum of such $a$ is called the relative bound of $B$.
\end{definition}

\begin{proposition}\label{prop:kato-rellich}{(Kato-Rellich Theorem)\cite[Theorem X.12]{rs2}}
If $A$ is self-adjoint and $B$ is symmetric and $A$-bounded with relative bound $a<1$, then $A+B$ is self-adjoint on $D(A)$ and essentially self-adjoint on any core of $A$.
\end{proposition}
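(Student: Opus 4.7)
The plan is to prove the Kato–Rellich theorem by the standard resolvent/Neumann series argument, which reduces self-adjointness of $A+B$ to showing that $A+B \pm i\mu$ is surjective for some $\mu>0$, i.e.\ that $\operatorname{Ran}(A+B\pm i\mu) = \mathcal{H}$. Since $A+B$ is clearly symmetric on $D(A)$ (both $A$ and $B$ are symmetric and $D(A)\subset D(B)$), the basic criterion for self-adjointness reduces the problem to this range condition.

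First I would exploit the self-adjointness of $A$: for every $\mu>0$, $(A\pm i\mu)$ is a bijection from $D(A)$ onto $\mathcal{H}$, and for $\psi \in \mathcal{H}$, setting $\phi := (A\pm i\mu)^{-1}\psi \in D(A)$, the orthogonality identity $\|(A\pm i\mu)\phi\|^2 = \|A\phi\|^2 + \mu^2\|\phi\|^2$ yields the two bounds
\begin{equation*}
\|A\phi\| \le \|\psi\|, \qquad \|\phi\| \le \frac{1}{\mu}\|\psi\|.
\end{equation*}
Combining these with the $A$-boundedness hypothesis $\|B\phi\| \le a\|A\phi\| + b\|\phi\|$ gives
\begin{equation*}
\|B(A\pm i\mu)^{-1}\psi\| \le \left(a + \frac{b}{\mu}\right)\|\psi\|.
\end{equation*}
Since $a<1$, one can fix $\mu$ large enough that $C_\mu := a + b/\mu < 1$, so $\|B(A\pm i\mu)^{-1}\| \le C_\mu < 1$ as a bounded operator on $\mathcal{H}$.

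The second step is the algebraic factorization
\begin{equation*}
A+B \pm i\mu = \bigl(I + B(A\pm i\mu)^{-1}\bigr)(A\pm i\mu)
\end{equation*}
on $D(A)$. The right-hand factor is a bijection $D(A)\to\mathcal{H}$, and the left-hand factor is invertible on $\mathcal{H}$ by the Neumann series (since $\|B(A\pm i\mu)^{-1}\| < 1$). Therefore $A+B\pm i\mu : D(A)\to\mathcal{H}$ is a bijection, so $\operatorname{Ran}(A+B\pm i\mu)=\mathcal{H}$ and the basic criterion delivers self-adjointness of $A+B$ on $D(A)$.

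Finally, for essential self-adjointness on any core $D$ of $A$: denote $T:=(A+B)|_D$, which is symmetric. I would show $\operatorname{Ran}(T\pm i\mu)$ is dense in $\mathcal{H}$ for the same large $\mu$. Given $\psi\in\mathcal{H}$, pick $\phi\in D(A)$ with $(A+B\pm i\mu)\phi = \psi$, and approximate $\phi$ by $\phi_n\in D$ in the graph norm of $A$ (possible since $D$ is a core). Using the estimate $\|B(\phi-\phi_n)\|\le a\|A(\phi-\phi_n)\|+b\|\phi-\phi_n\|$, one gets $(T\pm i\mu)\phi_n \to \psi$ in $\mathcal{H}$, hence the range is dense, so $T$ is essentially self-adjoint and its closure coincides with $A+B$.

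The main obstacle is really just the choice of $\mu$: one needs the strict inequality $a<1$ to make $a+b/\mu<1$ for $\mu$ large; if $a=1$ the Neumann series argument breaks down and a more delicate Wüst-type extension would be required. Everything else is a straightforward resolvent computation using only self-adjointness of $A$ and the definition of relative bound.
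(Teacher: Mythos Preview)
Your proof is correct and is precisely the standard Neumann-series/resolvent argument found in Reed--Simon \cite[Theorem X.12]{rs2}. Note, however, that the paper does not give its own proof of this proposition: it is stated as a citation of that reference, so there is no ``paper's proof'' to compare against beyond the original source, which your argument faithfully reproduces.
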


The assumption $V\in L^\infty$ (resp. $V$ satisfies \eqref{Hyp:V_transp}) given in Theorem \ref{Main_Thm_torus} (resp. Theorem \ref{thm:global-euclidean}) guarantees that $-\Delta+V$  is essentially self-adjoint on $C^\infty(\T^d)$ (resp. $C^\infty_c(\R^d)$). More in general, as proved by Gaffney in \cite{gaffney}, if $V\in L^\infty(M,\R)$ and $M$ is complete, then $-\Delta+V$ is essentially self-adjoint on $C^\infty_c(M,\C)$.

%-------------------------------------------------
\subsection{Semigroup structure of $L^2$-STAR operators} \label{subsec:STAR_op}

In this section, we prove Lemma \ref{lem:reachable-operators} for sake of completeness.

\begin{proof}[Proof of Lemma \ref{lem:reachable-operators}]
\emph{Step 1: Semi-group structure.}
Let $L_1, L_2$ be  $L^2$-STAR operators.
Let $\psi_0 \in \mathcal{S}$ and $\varepsilon>0$. For $j=1,2$, there exists $u_j \in PWC(0,T_j)$ such that
\begin{equation} 
\| \psi(T_2; u_2, L_1 \psi_0 ) -  L_2 L_1 \psi_0 \|_{L^2}< \varepsilon,
\qquad
\| \psi(T_1;u_1,\psi_0)- L_1 \psi_0 \|_{L^2} < \varepsilon .
\end{equation}
We consider the concatenation
$u:=u_1 \sharp u_2 \in PWC(0,T_1+T_2)$. Then
$$\begin{aligned}
& \| \psi(T_1+T_2;u,\psi_0) -  L_2 L_1 \psi_0 \|_{L^2} 
\\ \leq & \| \psi(T_2;u_2,\psi(T_1;u_1,\psi_0))- \psi(T_2;u_2, L_1 \psi_0) \|_{L^2} + \| \psi(T_2;u_2, L_1 \psi_0)-   L_2 L_1 \psi_0\|_{L^2}
\\  \leq &  \| \psi(T_1;u_1,\psi_0)-  L_1 \psi_0 \|_{L^2} +  
\varepsilon
<  2 \varepsilon.
\end{aligned}$$

\noindent \emph{Step 2: Stability by strong convergence.} Let $(L_n)_{n\in\N}$ be a sequence of $L^2$-STAR operators and $L$ be an operator on $L^2(M,\C)$ such that $(L_n)_{n\in\N}$ strongly converges towards $L$, i.e. for every $\psi \in L^2(M,\C)$, $\|(L_n-L)\psi\|_{L^2} \to 0$ as $n \to \infty$.

 Let $\psi_0 \in \mathcal{S}$ and $\varepsilon>0$. There exists $n \in \N$ such that 
$\|(L_n -L)\psi_0\|_{L^2} < \varepsilon/2$. There exists $T \in [0,\epsilon]$, and $u\in \text{PWC}(0,T)$ such that 
$\| \psi(T;u,\psi_0)- L_n \psi_0 \|_{L^2} < \varepsilon/2$. Then 
$
\| \psi(T;u,\psi_0)-L \psi_0 \|_{L^2}
\leq \| \psi(T;u,\psi_0)- L_n \psi_0 \|_{L^2} + 
\| (L_n -L)\psi_0\|_{L^2} < \varepsilon.
$

\end{proof}

%-------------------------------------------------
\subsection{Transitive group action of ${\rm Diff}_c^0(M)$ on densities} \label{subsec:Moser}

In this section, we give precise statements about the transitivity of the group action of ${\rm Diff}_c^0(M)$ (see \eqref{def:Diffc0_action}) on an appropriate set of positive densities over $M$, proved by Moser \cite{moser}. Proposition \ref{lem:transitivity-torus} concerns $M=\T^d$ and is valid more generally on any compact connected manifold $M$.
Proposition \ref{Prop:Moser_Rd} concerns $M=\R^d$.
In any case, the diffeomorphism $P$ is given by simple integrations.

\begin{proposition} \label{lem:transitivity-torus}
Let $\rho_0, \rho_1\in C^\infty(\T^d,(0,\infty))$ be such that $\|\rho_0\|_{L^2}=\|\rho_1\|_{L^2}=1$. 
There exists $P \in {\rm Diff}^0_c(\T^d)$ such that $\rho_1=\mathcal{L}_P(\rho_0)$.
\end{proposition}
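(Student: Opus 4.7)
The plan is to reduce to Moser's classical theorem \cite{moser} on equivalence of volume forms. To each density $\rho_i$ associate the smooth positive volume form $\omega_i := \rho_i^2\, dx$ on $\T^d$, where $dx$ is the standard Riemannian volume form. The $L^2$-normalization yields $\int_{\T^d} \omega_i = \|\rho_i\|_{L^2}^2 = 1$, so $\omega_0$ and $\omega_1$ have the same total mass.

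Since $\T^d$ is a compact, connected, oriented manifold and $\omega_0, \omega_1$ are smooth positive volume forms with equal total integral, Moser's theorem produces a diffeomorphism $Q \in {\rm Diff}^0(\T^d)$, isotopic to the identity, such that $Q^* \omega_1 = \omega_0$. The construction is the classical Moser trick: set $\omega_t := (1-t)\omega_0 + t \omega_1$ (positive for all $t \in [0,1]$); choose a primitive $\alpha$ with $d\alpha = \omega_1 - \omega_0$, which exists because $\omega_1 - \omega_0$ has vanishing integral and hence trivial top-degree de Rham class on $\T^d$; define the time-dependent vector field $X_t$ by $i_{X_t} \omega_t = -\alpha$; and let $Q$ be the time-one map of its non-autonomous flow. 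This yields $Q$ by two elementary integrations, matching the ``simple integrations'' alluded to in the paper.

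Finally translate back to densities. In local coordinates, $Q^* \omega_1 = \rho_1(Q(x))^2 \det(DQ(x))\, dx$, so $Q^* \omega_1 = \omega_0$ reads pointwise $\rho_0(x)^2 = \rho_1(Q(x))^2 \det(DQ(x))$. Since $Q$ is isotopic to the identity, $\det(DQ) > 0$, and taking positive square roots yields $\rho_0 = \mathcal{L}_Q \rho_1$. Setting $P := Q^{-1} \in {\rm Diff}^0(\T^d)$, a direct computation from \eqref{Def:LP} gives $\mathcal{L}_{Q^{-1}} \mathcal{L}_Q = {\rm Id}$, so $\rho_1 = \mathcal{L}_P \rho_0$. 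Since $\T^d$ is compact, every element of ${\rm Diff}^0(\T^d)$ automatically has compact support, hence $P \in {\rm Diff}_c^0(\T^d)$. The only delicate point is bookkeeping: one must invert the diffeomorphism produced by Moser's construction to match the direction of the action $\mathcal{L}_\cdot$ defined in \eqref{Def:LP}; there is no genuine analytic obstacle, as the positivity and smoothness of $\rho_0,\rho_1$ make Moser's hypotheses trivially satisfied.
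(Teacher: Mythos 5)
Your proof is correct, but it takes a genuinely different route from the one in the paper. The paper does \emph{not} invoke Moser's theorem as a black box; instead it gives a self-contained ``Knothe--Rosenblatt'' type construction. In the paper's Step~1, one first handles the case where $\rho_0$ is the constant density $(2\pi)^{-d/2}$: one builds $P$ coordinate-by-coordinate by $d$ explicit one-dimensional integrations of the marginals $h_j$ of $\rho_1^2$, yielding a triangular map whose Jacobian determinant is directly computed to equal $(2\pi)^d\rho_1^2$; the isotopy to the identity is the explicit straight-line path $\alpha\,{\rm Id}+(1-\alpha)P$, checked to be a diffeomorphism for every $\alpha$ by the same argument. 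Step~2 then treats general $\rho_0$ by composing two such maps, $P=P_0^{-1}\circ P_1$. Your approach instead passes to the volume forms $\omega_i=\rho_i^2\,dx$, invokes Moser's theorem (via the Moser trick with a time-dependent vector field $X_t$ solving $i_{X_t}\omega_t=-\alpha$), and translates back by taking square roots and inverting the resulting diffeomorphism; the bookkeeping $\mathcal{L}_{Q^{-1}}\mathcal{L}_Q={\rm Id}$ is correct, and the identification ${\rm Diff}^0(\T^d)={\rm Diff}^0_c(\T^d)$ by compactness is fine. Both arguments are valid. Your version is shorter and conceptually standard, but it leans on de~Rham cohomology and existence of non-autonomous flows; the paper's version is more elementary, fully explicit, and — importantly — it adapts directly to the non-compact case $M=\R^d$ treated in Proposition~\ref{Prop:Moser_Rd}, where the compact form of Moser's theorem does not apply off-the-shelf. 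One small imprecision on your side: calling the Moser trick ``two elementary integrations'' undersells the step of solving the non-autonomous flow ODE; the paper's phrase ``simple integrations'' refers specifically to its triangular one-dimensional integrals.
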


\begin{proof} \emph{Step 1: We prove the result when $\rho_0$ is constant on $\T^d$ i.e. there exists 
$P \in {\rm Diff}^0_c(\T^d)$ such that, for every 
$x \in \T^d$,
$|J_P(x)|^{1/2} (2\pi)^{-d/2}  = \rho_1(x)$.} 
For $j \in \{1,\dots,d\}$, we define smooth positive functions $I_j, h_j: \T^{j} \to (0,\infty)$ by
$$I_d=\rho_1^2 \qquad \text{ and } \qquad 
I_j(x_1,\dots,x_j)=\int_{\T^{d-j}} \rho_1^2(x_1,\dots,x_j,y) dy \quad \text{ for } j\in \{1,\dots,d-1\},$$
$$h_1(x_1)=I_1(x_1) \qquad \text{ and } \qquad
h_j(x_1,\dots,x_{j})=\frac{I_{j}(x_1,\dots,x_j)}{I_{j-1}(x_1,\dots,x_{j-1})} \quad \text{ for } j\in\{2,\dots d\}.$$
Then, for every $j \in \{ 1,\dots d \}$ and $(x_1,\dots,x_j) \in \T^d$,
\begin{equation} \label{prop:hj}
    h_1(x_1)\dots h_j(x_1,\dots,x_j)= I_{j}(x_1,\dots,x_j)
\qquad \text{ and } \qquad
\int_{\T} h_j(x_1,\dots,x_{j-1},\sigma) d\sigma = 1.
\end{equation}
Identifying $\T^d$ with $[0,2\pi[^d$, we define the $C^{\infty}$-map $P:\T^d \rightarrow \T^d$ by
$$P(x_1,\dots,x_d)=\left(
2\pi \int_0^{x_1} h_1 (\sigma)d\sigma,
2\pi \int_0^{x_2} h_2(x_1,\sigma) d\sigma, \dots,
2\pi \int_0^{x_d} h_d(x_1,\dots,x_{d-1},\sigma)d\sigma \right)$$
The 2nd equality of (\ref{prop:hj}) proves that $P$ maps $\T^d$ into $\T^d$ and is surjective on $\T^d$. $P$ is also injective on $\T^d$ because the functions $h_j$ are continuous and positive. 

The same arguments prove that, for every $\alpha \in [0,1]$, the map $\alpha {\rm Id}_{\T^d}+(1-\alpha)P$ is a $C^{\infty}$-diffeomorphism of $\T^d$
(replace $h_j$ by $\frac{\alpha}{2\pi} + (1-\alpha)h_j$ which still satisfies the 2nd equality of (\ref{prop:hj})).
The continuous path $\alpha \in [0,1] \mapsto 
\alpha {\rm Id}_{\T^d}+(1-\alpha)P$  connects $P$ with ${\rm Id}_{\T^d}$ thus $P\in {\rm Diff}^0_c(\T^d)$.

Finally, for every $x=(x_1,\dots,x_d) \in \T^d$, thanks to the triangular form of
$DP(x)$, we obtain
$J_P(x)=(2\pi)^d h_1(x_1) \dots h_d(x_1,\dots,x_d)=(2\pi)^d \rho_1^2(x)$.

\medskip

\noindent \emph{Step 2: Conclusion.} By Step 1, there exist
$P_0, P_1 \in {\rm Diff}^0_c(\T^d)$ such that, for every 
$x \in \T^d$,
$|J_{P_0}(x)|^{1/2}   = (2\pi)^{d/2} \rho_0(x)$ and
$|J_{P_1}(x)|^{1/2}   = (2\pi)^{d/2} \rho_1(x)$. Then $P:=P_0^{-1} \circ P_1 \in  {\rm Diff}^0_c(\T^d)$ and, by the chain rule, for every $x \in \T^d$
$$|J_P(x)|^{1/2} \rho_0(P(x))=|J_{P_0} (P(x))^{-1} J_{P_1}(x)|^{1/2} \rho_0(P(x))= \rho_1(x).$$
\end{proof}

\begin{definition}[$Q_R^d$]
    For $R\in (0,\infty)$ and $d \in \N^*$, we use the notation 
    $Q_R^d:=(-R,R)^d$.
\end{definition}

\begin{proposition} \label{Prop:Moser_Rd}
    Let $\rho_0, \rho_1 \in C^{\infty}(\R^d,[0,\infty))$ and $0<R'<R$ be such that 
\begin{equation} \label{hyp_Moser_line}
\rho_0, \rho_1>0 \text{ on } Q_R^d, \qquad
\int_{Q_R^d} \rho_0^2 = \int_{Q_R^d} \rho_1^2=1, \qquad
\rho_0=\rho_1 \text{ on } \R^d \setminus Q_{R'}^d.
\end{equation}
    Then, there exists $P \in {\rm Diff}^0_c(\R^d)$ such that $\rho_1=\mathcal{L}_P(\rho_0)$.    
\end{proposition}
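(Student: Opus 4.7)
The plan is to adapt Moser's deformation trick to produce a compactly supported diffeomorphism on $\R^d$. I will interpolate between the measures $\rho_0^2\,dx$ and $\rho_1^2\,dx$, realize the interpolation as the pullback by a flow, and use the hypothesis $\rho_0=\rho_1$ on $\R^d\setminus Q_{R'}^d$ to make the generating vector field compactly supported.

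I set $\tilde\rho_t:=(1-t)\rho_1^2+t\rho_0^2$ for $t\in[0,1]$, a smooth family of nonnegative densities, strictly positive on $Q_R^d$ and independent of $t$ outside $Q_{R'}^d$. Writing $\mu_t:=\tilde\rho_t\,dx$, I aim to find a time-dependent vector field $X_t$ compactly supported in $Q_R^d$ and satisfying the continuity equation $\partial_t\tilde\rho_t+{\rm div}(\tilde\rho_t X_t)=0$; its flow $\phi_t$ will then satisfy $\phi_t^*\mu_t=\mu_0$, so that $P:=\phi_1$ obeys $|J_P|(\rho_0^2\circ P)=\rho_1^2$, i.e.\ $\rho_1=\mathcal{L}_P\rho_0$.

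To construct $X_t$ I first solve a divergence equation. The function $g:=\rho_1^2-\rho_0^2$ is smooth, supported in $Q_{R'}^d$, and has zero total integral (the two normalizations in (\ref{hyp_Moser_line}) give $\int_{Q_R^d}g=0$, and $g\equiv 0$ outside $Q_R^d$). Picking any $R'<R''<R$ and applying the classical Bogovskii construction on the Lipschitz domain $Q_{R''}^d$ (see e.g.\ Dacorogna--Moser), one obtains $Y\in C_c^\infty(\R^d,\R^d)$ with ${\rm Supp}(Y)\subset\overline{Q_{R''}^d}\subset Q_R^d$ and ${\rm div}\,Y=g$. Then $X_t:=Y/\tilde\rho_t$ on $Q_R^d$, extended by $0$, is jointly smooth on $[0,1]\times\R^d$ (since $\tilde\rho_t>0$ on a neighborhood of ${\rm Supp}(Y)$ and $Y$ vanishes near $\partial Q_R^d$), with ${\rm Supp}(X_t)\subset\overline{Q_{R''}^d}$ uniformly in $t$. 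The continuity equation holds because $\tilde\rho_t X_t=Y$ and $\partial_t\tilde\rho_t=-g$.

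The remainder is standard Moser. The vector field $X_t$ being compactly supported in $Q_R^d$ and vanishing in a neighborhood of $\partial Q_R^d$, its flow $\phi_t$ is globally defined on $\R^d$, trajectories starting outside $\overline{Q_{R''}^d}$ stay put, trajectories inside remain trapped in $Q_R^d$, and the isotopy $t\mapsto\phi_t$ shows $\phi_1\in{\rm Diff}_c^0(\R^d)$. The Lie derivative identity $\frac{d}{dt}\phi_t^*\mu_t=\phi_t^*(\partial_t\mu_t+\mathcal{L}_{X_t}\mu_t)=0$ yields $\phi_t^*\mu_t=\mu_0$ for all $t\in[0,1]$, and at $t=1$ this reads $|J_{\phi_1}|(\rho_0^2\circ\phi_1)=\rho_1^2$, equivalent to $\rho_1=\mathcal{L}_{\phi_1}\rho_0$. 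The one non-routine ingredient is producing $Y$ with support strictly inside $Q_R^d$: the zero-mean condition on $g$ is exactly what is needed for the solvability of the divergence equation on $Q_{R''}^d$ with homogeneous boundary data, and this is the only substantive input beyond bookkeeping.
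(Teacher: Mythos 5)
Your proof is correct, but it follows a genuinely different route from the paper's. You use Moser's original flow method: interpolate the densities linearly, solve the divergence equation $\mathrm{div}\,Y = \rho_1^2-\rho_0^2$ with $Y$ compactly supported strictly inside $Q_R^d$ (exploiting the zero-mean condition and the fact that the data is supported in $Q_{R'}^d$), divide by the interpolant to get a time-dependent vector field, and integrate the flow. The paper instead builds $P$ directly by a Knothe--Rosenblatt-type triangular construction: it defines marginal ratios $h_j^\ell$ via iterated integrals of $\rho_\ell^2$ and stacks their primitives into an explicit upper-triangular diffeomorphism $P_\ell$ of the cube, then takes $P=P_0^{-1}\circ P_1$ and extends by the identity, using $\rho_0=\rho_1$ on $Q_R^d\setminus Q_{R'}^d$ to check the extension is smooth. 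The paper's route is more elementary (no divergence-equation solver, fully explicit formulas, no ODE flow) and makes isotopy to the identity immediate by linear interpolation of the $h_j$'s; your route is more conceptual and generalizes verbatim to arbitrary manifolds, at the price of importing a Bogovskii-type lemma. One point worth making explicit in your argument is that $\tilde\rho_t$ is uniformly bounded below on the compact set $[0,1]\times\overline{Q_{R''}^d}$ (by continuity and positivity on $Q_R^d$), which is what guarantees $X_t=Y/\tilde\rho_t$ is jointly smooth with uniformly compact support; you gestured at this but the uniform lower bound is the precise reason the flow is well-behaved.
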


\begin{proof}
For $\ell \in \{0,1\}$, we define smooth positive functions $I_j^{\ell}, h_j^{\ell}: Q_R^{j} \to (0,\infty)$ by
$$I_d^{\ell}=\rho_{\ell}^2 \qquad \text{ and } \qquad 
I_j^{\ell}(x_1,\dots,x_j)=\int_{Q_R^{d-j}} \rho_{\ell}^2(x_1,\dots,x_j,y) dy \quad \text{ for } j\in \{1,\dots,d-1\},$$
$$h_1^{\ell}(x_1)=I_1^{\ell}(x_1) \qquad \text{ and } \qquad
h_j^{\ell}(x_1,\dots,x_{j})=\frac{I_{j}^{\ell}(x_1,\dots,x_j)}{I_{j-1}^{\ell}(x_1,\dots,x_{j-1})} \quad \text{ for } j\in\{2,\dots d\}.$$
Then, for every  $\ell \in \{0,1\}$, $j \in \{ 1,\dots d \}$ and 
$(x_1,\dots,x_j) \in Q_{R}^{j}$
\begin{equation} \label{prop:hj}
    h_1^{\ell}(x_1)\dots h_j^{\ell}(x_1,\dots,x_j)= I_{j}^{\ell}(x_1,\dots,x_j)
\qquad \text{ and } \qquad
\int_{-R}^{R} h_j^{\ell}(x_1,\dots,x_{j-1},\sigma) d\sigma = 1.
\end{equation}
The map $P_{\ell}:Q_R^d \rightarrow Q_R^d$ defined by
$$P_{\ell}(x_1,\dots,x_d)=\left(
-R+2R \int_{-R}^{x_1} h_1 (\sigma)d\sigma,
%-R+2R \int_{-R}^{x_2} h_2(x_1,\sigma) d\sigma, 
\dots,
-R+2R \int_{-R}^{x_d} h_d(x_1,\dots,x_{d-1},\sigma)d\sigma \right)$$
is a  $C^{\infty}$-diffeormphism of $Q_R^d$ isotopic to identity, for the same reasons as in the previous proof. 
The assumption $\rho_0=\rho_1$ on $Q_R^d \setminus Q_{R'}^d$
implies $I_j^0=I_j^1$ and $h_j^{0}=h_j^{1}$ on $Q_R^j \setminus Q_{R'}^j$ thus
$P_0=P_1$ on $Q_R^d \setminus Q_{R'}^d$.
We conclude with $P(x)=P_0^{-1} \circ P_1(x)$ for $x \in Q_R^d$ and $P(x)=x$ for $x \in \R^d \setminus Q_R^d$.
\end{proof}

%--------------------------------------------------
\subsection{Some density results}

\begin{lemma} \label{Lem:dense_tore}
    The set 
    $\frak{D}_1(\T^d):=\{ \rho e^{i \phi} ; 
    \phi \in L^{2}(\T^d,\R),
    \rho \in C^{\infty}(\T^d,(0,\infty)), 
    \|\rho\|_{L^2}=1 
    \}$
    is dense in $\mathcal{S}$ for the $\|.\|_{L^2}$-topology.
    Thus $\frak{D}(\T^d):=\frak{D}_1(\T^d) \times \frak{D}_1(\T^d)$ is dense in 
    $\mathcal{S} \times \mathcal{S}$ for the $\|.\|_{L^2}$-topology.
\end{lemma}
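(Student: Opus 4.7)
The plan is to decompose $\psi \in \mathcal{S}$ into modulus and phase, approximate the modulus by a smooth strictly positive normalized density, and keep the phase essentially unchanged. Write $\psi = r e^{i\theta}$ with $r := |\psi| \in L^2(\T^d,[0,\infty))$ and $\theta : \T^d \to [0,2\pi)$ any measurable argument of $\psi$ (setting, e.g., $\theta = 0$ on $\{\psi=0\}$). Since $\T^d$ has finite measure, $\theta \in L^\infty(\T^d,\R) \subset L^2(\T^d,\R)$, which gives the right regularity for the phase factor in the definition of $\frak{D}_1(\T^d)$.

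Next, I would build a smooth, strictly positive approximation of $r$ as follows. Let $(\chi_n)_{n\in\N}$ be a standard family of smooth mollifiers on $\T^d$ and set $\tilde{r}_n := r * \chi_n + \tfrac{1}{n}$. Then $\tilde{r}_n \in C^\infty(\T^d,(0,\infty))$ (strictly positive because of the additive constant) and $\tilde{r}_n \to r$ in $L^2(\T^d)$ as $n \to \infty$, since $r * \chi_n \to r$ in $L^2$ and the constant $\tfrac{1}{n}$ tends to $0$ in $L^2(\T^d)$. After normalization $\rho_n := \tilde{r}_n / \|\tilde{r}_n\|_{L^2}$ we obtain $\rho_n \in C^\infty(\T^d,(0,\infty))$ with $\|\rho_n\|_{L^2}=1$, and $\rho_n \to r$ in $L^2$ because $\|\tilde{r}_n\|_{L^2} \to \|r\|_{L^2} = 1$.

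Setting $\psi_n := \rho_n e^{i\theta} \in \frak{D}_1(\T^d)$, we then estimate
\begin{equation*}
\|\psi_n - \psi\|_{L^2} = \|(\rho_n - r) e^{i\theta}\|_{L^2} = \|\rho_n - r\|_{L^2} \underset{n \to \infty}{\longrightarrow} 0,
\end{equation*}
which proves density of $\frak{D}_1(\T^d)$ in $\mathcal{S}$. The second statement is immediate: if $(\psi_0,\psi_1) \in \mathcal{S} \times \mathcal{S}$ and $(\psi_0^n)_n, (\psi_1^n)_n \subset \frak{D}_1(\T^d)$ approximate $\psi_0, \psi_1$ respectively in $L^2$, then $(\psi_0^n,\psi_1^n) \in \frak{D}(\T^d)$ approximates $(\psi_0,\psi_1)$ in the product topology.

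There is no real obstacle here; the only point that requires mild care is ensuring that the chosen measurable argument $\theta$ is genuinely an $L^2$ function, which is automatic because $\T^d$ is compact so any bounded measurable function is in $L^2$. The same argument (with mollification on compact subsets and a minor adjustment to preserve the support condition appearing in Proposition \ref{Prop:Moser_Rd}) would be needed to establish the analogous density statement on $\R^d$, but here we only treat the torus.
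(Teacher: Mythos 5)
Your proof is correct and follows essentially the same approach as the paper: decompose $\psi$ into modulus and phase, approximate the modulus by a smooth strictly positive normalized function via mollification plus renormalization, and use compactness of $\T^d$ to guarantee that the bounded measurable phase lies in $L^2(\T^d,\R)$. The only cosmetic difference is that you achieve strict positivity by adding the constant $1/n$ after mollifying, while the paper first truncates $\rho$ from below by $\varepsilon$ and then mollifies; both routes are valid and lead to the same conclusion.
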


\begin{proof}
Let $\psi \in \mathcal{S}$.
Then $\rho:=|\psi| \in L^2(\T^d, [0,\infty))$ and $\|\rho\|_{L^2}=1$.
For $\epsilon>0$, we have $\max\{\epsilon,\rho\} \in L^2(\T^d,(0,\infty))$ and $\|\rho-\max\{\epsilon,\rho\}\|_{L^2} \leq \epsilon (2\pi)^d$.
By considering a convolution product 
of $\max\{\epsilon,\rho\}$
with a non negative kernel, 
and an appropriate renormalization,
we obtain 
$\rho^{\epsilon} \in C^{\infty}(\T^d,(0,\infty))$ such that 
$\|\rho^{\epsilon}\|_{L^2}=1$ and
$\|\rho-\rho^{\epsilon}\|_{L^2} \to 0$ as $\epsilon \to 0$.

The function $\text{arg}(\psi)$ is well defined on the set $\{ \rho>0\}$. Let $\phi$ be its extension by $0$ on $\T^d$, then $\phi$ is a measurable function on $\T^d$ with values in $[-\pi,\pi)$ thus 
$\phi \in L^2(\T^d)$. Finally
$\| \psi - \rho^{\epsilon} e^{i \phi} \|_{L^2} =
\| \rho - \rho^{\epsilon}  \|_{L^2} \to 0$ as $\epsilon \to 0$.    
\end{proof}

\begin{lemma} \label{Lem:dense_line}
The set $\frak{D}(\R^d)$, defined below, is dense in $\mathcal{S} \times \mathcal{S}$ for the $\|.\|_{L^2}$-topology,
$$\frak{D}(\R^d):=\{ (\rho_0 e^{i\phi_0},\rho_1 e^{i \phi_1});
\phi_j \in L^2(\R^d,\R),
\rho_j \in C^{\infty}(\R^d,[0,\infty)),
\exists 0<R'<R \text{ s.t. } 
(\ref{hyp_Moser_line}) \text{ holds}
\}.$$
\end{lemma}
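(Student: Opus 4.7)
The plan is to adapt the argument of Lemma \ref{Lem:dense_tore} to $\R^d$, with the extra complication that the two moduli $\rho_0$ and $\rho_1$ must coincide outside some cube $Q_{R'}^d$. My idea is to write each $\rho_j$ as the sum of a $j$-dependent smooth compactly supported approximation of $|\psi_j|$ plus a \emph{common} smooth positive ``pedestal'' $h$ supported in $\overline{Q_R^d}$. The pedestal simultaneously forces strict positivity on the open cube $Q_R^d$ and, being independent of $j$, enforces the matching condition $\rho_0=\rho_1$ outside a smaller cube.

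Fix $(\psi_0,\psi_1)\in \mathcal{S}\times\mathcal{S}$ and $\epsilon>0$. First choose $R_0>0$ large enough that $\int_{\R^d\setminus Q_{R_0}^d}|\psi_j|^2\,dx<\epsilon^2/16$ for $j=0,1$, and set $R':=R_0+1$, $R:=R_0+2$. Next, fix once and for all an auxiliary function $h\in C^\infty(\R^d,[0,\infty))$ supported in $\overline{Q_R^d}$, strictly positive on $Q_R^d$, and rescaled so that $\|h\|_{L^2(\R^d)}<\epsilon/4$ (for instance, a tensor product of a one-dimensional smooth bump, suitably normalized). For each $j\in\{0,1\}$, I construct $\tilde\rho_j\in C^\infty(\R^d,[0,\infty))$ with $\mathrm{supp}(\tilde\rho_j)\subset\overline{Q_{R_0}^d}$ and $\|\tilde\rho_j-|\psi_j|\|_{L^2}<\epsilon/4$, by the standard procedure: multiply $|\psi_j|$ by a smooth cutoff vanishing outside a cube slightly smaller than $Q_{R_0}^d$, and convolve with a mollifier $\eta_\sigma$ of sufficiently small width.

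I then define $\rho_j:=c_j\tilde\rho_j+h$ where $c_j>0$ is chosen so that $\|\rho_j\|_{L^2}=1$; this is the unique positive root of the quadratic $c^2\|\tilde\rho_j\|^2+2c\langle\tilde\rho_j,h\rangle+\|h\|^2=1$, and it is close to $1$ since $\|\tilde\rho_j\|_{L^2}$ is close to $\||\psi_j|\|_{L^2}=1$ and $\|h\|_{L^2}$ is small. By construction, $\rho_j\in C^\infty(\R^d,[0,\infty))$, $\mathrm{supp}(\rho_j)\subset\overline{Q_R^d}$, $\rho_j>0$ on $Q_R^d$ (because $h>0$ there), $\int_{Q_R^d}\rho_j^2=\|\rho_j\|_{L^2}^2=1$, and crucially $\rho_0=\rho_1=h$ on $\R^d\setminus Q_{R'}^d$ because $\mathrm{supp}(\tilde\rho_j)\subset\overline{Q_{R_0}^d}\subset Q_{R'}^d$; hence condition \eqref{hyp_Moser_line} is satisfied.

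For the phases, set $\phi_j(x):=\arg(\psi_j(x))\,\mathbf{1}_{\{|\psi_j|>0\}}(x)\,\mathbf{1}_{Q_R^d}(x)$, which is bounded by $\pi$ and compactly supported, hence belongs to $L^2(\R^d,\R)$. A direct pointwise computation, distinguishing the three cases $x\in Q_R^d\cap\{\psi_j\neq 0\}$, $x\in Q_R^d\cap\{\psi_j=0\}$ and $x\notin Q_R^d$, shows that $|\rho_je^{i\phi_j}-\psi_j|(x)=|\rho_j(x)-|\psi_j(x)||$ for almost every $x$, whence
$$\|\rho_je^{i\phi_j}-\psi_j\|_{L^2}=\|\rho_j-|\psi_j|\|_{L^2}\leq|c_j-1|\,\|\tilde\rho_j\|_{L^2}+\|\tilde\rho_j-|\psi_j|\|_{L^2}+\|h\|_{L^2}<\epsilon,$$
concluding the proof. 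The only genuinely delicate point is that the $L^2$-normalization step must preserve the matching condition: this is why the correction factor $c_j$ is applied only to the compactly supported part $\tilde\rho_j$, leaving the common pedestal $h$ untouched.
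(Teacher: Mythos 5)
Your proof is correct, and it achieves the same high-level goal as the paper's argument (smooth out the moduli, add a common pedestal supported near the cube boundary, normalize, then extract a bounded compactly supported phase). The essential technical difference lies in the normalization device, and your variant is genuinely cleaner. The paper writes $\rho_j$ as $(f_j^{\epsilon}+\delta\alpha_j\chi)/\|f_j^{\epsilon}+\delta\alpha_j\chi\|_{L^2}$ with $\alpha_0=1$ and then must solve the quadratic in $\alpha_1=\alpha_{\epsilon,\delta}$ coming from the requirement that the rescaled pedestals agree on $Q_R^d\setminus Q_{R'}^d$; this couples the two normalizations. You instead keep the pedestal $h$ fixed and apply the normalizing scalar $c_j$ only to the compactly supported part $\tilde\rho_j$, so the matching condition $\rho_0=\rho_1=h$ on $\R^d\setminus Q_{R'}^d$ holds by construction with no coupled equation to solve; the only quadratic you solve is a separate, decoupled one per $j$. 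This is a nice simplification.

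One small point you left implicit and should make explicit for completeness: the quantitative closeness of $c_j$ to $1$. Since $\|c_j\tilde\rho_j+h\|_{L^2}=1$, the triangle inequality gives $|c_j\|\tilde\rho_j\|_{L^2}-1|\leq\|h\|_{L^2}<\epsilon/4$, and combined with $|\|\tilde\rho_j\|_{L^2}-1|\leq\|\tilde\rho_j-|\psi_j|\|_{L^2}<\epsilon/4$, you get $|c_j-1|\,\|\tilde\rho_j\|_{L^2}<\epsilon/2$, so the final estimate closes with total error $<\epsilon$. With this inserted the proof is complete. Note also that you implicitly need $\epsilon<4$ to guarantee $\|\tilde\rho_j\|_{L^2}>0$ so $c_j$ is well-defined, a harmless restriction.
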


\begin{proof}
Let $(\psi_0,\psi_1)\in \mathcal{S} \times \mathcal{S}$. 
One may assume these functions are compactly supported: there exist $R'>0$ such that $\text{Supp}(\psi_j) \subset Q_{R'}^d$.
Then $f_j:=|\psi_j| \in L^2(\R^d,[0,\infty))$ is supported in $Q_{R'}^d$. By considering a convolution product of $\max\{ f_j,\epsilon\} 1_{Q_{R'}^d}$ with a non negative kernel, 
and an appropriate renormalization,
we obtain $f_j^{\epsilon} \in C^{\infty}_c(\R^d,[0,\infty))$
such that
$\text{Supp}(f_j^{\epsilon}) = \overline{Q_{R'+\epsilon}^{d}}$, 
$\|f_j^{\epsilon}\|_{L^2}=1$,
$f_j^{\epsilon}>0$ on $Q_{R'+\epsilon}$ and
$\|f_j-f_j^{\epsilon}\|_{L^2} \to 0$ when $\epsilon \to 0$.
Let $R:=R'+1$ and
$\chi \in C^{\infty}_c(\R^d,[0,\infty)$ such that
$\text{Supp}(\chi)=\overline{Q_R^d} \setminus Q_{R'}^d$ and
$\chi>0$ on $Q_{R}^d \setminus \overline{Q_{R'}^d}$.
For $\epsilon, \delta>0$ small enough, there exists $\alpha_{\epsilon,\delta}>0$ such that
\begin{equation} \label{eq:a}
\frac{1}{\| f_0^{\epsilon}+\delta \chi\|_{L^2}} = 
\frac{\alpha_{\epsilon,\delta}}{\| f_1^{\epsilon}+\delta \alpha_{\epsilon,\delta} \chi\|_{L^2}}
\end{equation}
because this equation can be re-formulated as 
$a_{\epsilon,\delta} \alpha^2 + 2b_{\epsilon,\delta} \alpha + c_{\epsilon,\delta} =0$ where
$$a_{\epsilon,\delta}:=\|f_0^{\epsilon}+\delta \chi\|_{L^2}^2-\delta^2 \|\chi\|_{L^2}^2,
\qquad 
b_{\epsilon,\delta}:= -2\delta \int_{Q_{R}^d} f_1^{\epsilon} \chi,
\qquad
c_{\epsilon,\delta}:= - 1$$
thus
$\Delta_{\epsilon,\delta}:=b_{\epsilon,\delta}^2-4a_{\epsilon,\delta}c_{\epsilon,\delta} \to 4$ 
as $(\epsilon,\delta) \to 0$
and
$\alpha_{\epsilon,\delta}:=(-b_{\epsilon,\delta}+\sqrt{\Delta_{\epsilon,\delta}})/2 a_{\epsilon,\delta}$ gives the conclusion.
For $\epsilon, \delta>0$ small enough, we define
$$\rho_0^{\epsilon,\delta}(x):=\frac{f_0^{\epsilon}(x)+\delta \chi(x)}{\| f_0^{\epsilon}+\delta \chi\|_{L^2} }
\quad
\text{ and }
\quad
\rho_1^{\epsilon}(x):=\frac{f_1^{\epsilon}(x)+ \delta \alpha_{\epsilon,\delta}  \chi(x)}{\| f_1^{\epsilon}+\delta  \alpha_{\epsilon,\delta} \chi\|_{L^2} }.$$
Then $\rho_j \in C^{\infty}_c(\R^d,[0,\infty))$,
$\rho_j>0$ on $Q_R^d$, 
$\|\rho_j\|_{L^2(Q_R^d)}=1$
and (\ref{eq:a}) ensures that
$\rho_0=\rho_1$ on $Q_R^d \setminus Q_{R'+\epsilon}$.
Moreover, $\|f_j-\rho_j^{\epsilon,\delta}\|_{L^2} \to 0$ as $(\epsilon,\delta)\to 0$.

The function $\text{arg}(\psi_j)$ is well defined on the set $\{ |\psi_j|>0\} \subset Q_R^d$. Let $\phi_j$ be its extension by $0$ on $\R^d$, then $\phi_j$ is a measurable function on $\R^d$ with values in $[-\pi,\pi)$ and compactly supported thus 
$\phi_j \in L^2(\R^d)$. Finally
$\| \psi_j - \rho_j^{\epsilon} e^{i \phi_j} \|_{L^2}  \to 0$ as $\epsilon \to 0$. 
\end{proof}

\medskip

\textbf{Acknowledgments.}
The authors would like to thank Mario Sigalotti for stimulating discussions.

 Karine Beauchard acknowledges support from grants ANR-20-CE40-0009 (Project TRECOS) and ANR-11-LABX-0020 (Labex Lebesgue), as well as from the Fondation Simone et Cino Del Duca -- Institut de France.

Eugenio Pozzoli thanks the SMAI for supporting and the CIRM for hosting the BOUM project "Small-time controllability of Liouville transport equations along an Hamiltonian field", where some ideas of this work were conceived.

This research has been funded in whole or in part by the French National Research Agency (ANR) as part of the QuBiCCS project "ANR-24-CE40-3008-01".

This project has received financial support from the CNRS through the MITI interdisciplinary programs.

\bibliographystyle{siamplain}
\bibliography{references}

%%%%%%%%%%%%%%%%%%%%%%%%%%%%%
\end{document}